\numberwithin{equation}{section}
\theoremstyle{plain}
\newtheorem{theorem}[equation]{Theorem}
\newtheorem{lemma}[equation]{Lemma}
\newtheorem{corollary}[equation]{Corollary}
\theoremstyle{remark}
\newtheorem{remark}[equation]{Remark}
\theoremstyle{definition}
\newtheorem{definition}[equation]{Definition}
\newtheorem{defn}[equation]{Definition}
\newtheorem{question}[equation]{Question}
\newtheorem*{question*}{Question}
\newtheorem{example}[equation]{Example}
\def\vint_#1{\mathchoice%
        {\mathop{\kern 0.2em\vrule width 0.6em height 0.69678ex depth -0.58065ex
                \kern -0.8em \intop}\nolimits_{\kern -0.4em#1}}%
        {\mathop{\kern 0.1em\vrule width 0.5em height 0.69678ex depth -0.60387ex
                \kern -0.6em \intop}\nolimits_{#1}}%
        {\mathop{\kern 0.1em\vrule width 0.5em height 0.69678ex
            depth -0.60387ex
                \kern -0.6em \intop}\nolimits_{#1}}%
        {\mathop{\kern 0.1em\vrule width 0.5em height 0.69678ex depth -0.60387ex
                \kern -0.6em \intop}\nolimits_{#1}}}
\def\vintslides_#1{\mathchoice%
        {\mathop{\kern 0.1em\vrule width 0.5em height 0.697ex depth -0.581ex
                \kern -0.6em \intop}\nolimits_{\kern -0.4em#1}}%
        {\mathop{\kern 0.1em\vrule width 0.3em height 0.697ex depth -0.604ex
                \kern -0.4em \intop}\nolimits_{#1}}%
        {\mathop{\kern 0.1em\vrule width 0.3em height 0.697ex depth -0.604ex
                \kern -0.4em \intop}\nolimits_{#1}}%
        {\mathop{\kern 0.1em\vrule width 0.3em height 0.697ex depth -0.604ex
                \kern -0.4em \intop}\nolimits_{#1}}}
\newcommand{\fint}{\avint}
\title[Almost uniform domains]{Almost uniform domains and Poincar\'e inequalities}
\date{\today}
\author{Sylvester Eriksson-Bique}
\thanks{S.E.-B.~was partially supported by
NSF grant DMS\#--1704215 (U.S.).}
\address{S.E--B.: Research Unit of Mathematical Sciences,
P.O.Box 3000,
FI-90014, Oulu, Finland}
\email{\tt sylvester.eriksson-bique@oulu.fi}
\author{Jasun Gong}
\address{J.G.: Mathematics Department, Fordham University, John Mulcahy Hall Bronx, NY 10458-5165}
\email{jgong7@fordham.edu}
\subjclass[2010]{26A45,30L99 (28A75,28A80,31E05)}
\keywords{Sierpi\'nski carpet, uniform domain, self-improvement, Poincar\'e inequality}
\newcounter{prob}
\newcommand{\Z}{\ensuremath{\mathbb{Z}}}
\newcommand{\E}{\ensuremath{\mathbb{E}}}
\newcommand{\N}{\ensuremath{\mathbb{N}}}
\newcommand{\R}{\ensuremath{\mathbb{R}}}
\newcommand{\nseq}{\ensuremath{{\bf n}}}
\newcommand{\diam}{\operatornamewithlimits{ \rm diam}}
\newcommand{\Lip}{\ensuremath{\mathrm{Lip\ }}}
\newcommand{\defeq}{\mathrel{\mathop:}=}
\newcommand{\nC}{\ensuremath{\overline{C}}}
\newcommand{\nD}{\ensuremath{\overline{D}}}
\newcommand{\mT}{\ensuremath{\mathcal{T}}}
\newcommand{\mR}{\ensuremath{\mathcal{R}}}
\newcommand{\nmR}{\ensuremath{\overline{\mathcal{R}}}}
\newcommand{\ngam}{\ensuremath{\overline{\gamma}}}
\newcommand{\len}{\ensuremath{\mathrm{Len}}}
\newcommand{\undf}{\ensuremath{\mathrm{Undef}}}
\newcommand{\gap}{\ensuremath{\mathrm{Gap}}}
\def\Xint#1{\mathchoice
{\XXint\displaystyle\textstyle{#1}}%
{\XXint\textstyle\scriptstyle{#1}}%
{\XXint\scriptstyle\scriptscriptstyle{#1}}%
{\XXint\scriptscriptstyle\scriptscriptstyle{#1}}%
\!\int}
\def\XXint#1#2#3{{\setbox0=\hbox{$#1{#2#3}{\int}$ }
\vcenter{\hbox{$#2#3$ }}\kern-.58\wd0}}
\def\avint{\Xint-}
\newcommand{\co}{\mskip0.5mu\colon\thinspace}   % Colon for maps.
\begin{document}

\maketitle

%
%Contributions:
% - MacKay-Tyson-Wildrick in higher generality
% - Synthetic and general proof
% - New characterization of PI and exponent with a sufficiently good estimate at a single level
% - Large families of planar PI and Loewner carpets. General sufficient conditions.
% - New way of proving 1,1-PI via 
%metry of ``substantial sets'' and estimates at given levels.
% - Sharp characterization and proof that works in all dimensions for square carpets/ cube sponges. New examples of spaces with $(1,1)-PI$
% - Heisenberg examples of subsets with empty interior. New geometries for PI-spaces.
% - Metric conditions, and notions of ``Metric PI-sponges''
% - Tools of independent interest, such as iteration, isoperimetry, building curves via iteration, Poincar\'e doesn't see small sets.

%\begin{note}[\underline{Jasun, 2019 March 18}]
%In {\tt\small typewriter font} below, I rearranged elements of the previous abstract into a new version.
%\end{note}

\begin{abstract}
Here we show existence of %numerous 
{%\color{blue}
many
subsets of Euclidean%and metric 
}
spaces that, despite having empty interior, still support Poincar\'e inequalities
{%\color{blue}
with respect to the restricted Lebesgue measure.%
}  
Most importantly, 
{%\color{blue}
despite the explicit constructions in our proofs,%
}
our methods do not depend on any rectilinear or self-similar structure of the underlying space.  We instead employ the %notion of uniform domains, %of after
uniform domain condition of
Martio and Sarvas. %Our 
This
condition relies on the measure density of such subsets, as well as the regularity and relative separation of their boundary components.  

In doing so, our results hold true for metric spaces equipped with doubling measures and Poincar\'e inequalities in general, and for the Heisenberg groups in particular. To our knowledge, these are the first examples of such subsets on any %step-$2$ 
(non-abelian)
Carnot group.
Such subsets also give %, in general, 
new examples of Sobolev extension domains, %on 
also in the general setting of 
doubling metric measure spaces.

{%\color{blue}
In the Euclidean case, our construction also %covers 
includes
the non-self-similar Sierpi\'nski carpets of Mackay, Tyson, and Wildrick, as well as higher dimensional analogues not treated in the literature.%The analysis of the Poincar\'e inequality with exponent $p=1$, for these carpets and their higher dimensional analogues, includes a new way of proving an isoperimetric inequality on a space without constructing Semmes families of curves.
}
When specialized to the plane, %these 
our
results lead to 
{%\color{blue}
new,%
}
general sufficient conditions for a planar subset to be 2-Ahlfors regular and to satisfy the Loewner condition. Two of these conditions, uniform separation and regularity of boundary components, are also necessary. The sufficiency is obtained with an additional measure density condition.

\end{abstract}

\tableofcontents

\section{Introduction}

\subsection{Poincar\'e inequalities and Sierpi\'nski sponges}
%{Results}

{%\color{blue}
Let %$(X,d,\mu)$
$(X,d)$
be a complete metric space that supports a 
{%\color{red}
doubling%
}
measure 
$\mu$.
%and a $(1,p)$-Poincar\'e inequality. 
We wish to understand the following question:\ 

\begin{itemize}
\item[]
\textit{
If $X$ supports 
a $(1,p)$-Poincar\'e inequality, 
then
when does a subset $Y$ of $X$, equipped with its restricted measure and metric, %distance, 
support a $(1,q)$-Poincar\'e inequality, and for which exponents $q\in [1,\infty)$?
}
\end{itemize}

\noindent
This question is motivated by the desire to construct %new examples that generalize 
a new, general class of examples that include
so-called uniform domains %, including 
and more generally,
Sobolev extension domains. Below, our main results will give %conditions in 
criteria to guarantee such examples, in
both the Euclidean and the general metric space setting. % for this to hold. First, we %must recall
To this end, we begin with 
some definitions.

\begin{definition} \label{def:measuredoubling}
Let $r_0 > 0$. A proper metric measure space $(X,d,\mu)$ with a Radon measure $\mu$ is said to be
%\sout{\sc locally $D$-doubling}
{\sc $D$-doubling at scale $r_0$} --- or {\sc $(D,r_0)$-doubling} for short ---
if for
all $r \in (0,r_0)$ and any $x \in X$ we have
$$
0 < \mu(B(x,2r)) \leq D \mu(B(x,r)).
%\frac{\mu(B(x,2r))}{\mu(B(x,r))} \leq D.
$$
If $(X,d,\mu)$ is $D$-doubling at scale $r_0$ for all $r_0 > 0$, then $X$ is said to be {\sc $D$-doubling}.
\end{definition}

We will assume that the support of the measure equals the space, ${\rm supp}(\mu)=X$.

\begin{definition} \label{def:PI}
Let $r_0>0$ and $1 \leq p  < \infty$. A proper metric measure space $(X,d,\mu)$ with a Radon measure $\mu$ 
%and $\text{supp}(\mu)=X$ 
is said to satisfy a %\sout{local}
{\sc $(1,p)$-Poincar\'e inequality at scale $r_0$}
(with constant $C \geq 1$) %for $1\leq p < \infty$,
 if for all Lipschitz functions $f:\ X \to \mathbb{R}$ and all $x \in X$ and $r \in (0,r_0)$ we have for $B \defeq B(x,r)$ and $CB\defeq B(x,Cr)$
\begin{equation}\label{eq:defPI}
\fint_B |f-f_B| ~d\mu \leq C r  \left(\fint_{C B} \Lip [f]^p ~d\mu \right)^{\frac{1}{p}}.
\end{equation}
If $r_0 = \infty$, then say that $X$ %is said to 
satisfies a {\sc (global) $(1,p)$-Poincar\'e inequality} (with the same constants).
\end{definition}

A space satisfying a Poincar\'e inequality and the doubling property is called a \emph{PI-space}. 
% , that a space is said to be (measure doubling) if,  and is said to satisfy a Poincar\'e inequality if...

%On a metric measure space $(X,d,\mu)$ we denote for a function 
Here, for any measurable and locally integrable $f \co X \to \R$ its average value on a ball is
$$
f_B \defeq \fint_B f ~ d\mu \defeq \frac{1}{\mu(B)} \int_B f ~d\mu,$$
and its pointwise Lipschitz constant is
$$
\Lip [f](x) \defeq \limsup_{y \to x} \frac{|f(x)-f(y)|}{d(x,y)}.
$$
%When there is no danger of confusion, we suppress the dependence of the constant $C$ in the above definition.
%On the other hand, we will sometimes write $C=C_{PI}$ to clarify the role of this constant (vs.\ constants from other definitions). 
In the literature, there are different definitions of Poincar\'e inequalities, all of which coincide with our definition in the case of complete metric spaces. For a detailed discussion of these issues we refer to \cite{keith2003modulus, hajlasz1995sobolev, Heinonen2000}.

Poincar\'e inequalities play a profound role in analysis and the regularity of functions.  In the general setting of metric measure spaces,
%{\footnotesize \sout{
%the study of geometry in metric spaces.  Perhaps most notably since}} 
they are crucial hypotheses for
%{\footnotesize \sout{connected to quasiconformal mappings \cite{heinonen1998quasiconformal}, }}
nontrivial definitions of generalized Sobolev spaces \cite{hajlaszmetricsobolev, ChDiff99, shanmugalingamsobolev} and differentiability of Lipschitz functions \cite{ChDiff99}. 
Moreover, open subsets $\Omega \subset X$ supporting a $(1,p)$-Poincar\'e inequality and with a %measure density 
lower bound 
on their measure density
are important examples of sets admitting extensions of Sobolev spaces. See \cite{jonesuniform,koskelaextension,bjornuniform} %as well as below, 
and below
for more related historical discussion and references. We remark, that applying our work there requires some care, as our constructions lead to closed sets. However, one can also consider Sobolev extension problems with other gradients which make sense also for closed sets. 
%{\footnotesize \sout{$N^{1,p}$. Initially sets admitting extensions of $W^{1,p}$ were given by Jones in \cite{jonesuniform}. If $\Omega$ is open, then the Newtonian Sobolev space $N^{1,p}(\Omega)$ coincides with the classical Sobolev space $W^{1,p}$. In that case, a sufficient condition to be an extension domain is if $\Omega$ supports a $(1,q)$-Poincar\'e inequality for $q<p$, or that the closure of $\Omega$ supports a $(1,p)$-Poincar\'e inequality (in light of the self-improvement results of Keith and Zhong \cite{keith2008poincare}). This condition, however is not necessary unless $p$ is sufficiently large, as discussed in \cite{bjornuniform}. See \cite{bjornuniform}, \cite{koskelaextension} for more related discussion and references.}}

%On the other hand, 
Poincar\'e inequalities also play a profound role in the study of geometry of metric spaces, specifically in regards to quasiconformal mappings between them \cite{heinonen1998quasiconformal}.
%{\footnotesize \sout{Ahlfors regular }}
Planar metric spaces that are Ahlfors $2$-regular and that support a $(1,2)$-Poincar\'e inequality are examples of sets which admit uniformization by slit carpets, see \cite[Section \S 7]{mackaytysonwildrick}. Such inequalities are also important in determining the so-called \textit{conformal dimension} of a space \cite{mackaytysonbook}.  In general, conformal dimension measures the extent to which Hausdorff dimension can be lowered by quasisymmetric maps, and %so a Poincar\'e inequality prevents lowering of the Hausdorff dimension.  
it is known that
any Ahlfors regular space satisfying a Poincar\'e inequality has conformal dimension equal to its Hausdorff dimension.

However, a 
good understanding of 
the geometric conditions that guarantee such inequalities, in particular for subsets, has remained a challenge.
%The question of classifying, or giving general sufficient conditions, for $Y \subset X$ to satisfy Poincar\'e inequalities, has previously been  only studied in cases where $Y$ has either a very controlled structure, or when $Y = \overline{\Omega}$ and $\Omega$ is a domain with sufficiently nice boundary. 
Particular examples of subsets in the plane satisfying Poincar\'e inequalities were given by Mackay, Tyson and Wildrick \cite{mackaytysonwildrick}. We briefly %describe their 
discuss a construction %in the general case here.
here that includes theirs.

Let ${\bf n} = (n_i)_{i=1}^\infty$ be a %fixed 
sequence of odd positive integers with $n_i \geq 3$. %and as 
As
a convention, put
$$
{\bf n}^{-1} ~=~ \bigg( \frac{1}{n_i} \bigg)_{i=1}^\infty.
$$
Fix a dimension $d \geq 2$.  We define the {\sc Sierpi\'nski sponge associated to $\nseq$ in $\R^d$} as follows:\

\begin{enumerate}
\item
At the first stage,
put
$S_{0,\nseq}=[0,1]^d$ and 
$T_{0,\nseq}^1 = [0,1]^d$ and %define
%the collection %at the first stage as 
$\mT_{0,\nseq}=\{T_{0,\nseq}^1\}$.

\item
Assuming that we have defined
sets $S_{k,\nseq}$ and $T_{k,\nseq}^j$
and collections of cubes
$\mT_{k,\nseq}$ at the $k$th stage, %define the ($k+1$)th stage versions as follows.
for $k \in \N$,

\begin{itemize}
\item
Subdivide each $T \in \mT_{k,\nseq}$
into $(n_{k+1})^d$ equal subcubes; %and exclude the central one. 
\item
Excluding the central subcube in $T$, 
index the remaining subcubes in any fashion as $T_{k+1, \nseq}^j$
and let $\mT_{k+1, \nseq} = \{T_{k+1,\nseq}^j\}$ be the %$k+1$'th stage 
collection of all %the 
%remaining
%cubes. %$T_{k+1,\nseq}^j$ that arise.
%\sout{in the previous stage.}
such subcubes.
{%\color{blue}
We note that for $k \in \N$,
%\sout{define}
the side length of each %cube $T \in \mathcal{T}_{k,\nseq}$ is 
subcube $T_{k,\nseq}^j$ is
therefore
$$
s_k = \prod_{i=1}^k \frac{1}{n_i}.
$$
%\sout{to be} {\edit is} 
%$k$'th level cube for $k \geq 1$. %\sout{and also,} 
(For consistency, let $s_0=1$.)
}\item %Finally, %\sout{let} 
%We call the set $S_{k+1, \nseq} = \bigcup_{T \in \mT_{k+1, \nseq}} T$ 
Define
the {\sc $k+1$'th order pre-sponge} 
%is defined 
as the set 
$$
S_{k+1, \nseq} = \bigcup_{T \in \mT_{k+1, \nseq}} T.
$$
\end{itemize}

\item
%For $k \geq 1$, 
For technical purposes later, let $k \geq 1$ and 
define $\mR_{\nseq, k}$ to be the %set 
sub-collection
of central cubes removed from cubes $T \in \mT_{k-1,\nseq}$ at the $k$'th stage %, for $k=1, \dots, \infty$. Also, define 
and put
$$
\nmR_{\nseq, k}=\bigcup_{l=1}^k \mR_{\nseq,l}.
$$
\end{enumerate}

The {\sc Sierpi\'nski sponge associated to the sequence $\nseq$} is then defined as

\begin{equation}
\label{eq:sierdef}
S_{\nseq} = \bigcap_{k \geq 0} S_{k,\nseq}.
\end{equation}

When $d=2$ we also refer to these sets as Sierpi\'nski carpets,  and the constant sequence $\nseq=(3,3,3\ldots)$ yields the usual ``middle-thirds'' Sierpi\'nski carpet, which is denoted by $S_3$.

The main result by Mackay, Tyson and Wildrick \cite{mackaytysonwildrick} states that Sierpi\'nski carpets with  positive 
Lebesgue
measure satisfy Poincar\'e inequalities. %The proof of \cite{mackaytysonwildrick} 
Their proof
was a \textit{tour de force} in 
constructing
so-called Semmes families of (rectifiable) curves and 
then applying
a characterization of Poincar\'e inequalities from Keith \cite{keith2003modulus}. (For precise definitions and a further discussion, see \cite{semmescurves}.) 

However, even slight variations of %the 
their
construction, such as removing a ``nearly central'' square instead of a central one, would require a new construction of a curve family
{%\color{blue}
with new, equally technical details to check.%
}
Our motivation was therefore to find 
{%\color{blue}
more general and robust%
}
methods that apply to \textit{all dimensions}, as well as to \textit{non-euclidean geometries} too. 

First of all, our methods lead to the following higher dimensional analogue of their result.

%This method extensively made use of the grid structure, and is difficult to generalize to higher dimensions, different geometries or when $R_i$ are not squares. In fact, even a variant of their construction, where the removed squares $R_i$ are not always central squares, but some other near-central squares, or some finite collection of pairwise disjoint ones, seems very technical to address with the techniques in their paper.  As a first step, our motivation initially was to prove the following higher dimensional analogue.

\begin{theorem}\label{thm:ndimSiercarpet} Let ${\nseq} = (n_i)$ be a sequence of odd integers with $n_i \geq 3$, and let $d \geq 2$. Then
 the following conditions are equivalent for
 the Sierpi\'nski sponge $S_{\nseq}$ in $\R^d$:\
%\sout{will satisfy the following properties, depending on the summability of the sequence $\nseq$.}

\begin{enumerate}
\item ${\bf n}^{-1} \in \ell^d(\N)$;
\item The space $(S_{\nseq}, |\cdot|, \lambda)$ satisfies a $(1,p)$-Poincar\'e inequality for all $p>1$;
\item The space $(S_{\nseq}, |\cdot|, \lambda)$ satisfies a $(1,p)$-Poincar\'e inequality for some $p>1$.
\end{enumerate}
In addition, we have the following %borderline 
{%\color{blue}
complementary%
}
case:
\begin{enumerate}
%\sout{if and only if $S_{\nseq}$ has positive measure}.
\item[(4)] If $S_{\nseq}$ has zero $\lambda$-measure, then there is no $D$-doubling measure $\mu$, for any $D \in [1,\infty)$, such that $(S_{\nseq}, |\cdot|,\mu)$ satisfies a $(1,p)$-Poincar\'e inequality with $p \in [1,\infty)$.
\end{enumerate}
\end{theorem}

{%\color{red}
The 
borderline
case of $p=1$ can also be fully characterized in terms of $\nseq$. The case of $d=2$ appeared before in \cite{mackaytysonwildrick}. %, as well as an earlier version of this paper for all $n$. 
The general borderline case for all $d\geq 2$ is presented in a separate paper by the authors \cite{sylvesterjasun},
%also 
%for all dimensions $d$.
and the approach involves substantially different %tools 
methods.}
%The case $p=1$ is included in Case (4) and involves separate considerations.  In Section \S\ref{sec:isoperim} we discuss these considerations, as well as a novel way of proving %isoperimetric 
%Poincar\'e 
%inequalities in the metric space setting.

A crucial aspect of %this 
our
theorem is the sharp characterization of the exponents $p$. In what follows, we also obtain  essentially sharp characterizations for the given ranges of exponents
{%\color{blue}
in more general Euclidean constructions,%
}
and even in the general metric space context!

\subsection{The planar Loewner problem} \label{subsect:introplanarloewner}

Motivated by this result, we consider general sets of the form $Y = %X
\R^d
\setminus \bigcup_{R \in \mathcal{R}} R$,
{%\color{blue}
for some countable collection $\mathcal{R}$ of %obstacles
open subsets
and study when $Y$ inherits 
a
Poincar\'e inequality.
(Bear in mind that the elements of $\mathcal{R}$ 
will still have good geometric properties, but
are not necessarily polyhedral, or even Lipschitz.)  

The case of $d=2$ is particularly interesting,
due to the connections with quasiconformal geometry.
In particular, for $d=2$ the conditions  given for $\mathcal{R}$ %we identify 
are not only sufficient, but also close to necessary.  This also gives a partial answer to the following question:

\begin{question}[``Planar Loewner problem''] \label{ques:planarLoewner}
Classify %the 
all
closed subsets of the plane which are Ahlfors $2$-regular and $2$-Loewner.
\end{question}

Though we will not explicitly define the Loewner condition here, we %note 
recall
that a closed Ahlfors $2$-regular subset is $2$-Loewner if and only if it satisfies a $(1,2)$-Poincar\'e inequality; for a more general definition and further discussion, see \cite{heinonen1998quasiconformal}.

%A particular variant of this question with some hope of a solution could be called the ``planar Loewner problem''. For us it is sufficient to note that a complete $2$-Ahlfors regular subset is $2$-Loewner if and only if it satisfies a $(1,2)$-Poincar\'e inequality; for a more general definition and further discussion see \cite{heinonen1998quasiconformal}.

Though natural to
pose,
this question hasn't been extensively studied in the literature.  Prior results exist only for some 
specific cases.
We now give a 
new, general, and sufficient 
condition for an affirmative answer to 
this
problem. To formulate it, consider collections of removed sets $\mathcal{R}$ and subcollections of sets that meet a given ball
$B(x,r)$, 
%denoted as
$$
\mathcal{R}(x,r) ~=~
\{ R \in \mathcal{R} : R \cap B(x,r) \neq \emptyset \},
$$
and 
consider further,
for $N \in \N$, an `$N$-fold density function' for $\mathcal{R}$ relative to balls:
\begin{align} 
%\notag
%\mathcal{R}(x,r) ~=&~
%\{ R \in \mathcal{R} : R \cap B(x,r) \neq \emptyset \}
%\\
\label{eq:densityfunction}
s_N(x,r) ~=&~
\inf\Big\{
\sum_{ R \in \mathcal{R}(x,r) \setminus I } \frac{\lambda(R)}{r^2} :
I \subset \mathcal{R}, |I| \leq N
\Big\},
\end{align}
where $\lambda(R)$ denotes the usual area, or Lebesgue measure, of $R$. 

\begin{theorem}\label{thm:planar} A closed subset $Y$ of $\R^2$ satisfies a $(1,p)$-Poincar\'e inequality for every $p \in (1,\infty)$ if
it is of the form
$$
Y = \Omega \setminus \bigcup_{R \in \mathcal{R}} R,
$$
where the following conditions hold for $\Omega$ and for $\mathcal{R}$, 
{%\color{blue}
for some constants $K \geq 1$ and $s > 0$:
}
\begin{enumerate}
\item The set $\Omega$ is closed, each $R \in \mathcal{R}$ is open, and each boundary $\partial \Omega$ and $\partial R$ for $R$ is %$R \in \mathcal{R}$ is 
a $K$-quasicircle;
\item $\mathcal{R}$ is uniformly relatively $s$-separated, that is:\ for all $R,R' \in \mathcal{R} \cup \{ \partial\Omega \}$,
$$
\Delta(R,R') \defeq 
\frac{d(R,R')}{\min(\diam(R),\diam(R'))} \geq s;
$$
\item 
There exists $N \in \N$ such that
$$
\limsup_{r \to 0} \sup_{x \in Y} s_N(x,r) = 0.
$$
%where $\epsilon = \epsilon_{2,K,\delta,N}$ is a universal positive constant and where
\end{enumerate}
\end{theorem}

%\begin{note}[Jasun, 2019-08-09]
%{\edit
%Compare with Theorem 4.53:\ make it consistent!
%}
%\end{note}

Indeed, 
Condition (3) requires the density of $\mathcal{R}$ at any $x \in Y$ to vanish, %--- that is, as as $r\to 0$ the relative density of $\mathcal{R}$ within $B(x,r)$ vanishes ---
%the function $s_N(x,r)$ measures the densities of $R$ in the ball $B(x,r)$, while 
but allowing 
at each scale $r$
for the $N$ largest ``obstacles''
in $\mathcal{R}$
to be excluded. %, at each scale $r$. 
A slightly stronger statement, which allows for the density only becoming sufficiently small, is given in Theorem \ref{thm:densitycarp}.

Theorem \ref{thm:planar} is new even when the collection of obstacles $R$ and $\Omega$ have simple geometry, 
such as when $\Omega$ and every $R$ are disks. %In
It is known from
\cite[Corollary 1.9]{mackaytysonwildrick} %it is shown 
that \textit{there exist} subsets of this form with empty interior and which satisfy a $(1,2)$-Poincar\'e inequality.  Such sets, called {\sc circle carpets}, are constructed 
implicitly
via uniformization and can therefore only be approximated numerically. In contrast, here we give a procedure that yields \textit{explicit} circle carpets satisfying Poincar\'e inequalities, with a sharp characterization of the range of exponents. This flexibility extends to other shapes and higher dimensions, as described in Corollary \ref{thm:euclideansponges} below. 

To reiterate, the conditions for the sets $R \in \mathcal{R}$ come in three forms: the regularity of their boundaries, their separation, and their density. The first two conditions in the statement are necessary for a subset to be Loewner, %; see 
as given in
Theorem \ref{thm:necessary} below. %In fact, if $X = \Omega \setminus \bigcup_{R \in \mathcal{R}} R$ is $2$-Loewner, then it satisfies the so called annular linear connectivity condition (see \cite{heinonen1998quasiconformal}), and from this one can conclude that the boundaries must be uniform quasicircles and uniformly relatively separated from the so called bounded turning condition (see \cite{tukiavaisala}). 
These conditions 
%are also 
also appear elsewhere in the literature;
for instance, 
they are
the relevant conditions in %Mario 
Bonk's work on uniformization of planar subsets \cite{bonkuniform}. %We remark, that 
Moreover,
the conditions on summability also bear close resemblance to the summability conditions arising in other work on uniformizing planar metric spaces \cite{merkenov,hakobyan}.

\subsection{Metric spaces and Carnot groups} \label{subsect:intrometriccarnot}

In the proof of Theorem \ref{thm:planar}, the 
most
crucial feature  %we use 
about the collection $\mathcal{R}$ %sets $R \in \mathcal{R}$ 
is that 
%is {\sc co-uniform}, i.e.\ that
$\R^2 \setminus R$ is a {\sc uniform} domain, 
for each $R \in \mathcal{R}$.
%with $\partial R$ connected.  
Such sets were first studied in \cite{martiosarvas,vaisala1988}; see Definition \ref{def:unifdom}. Roughly speaking, these correspond to domains $\Omega$ without ``outer cusps.'' Domains in Euclidean space with Lipschitz boundaries are uniform domains, for example, %and 
in all dimensions.

In fact, %the proofs of the previous theorems almost exclusively use 
uniformity
%, which 
is a purely metric  property. 
A crucial result of Bj\"orn and Shanmugalingam asserts that uniform domains $\Omega$ in a doubling metric measure space $X$ inherit a Poincar\'e inequality from $X$; see \cite{bjornuniform}. 
Motivated by this, 
we therefore formulate a more general %version of the previous 
theorem for metric spaces. 
%We have the following analogous result to Theorem \ref{thm:ndimSiercarpet}, but in the general context of Ahlfors-regular PI spaces instead of Euclidean ones.
%{\edit As we will see in Section \S, these provide entirely new examples.}

To this end, %a domain is called 
call a domain
{\sc co-uniform} if its complement is uniform and its boundary is connected. 
The uniform sparseness condition, %given 
mentioned 
below, combines %the conditions of uniform separation and density, which correspond to 
Conditions (2) and (3) in Theorem \ref{thm:planar} above;
for precise statements, see Definitions \ref{def:sparsecoll} and \ref{def:exteriorunif}. Note that 
the sequence 
$\mathbf{n}$ plays an analogous role as the 
one
in Theorem \ref{thm:ndimSiercarpet}, in that it handles the density of the omitted subsets.
%The definition of co-uniformity is essentially that $X \setminus R$ is uniform. 

\begin{theorem} \label{thm:metrisponge}
Let $X$ be an Ahlfors $Q$-regular complete metric measure space %with Ahlfors regularity constant $C_{AR}$ 
admitting a $(1,p)$-Poincar\'e inequality, and let $\nseq$ be
a sequence of positive integers with ${\nseq}^{-1} \in \ell^Q(\N)$.

If $\Omega$ is a bounded, $A$-uniform subset of $X$ and if $\{\mR_{\nseq,k}\}_{k=1}^\infty$ is a 
uniformly $\nseq-$sparse collection of 
co-uniform
subsets of $\Omega$, then 
the set
$$
S_{\nseq} = \Omega \setminus \bigcup_{k} \bigcup_{R \in \mR_{\nseq,k}} R,
$$
with its restricted measure and metric, is Ahlfors $Q$-regular and satisfies a $(1,q)$-Poincar\'e inequality for each $q>p$. 
Moreover, 
\begin{itemize}
\item 
if $p>1$, then it also satisfies a $(1,p)$-Poincar\'e inequality; \item %And 
if the union of all sets from $\mR_{ \nseq,k}$, over all $k\in \N$, is dense in $\Omega$, then $S_{\nseq}$ has empty interior.
\end{itemize}
\end{theorem}

%\begin{note}[Sylvester June 2019]
%Do we want to say ``uniformly $\mathbf{n}$-sparse''? It is maybe not clear what the role of the sequence is.
%\end{note}

The ranges of the exponents in %this theorem 
Theorem \ref{thm:metrisponge}
are sharp. In particular, %we see that 
only for $p=1$ do such removals of sets lead to a loss in range, namely the loss of the $(1,1)$-Poincar\'e inequality; see \cite{mackaytysonwildrick} for an example.
For $p>1$ no such loss occurs, %loss of a $(1,p)$-Poincar\'e inequality needs to occur, 
due to the seminal self-improvement result of Keith and Zhong \cite{keith2008poincare}.

For some spaces, such as the Heisenberg group in particular and step-2 Carnot groups in general, the existence of uniform domains is well-known, at all scales and locations within these spaces.
In such cases, Theorem \ref{thm:metrisponge} can be used to give new examples of subsets with Poincar\'e inequalities and empty interior; see Subsection \S\ref{sect:heisenberg} for these %results 
examples, 
as well as some of the definitions relevant to these geometries. Due to a recent result by T.\ Rajala \cite{rajalaunif}, it is likely that the result applies to any Carnot group.

\subsection{Sobolev extension domains}

As a corollary of our theorems, we obtain many new examples of Sobolev extension domains, both in Euclidean and non-Euclidean spaces. To wit, an open subset $\Omega \subset X$ is called a 
{%\color{blue} 
(Sobolev)%
}
extension domain if there exists a bounded extension operator $E : N^{1,p}(\Omega) \to N^{1,p}(X)$; %of the Sobolev spaces. %where if $\Omega \subset \R^n$ is open, then 
in the case where $\Omega$ is open in $X=\R^d$
the Newtonian Sobolev space $N^{1,p}(\Omega)$,
as introduced in 
\cite{shanmugalingamsobolev}, coincides with the classical Sobolev space $W^{1,p}(\Omega)$.  This definition, when employing $N^{1,p}(\Omega)$, %the Newtonian Sobolev space, 
makes sense even for closed subsets $\Omega$, while classically the interest has been mostly for open domains. However, the case of closed sets, as well as the relationship between open and closed extension domains is subtle.

The first examples of %sets admitting $W^{1,p}$-extensions 
extension domains
were given by Jones \cite{jonesuniform}. In general, a sufficient condition for $\Omega$ to be an extension domain is if $\Omega$ supports a $(1,q)$-Poincar\'e inequality for $q<p$.
%, or in light of the self-improvement results of Keith and Zhong \cite{keith2008poincare}, if the closure of $\Omega$ supports a $(1,p)$-Poincar\'e inequality. 
This condition, however, is not necessary unless $p$ is sufficiently large, as discussed in \cite{bjornuniform}. 

It remains a difficult problem to give both necessary and sufficient conditions for a domain to be an %Sobolev 
extension domain. In fact, this has essentially been solved only for simply connected domains in the plane \cite{zhangthesis}. Our examples give flexible constructions of infinitely connected domains in $\R^d$ for $d \geq 2$, 
as well as in
step-2 Carnot groups and in general metric spaces, that are Sobolev extension domains. These examples are new even in the planar setting. 
%One might ask, if there is a more direct proof of this fact, and if the techniques we employ may be useful to develop new geometric criteria for a multiply connected domain to be a Sobolev extension domain.
See \cite{bjornuniform,koskelaextension} for more related discussion and references, as well as the Ph.D.\ thesis \cite{zhangthesis}.

\subsection{Methodology:\ removing subsets vs.\ ``fillings'' of spaces}

Thus far, the results in this article apply to subsets $Y$ obtained by removing, from an initial set, infinite collections 
$\mR$
of well-behaved subsets at all locations and scales. %, such as the removed quasi-disks of Theorem \ref{thm:planar}, 
As we will see %in \S\ref{subsec:PIfilling} and 
later, these results are special cases of Theorem \ref{thm:PIthm} and Corollary \ref{cor:almostunifcon}, where such sets $Y$ are viewed from a different perspective.  In particular, we view the intermediate sets $\Omega_r$, 
each
obtained by removing 
a
finite sub-collection of %these subsets up
subsets in $\mR$ up 
to a given scale $r>0$, as good approximations (or ``fillings'') of $Y$; %in the case of $Y = S_\nseq$, for example, we would regard $\Omega_r = S_\nseq^k$ as such a filling for $r=s_k$.
in particular, each $\Omega_r$ is doubling and supports a Poincar\'e inequality, both at scale $r$, and $\Omega_r$ also contains $Y$ with small complement.
%will involve a reverse process which could be called ``filling.''
%We prove all of our results by general argument based on ``filling.'' 

In fact, these three properties \textit{alone} are sufficient for $Y$ to support a Poincar\'e inequality, provided that the associated constants %for these properties 
are uniform in $r$.  No explicit removals of sets are actually needed for our proofs; the fillings $\Omega_r$ only need to satisfy these properties axiomatically, and they need not be defined, a priori, in terms of any removed set.  Similarly as for Sobolev extension domains \cite{koskelaextension}, it is the \textit{measure density} of the sets $\Omega_r$ that is crucial. (In fact, the small-ness of $\Omega_r \setminus Y$ is given in terms of measure density; see Definition \ref{def:PIfilling}.)

The sufficiency of these properties in turn relies crucially on a new characterization of Poincar\'e inequalities, as studied by the first author \cite{sylvester:poincare,sylvester:keithzhong}.
Roughly speaking, spaces supporting a
%Finally, we remark, that the idea that a $(1,p)$-
Poincar\'e inequality %for $p>1$ 
cannot ``see'' sets of small density: 
%arises from the first authors prior work in \cite{sylvester:poincare,sylvester:keithzhong}.
points that have small measure density, relative to a given set, can be connected by a %rectifiable curve
quasi-geodesic
that meets that set in correspondingly small length. % density.  
This correspondence, moreover, depends quantitatively but nontrivially on the exponent $p$.  Since we formulate density in terms of %Hardy-Littlewood 
maximal functions, we refer to this characterization as ``maximal $p$-connectivity.''

Intuitively, $\Omega_r$ provides improved behavior for $Y$ without adding much density. 
Once such fillings are available, %they allow to almost connect
pairs of points in $Y$ that are at most a distance $r$ apart can be joined by %sufficiently many 
rectifiable
curves inside $\Omega_r$.  Such curves
may not lie entirely in $Y$, but as the measure density of $\Omega_r \setminus Y$ is small, by maximal connectivity there must be curves which spend little time in this set. The ``bad'' portions 
of these curves can then be removed and replaced by ``good'' portions, via %
a delicate iteration argument. %This iteration is formulated using a connectivity condition, and a characterization of a Poincar\'e inequality which states that if sets of small density can be quantitatively avoided at every scale sufficiently well, then this implies via iteration the desired Poincar\'e inequality. 

%The idea of maximal connectivity and iteration arguments lead to further new characterizations of Poincar\'e inequalities, in Theorems \ref{thm:classification1} and \ref{thm:classification2} which are proved in Section \S \ref{sect:classification}, and which may be of some independent interest. The advantage of iteration is that it replaces the construction of thick families of curves, 
%{\color{blue}
%a standard (but technical) method in the literature,%
%}
%by an iterative construction of a single curve, albeit for an arbitrary set/function. It is this latter task that involves classical tools such as maximal function estimates and decompositions of curves into good and bad parts, in the spirit of a Calderon-Zygmund decomposition.

%In particular, while earlier characterizations such as in \cite{keith2003modulus} or \cite{heinonen1998quasiconformal} involve estimates controlling all functions and all scales simultaneously, our estimates can be proven by considering a fixed but arbitrary scale and using only characteristic functions of sufficiently large density. This allows us to reduce the Poincar\'e inequality to a sufficiently good connectivity estimate of a filling $\Omega_r$ at scales comparable to $r$, and allows to ignore a small set $Y \setminus \Omega_r$. Consequently, we never need an \emph{a priori} Poincar\'e inequality at all scales for the fillings $\Omega_r$, but only one that holds at scales comparable to $r$.
This filling process is subtle, and the dependence of
%{\color{red}
the exponent $p$ %in the Poincar\'e inequality obtained%
%}
on the quality of the filling is nontrivial. This will be illustrated in the examples below in Subsection \S \ref{subsec:PIfilling}.
 
 Interestingly, we avoid throughout this paper any discussion about the modulus of curve families, and we do not construct any curve families to estimate such moduli. However, in recent work it is shown that such curve families always exist on spaces satisfying Poincar\'e inequalities. Thus, our tools can be considered to implicitly construct Semmes families of curves. See \cite{dimarinosavare,nagesetalambv}.

\subsection{General structure of paper}

In Section \S\ref{sec:intermediate} we first recall %some notation, introduce 
basic notions and relevant notation, %followed by a statements of definitions involving PI-fillable subsets
and then give precise definitions for fillings of subsets.  The section  
concludes with the statement of our main result, Theorem \ref{thm:PIthm}, as well as auxiliary results and the strategy of the proof.

%Theorem \ref{thm:PIthm}, and also of Theorems \ref{thm:classification1} and \ref{thm:classification2}, which are the main characterization results for Poincar\'e inequalities that we use. 
%(The proof of the latter two results are delayed until Section \S\ref{sect:classification}.)

In Section \S \ref{sec:proof} we %prove that PI-fillable subsets support Poincar\'e inequalities. 
prove Theorem \ref{thm:PIthm}; 
it states that %PI-fillable subsets 
subsets admitting such fillings, or ``fillable subsets,'' must also 
satisfy Poincar\'e inequalities.  The proof requires Theorems \ref{thm:classification1} and \ref{thm:classification2}, which are characterizations of $(1,p)$-Poincar\'e inequalities and will be proven later.  %As a prototype for later iteration-type arguments,  we also include a proof of quasi-convexity for fillable subsets.

In Section \S\ref{sec:almostunif} we %discuss the application of this Theorem 
apply Theorem \ref{thm:PIthm}
%to the square Sierpi\'nski carpets, and to removal processes in 
first to Sierpi\'nski sponges, and then to 
general metric measure spaces 
%involving uniform domains
with co-uniform domains removed.
%At the end of that section we discuss 
We conclude this section with %Section \S4 with 
new examples of subsets of the Heisenberg group that satisfy Poincar\'e inequalities, as well as
a discussion of
our sufficient condition for planar Loewner subsets. %In the Section \S\ref{sec:isoperim} we will discuss the $p=1$ case, which involves some special considerations, and we give a new way of proving the isometric inequality without using any curve families explicitly. 
All of these applications 
%to Heisenberg groups and Lowener carpets
%in Section \S\ref{sec:almostunif} 
use the results in \S\ref{sec:intermediate}, but readers may choose to see how these results are applied first, before reading %the proofs. 
those technical proofs.
(%
To preserve the flow of discussion the proofs of certain technical results, such as Theorem \ref{thm:cutout}, are postponed to Appendix \ref{a:cutout}.%Further, the %discussion on 
)

Lastly, in Section \S\ref{sect:classification} we %give the proofs of the relevant characterizations for $(1,p)$-Poincar\'e inequalities used in Section \S\ref{sec:proof}.
prove Theorems \ref{thm:classification1} and \ref{thm:classification2} by introducing a certain ``path-connectivity'' function associated to metric measure spaces.  
(Readers who are primarily interested in the classification of Poincar\'e inequalities may opt to read Section \S\ref{sect:classification} independently of the %rest of the paper. 
other sections.)
In Appendix \ref{a:cutout}, we prove Theorem \ref{thm:cutout}, as well as other auxiliary results about uniform domains.

\section{Intermediate results}\label{sec:intermediate}
%\section{Notation and prior results}

\subsection{Notation and Basic Notions}

Throughout the paper, we will work on complete and proper metric measure spaces $X$ equipped with some Radon measure $\mu$.  
{%\color{red}
Consistently, $Y$ refers to %will  be 
a closed subset of $X$ %for which we aim to prove the 
which will be shown to support
Poincar\'e inequalities. In the Euclidean case %, where $X=\R^n$, then 
$X=\R^n$
we will also
%use $S$ for the subset since we imagine the subset as a 
denote such subsets by $S$, suggestively for
``sponge''.}

\begin{remark}[Types of constants] \label{rmk:local}
As a %nother 
convention, we refer to certain constants as {\sc structural constants} if they describe fixed parameters for standard hypotheses or conditions. These include the doubling constant $D \geq 1$, the constant $C\geq 1$ in the Poincar\'e inequality (as well as uniformity constants $A>0$ that imply such inequalities), the choice of exponent $p \geq 1$, and the scale parameter $r_0 > 0$. %for local conditions.

Moreover, conditions on a metric space $X$ that depend on the scale parameter --- i.e.\ an %unspecified 
upper (distance) bound between points on $X$ --- are referred to as {\sc local} conditions.  In particular, a {\sc locally $D$-doubling metric measure space} refers to a $(D,r_0)$-doubling metric measure space for some $r_0 > 0$ and a {\sc local $(1,p)$-Poincar\'e inequality} refers to a $(1,p)$-Poincar\'e inequality that is valid at scale $r_0$, for some $r_0>0$.  

The same convention will apply to other conditions in the sequel. Note, in this convention, the scale $r_0$ is 
 assumed to be 
uniform throughout the space.  Our convention is therefore slightly different from others, such as in \cite{bjornlocal}, where the scale can vary %is allowed to vary depending on 
with the point.
\end{remark}

Open balls in a metric space are denoted by  $B=B(x,r)$, and their inflations by $CB=B(x,Cr)$, despite the ambiguity that balls may not be uniquely defined by their radii. %This notation presumes a known radius. 
If multiple metrics are used, we indicate the one used with a subscript, e.g. $B_d(x,r)$ to mean the ball with respect to the metric $d$. 

By a curve $\gamma$ in a metric space $X$ we mean a Lipschitz
map $\gamma \co I \to X$, where $I \subset \R$ is a bounded closed interval. 
As a convention, we assume that all rectifiable curves are {\em parametrised by arc-length} unless otherwise specified, in which case %its Lipschitz constant 
it
satisfies $\Lip(\gamma)(t) \defeq \limsup_{s \to t}\frac{d(\gamma(t),\gamma(s))}{|s-t|} \leq 1$, $t\in I$.

A metric space $X$ is called {\sc $\Lambda$-quasiconvex} if for every $x,y \in X$ there exists a curve $\gamma$ connecting $x$ to $y$ with $\len(\gamma) \leq \Lambda d(x,y)$. Such a curve $\gamma$, when it exists, is called a {\sc $\Lambda$-quasi-geodesic}. A space $X$ is called {\sc $\Lambda$-quasiconvex at scale $r_0>0$}, if the same holds for every $x,y \in X$ with $d(x,y) \leq r_0$. %While we will not need it in the future, we remark, that a space $X$ is geodesic if it is $1$-quasiconvex.

Frequently, we restrict the metric and measure onto some subset $A \subset X$. On $A$ the measure is denoted $\mu|_A$, and $d|_{A \times A}$, but we will often avoid this cumbersome notation. Also, metric balls in $A$ are simply intersections $B_{d|_{A\times A}}(x,r) = B(x,r) \cap A$, and they are denoted occasionally by $B_A(x,r)$.

Related to Definition \ref{def:measuredoubling}, a metric space $X$ is said to be {\sc $N$-metric doubling}, for some $N \in \N$, if for every ball $B(x,r)$ there exist $x_1, \dots, x_m \in X$ for some $m \leq N$ such that
\[B(x,r) \subset \bigcup_{i=1}^m B(x_i, r/2).\]
Clearly, every metric space equipped with a $D$-doubling measure is $D^4$-metric doubling. 
Later we will specialize to doubling measures with certain quantitative growth, as below.

\begin{definition} \label{def:arreg}
A proper metric measure space $(X,d,\mu)$ 
is said to be
{\sc Ahlfors $Q$-regular} with constant $C>0$ if for
all $0<r<\diam(X)$ and any $x \in X$ we have
$$
\frac{1}{C} r^Q \leq \mu(B(x,r)) \leq Cr^Q.
$$
The space is said to be {\sc Ahlfors $Q$-regular} up to scale $r_0$ if the same holds for $r\in (0,r_0)$.
\end{definition}

%We will later in Section \S \ref{sec:isoperim} need the following lemma.

%\begin{lemma} \label{lem:goodscale}
%Let $X=(X,d,\mu)$ be a $D$-doubling metric measure space and let $E$ be a Borel subset of $X$.  If $x$ is a point of density of $E$ and $R>0$ is such that
%$$
%\frac{\mu(E \cap B(x,R))}{\mu(B(x,R))} \leq b
%$$
%for some $b \in (0,1)$, then there exists
%$r' \in (0,R)$ such that 
%$$
%\frac{b}{D} \leq \frac{\mu(E \cap B(x,r'))}{\mu(B(x,r'))} \leq b.
%$$
%\end{lemma}
%
%\begin{proof} Putting $h(t)\defeq \frac{\mu(E \cap B(x,t))}{\mu(B(x,t))}$, it follows from doubling that
%$\frac{h(t/2)}{h(t)} \leq D$
%for all $t > 0$.  
%
%Putting $R_k = 2^{-k}r$, since $x$ is a point of density of $E$, there is some $N_0 \in \N$ such that for all $k \geq N_0$ we have $h(R_{k+1})>b$. Let $K$ be the smallest index such that $h(R_K)\leq b$. If now $h(R_K) \leq bD^{-1}$, then 
%$$
%h(R_{K+1}) = 
%h\Big( \frac{R_K}{2} \Big)\leq 
%D \cdot h(R_K) \leq b
%$$
%which is a contradiction. Thus we have the desired estimate for $r'=R_K$. 
%\end{proof}

%\begin{note}[\underline{Sylvester, Apr 2018}]
%Should we use $\mathcal{M}$ for maximal function?
%Are our maximal functions centered or non-centered?
%\end{note}

We define the centered Hardy-Littlewood maximal functions as
\begin{equation}\label{eq:maxfunc}
M f(x) \defeq \sup_{0<r} \vint_{B(x,r)} f ~d\mu.
\end{equation}
$$
M_s f(x) \defeq \sup_{r \in (0,s)} \vint_{B(x,r)} f ~d\mu.
$$

Here and in what follows,
we will use a localized version of the Maximal Function Theorem, %that is only slightly modified from classical arguments; 
see \cite[Theorem 2.19]{Mattila1999}.
%For completeness, however, we include a proof.
The proof below, given for completeness, is a slight modification of the classical argument.

\begin{lemma} \label{lem:localmaximal}
If $X=(X,d,\mu)$ is a $D$-doubling metric measure space at scale $8R$, then
$$
\mu\left( \{ M_Rf > \lambda \} \cap B(x,r) \right) \leq \frac{D^3 \|f|_{B(x,r+R)}\|_{L^1}}{\lambda}
$$
for all $x \in X$, all $f \in L^1(X)$, and all $r,R,\lambda > 0$.
\end{lemma}

\begin{proof}
Put $E_\lambda := \{ M_Rf > \lambda \} \cap B(x,r)$.  For each $y \in E_\lambda$ there exists $r_y \in (0,R)$ so that
\begin{equation} \label{eq:level}
\int_{B(y,r_y)} |f| ~d\mu > \lambda \mu(B(y,r_y)),
\end{equation}
so $\{B(y,r_y)\}_{y \in E_\lambda}$ clearly covers $E_\lambda$.  A
%\marginpar{\tt\tiny \hspace{.45in} add reference?}
%\marginpar{\tt\tiny \hspace{.35in} Mattila, Thm 2.1}
standard $5$-covering theorem \cite[Theorem 2.1]{Mattila1999} (or alternatively \cite[Theorems 2.8.4--2.8.6]{federer}) %covering theorem
then asserts that there is a countable, %(or possibly finite), 
pairwise-disjoint subcollection of balls $B_i := B(y_i,r_{y_i})$ for $i \in I$ with each $y_i \in E_\lambda$ and so that
$$
\bigcup_{y\in E_\lambda} B(y,r_y) \subset \bigcup_{i \in I} B(y_i,5r_{y_i}).
$$
Using the fact that
$
\bigcup_{i \in I} B_i \subset B(x,r+R)
$
we then obtain
$$
\mu(E_\lambda) \leq
\sum_i \mu(B(y_i,5r_{y_i})) \leq
D^3\sum_i \mu(B_i) \leq
\frac{D^3}{\lambda} \sum_i \int_{B_i} |f| ~d\mu \leq
\frac{D^3}{\lambda} \int_{B(x,r+R)} |f| ~d\mu
$$
as desired.
\end{proof}

\subsection{Poincar\'e inequalities via fillings} \label{subsec:PIfilling}

In this subsection, we 
make precise 
the notion of filling and ``fillable set,'' %which will be employed as 
the main tools in proving our results. One useful property of fillings $\Omega_r$ is that they satisfy a Poincar\'e inequality \emph{a priori} only at scales comparable to $r$. For our applications, this property will be easy to check, in that the geometry of the filling at scale $r$ will be kept simple.

\begin{definition}\label{def:PIfilling}
Let $\epsilon \in (0,1)$, $p \in [1,\infty)$, and $C,D
%,r_0$. 
\geq 1$.
Given a closed subset $Y$ of a complete space 
$X$,
a closed subset $\Omega_r \subseteq X$ is called an {\sc $\epsilon$-filling of $Y$} at scale $r>0$ with constants $(D,C,p)$  
if %there exists $r_0 > 0$ so that 
%for every scale 
%$r \in (0,r_0)$ %there exists 
the following conditions hold:%
\begin{enumerate}
\item %\sout{for every $x \in X$ the inclusion $X \cap B(x,2Cr) \subset \Omega_r$ holds,}
%\item {\color{red}
$Y \subset \Omega_r$,
%}
\item for every $x \in Y$, the density condition
$\displaystyle
\frac{ \mu( \Omega_r \cap B(x,r) \setminus Y ) }{ \mu( \Omega_r \cap B(x,r) ) } < \epsilon
$
holds,
\item the restricted space $(\Omega_r,d|_{\Omega_r \times \Omega_r}, \mu|_{\Omega_r})$ is %a 
$D$-doubling 
and satisfies a $(1,p)$-Poincar\'e inequality
at scale $2r$,
%$p$-PI space with
$$
\avint_B |f-f_B| d\mu \leq
C s \left(
\avint_{C B} \Lip(f)^p d\mu
\right)^{1/p}
$$
where $B=B_{\Omega_r}(x,s)$ is any ball in $\Omega_r$ with $s \leq 2r$.
\end{enumerate}
Then, $Y$ is called {\sc $p$-Poincar\'e $\epsilon$-fillable up to scale $r_0$}, 
with constants $(D,C)$ --- or {\sc $(\epsilon,D,C,p)$-PI fillable up to 
scale $r_0$}, for short --- if 
there exists an $\epsilon$-filling at scale $r$ of $Y$ with constants $(D,C,p)$ and any $r\in (0,r_0)$.

We say that $Y$ is {\sc asymptotically $p$-Poincar\'e fillable} if for some fixed constants $(D,C)$ and for any $\epsilon>0$ there exists $r_0>0$ such that $Y$ is $(\epsilon,D,C,p)$-PI fillable up to scale $r_0$.
\end{definition}

In terms of these sets, we can now give sufficient conditions for a subset to satisfy a Poincar\'e inequality.

\begin{theorem}\label{thm:PIthm}
Fix %positive 
structural constants $(p,D,C,r_0)$ 
%with $p \geq 1$ 
and let $X$ be a $D$-doubling
metric measure space.
Then, for every $q>p$ there exist $\epsilon_q, 
C_q,C_r>0$
%, depending also on the structural constants, 
with the following properties:
\begin{itemize}
\item[(a)] \label{item:gap}
If $Y$ is a
$p$-Poincar\'e, $\epsilon_q$-fillable subset of $X$ up to scale $r_0$ with constants $(D,C)$, 
then it satisfies a
$(1,q)$-Poincar\'e inequality with constant $C_q$ at scale $r_0/C_r$. 
\item[(b)] \label{item:nogap}
Further, if $Y$ is an asymptotically $p$-Poincar\'e fillable subset of $X$, then it satisfies a local $(1,q)$-Poincar\'e inequality 
for every $q > p$.
\end{itemize}
Here the constants $\epsilon_q$ and $
C_q, C'
$ are independent of the original scale $r_0$, but depend on the other structural constants and on the exponent $q$.
\end{theorem}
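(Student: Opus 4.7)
The plan is to invoke the characterizations of Poincar\'e inequalities established later in Theorems \ref{thm:classification1} and \ref{thm:classification2}, which recast a $(1,q)$-Poincar\'e inequality on $Y$ as a \emph{maximal $q$-connectivity} property:\ any two points $x,y \in Y$ at distance $\lesssim r_0$ can be joined inside $Y$ by a quasi-geodesic whose length-integral against any nonnegative Borel function $\rho$ is dominated by $d(x,y) \cdot (M(\rho^q))^{1/q}$ over a ball of radius comparable to $d(x,y)$. The task thus reduces to building such curves in $Y$ from the fillings $\Omega_r$.

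For $x,y \in Y$ with $d(x,y) \leq r \leq r_0$, the hypothesis on $\Omega_r$ (doubling and $(1,p)$-PI at scale $\sim r$) supplies, via the \emph{same} classification theorems applied inside $\Omega_r$, a quasi-geodesic $\gamma_0 \subset \Omega_r$ between $x$ and $y$ satisfying a maximal $p$-connectivity estimate. The curve $\gamma_0$ need not lie in $Y$, but the density condition (2) of Definition \ref{def:PIfilling} guarantees that its excursions into the bad set $\Omega_r \setminus Y$ occupy only an $\epsilon_q$-fraction of the ambient ball measure, while Lemma \ref{lem:localmaximal} controls the corresponding maximal functions.

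The core of the argument is an iterative refinement:\ each maximal excursion of $\gamma_0$ outside $Y$ has both endpoints in $Y$ and is contained in some ball $B(z_1, r_1)$ with $r_1 < r$. Applying PI-fillability at scale $r_1$ to $\Omega_{r_1}$ replaces this excursion with a new quasi-geodesic in $\Omega_{r_1}$ whose own excursions lie in $\Omega_{r_1} \setminus Y$ and are again $\epsilon_q$-sparse. Iterating over all generations and all excursions produces a sequence $\gamma_n$ of curves whose bad portions shrink geometrically; the limit $\gamma_\infty$ lies entirely in $Y$. To upgrade the $p$-estimate available at each scale to the $q$-estimate we need, one interpolates via H\"older's inequality between the uniform $p$-bound and the density gain $\epsilon_q$ at each generation, producing a geometric factor of the form $\epsilon_q^{\beta(q-p)}$ per step. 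Choosing $\epsilon_q$ small enough, depending on the gap $q-p$ and on the structural constants, forces this factor to dominate the multiplicative growth in the number of excursions per generation, so that the telescoping series sums to the desired maximal $q$-connectivity bound. Part (b) then follows from part (a) applied at each sufficiently small scale afforded by asymptotic fillability.

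The principal obstacle is the bookkeeping of the iteration:\ the excursions at scale $r_i$ must be organized, via a Whitney-type or stopping-time decomposition, so that the local density hypothesis (2) can be summed coherently across generations without double-counting, and the H\"older interpolation between exponents $p$ and $q$ must be sharp enough that the strict inequality $q > p$ \emph{alone} produces the needed decay. This is precisely where the dependence of $\epsilon_q$ on $q-p$ enters, and where the method must break down at the borderline $q=p$, consistent with the loss of a $(1,1)$-Poincar\'e inequality witnessed in Theorem \ref{thm:ndimSiercarpet}(4).
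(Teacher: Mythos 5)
Your high-level reduction (pass to the maximal connectivity characterization of Theorems \ref{thm:classification1} and \ref{thm:classification2}, build curves in $Y$ from curves in the fillings) is the paper's strategy, but the engine you describe for the iteration has a genuine gap. In your multi-generation scheme the obstacle-avoiding connectivity of $\Omega_{r_1}$ can only be invoked at points where one has maximal density bounds for the obstacle, and the endpoints of the excursions of $\gamma_0$ are arbitrary points of $Y$ at which no bound on $M(1_E)$ is available (it is assumed only at $x$ and $y$). If, to get around this, you bound the $E$-time of each replacement piece merely by its length, then the total contribution over all generations is of order $\sum_k 2^{-k}C\,d(x,y)\sim d(x,y)$, which is far larger than the required $\delta\,\tau^{1/q}d(x,y)$; no per-generation H\"older interpolation producing a factor $\epsilon_q^{\beta(q-p)}$ rescues this, and indeed that is not how the exponent gap is exploited. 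In the paper the gap $q>p$ is spent \emph{once}, via Corollary \ref{cor:deltasmall}: the fillings, being $p$-PI at their scale, are $(C_0,\Delta,p)$-max connected by Theorem \ref{thm:classification1}, and the passage to exponent $q$ converts this into $(C_0,\delta',\tau_0,q)$-max connectivity at a single fixed level $\tau_0$ with $\delta'$ as small as desired; no geometric gain per generation is needed.

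The correct bookkeeping (Theorem \ref{thm:mainresult2}) then runs in two separated stages. First, quasiconvexity of $Y$ is proved by an \emph{obstacle-free} infinite iteration (Lemma \ref{lemma:quasiconvex}), where only the lengths of the excursions matter and geometric decay plus Arzel\'a--Ascoli suffice. Second, for the obstacle step one works at the top scale only: using Lemma \ref{lem:localmaximal} and the density condition (2) of Definition \ref{def:PIfilling} one selects nearby points $x',y'\in Y$ at which the \emph{combined} obstacle $F_r=E\cup(\Omega_{2C'r}\setminus Y)$ has maximal density below $\tau_0$; the single curve furnished by the filling then meets $F_r$ in length at most $\delta'(\cdot)^{1/q}r$, so its portions outside $Y$ are already that short, and replacing them by $\Lambda$-quasigeodesics of $Y$ (whose $E$-time is bounded by their length) adds only $\Lambda$ times this small quantity. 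Your proposal conflates these two stages, and without the inclusion of $\Omega_r\setminus Y$ in the first-generation obstacle and the separate, obstacle-free proof of quasiconvexity, the iteration as you describe it does not close.
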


\begin{remark}
Note that $X$ is not assumed, a priori, to support a Poincar\'e inequality; only the fillings $\Omega_r$ from Definition \ref{def:PIfilling} do.  In many cases, including our applications in Section \S\ref{sec:almostunif}, we will assume that $X$ is a $p$-PI space, in which case good choices of $\Omega_r$ will inherit Poincar\'e inequalities from $X$.
\end{remark}

Note that the local Poincar\'e inequality could be improved to a semi-local one \cite{bjornlocal} (that is, \eqref{eq:defPI} holds
at every scale, with constant depending on the scale and location only), if the space is proper and connected.
In the case of bounded metric spaces, like non-self-similar Sierpi\'nski carpets, this semi-local property further improves to the usual global type.

\begin{remark}
It is crucial in Part (a) of the previous theorem that the density parameter $\epsilon_q$ be allowed to depend on the structural constants $D,C,p$.  
\end{remark}

Here we give some examples involving fillings of subsets and how the exponent of the Poincar\'e inequality can depend subtly on how the set is filled. In each case we construct a filling with arbitrarily good Poincar\'e inequalities, namely local $(1,1)$-Poincar\'e inequalities. The subset, however, only inherits the Poincar\'e inequality if the density parameter is sufficiently small, relative to a controlled constant in the Poincar\'e inequality of the filling.
%\subsection{Example of PI, August 18th, Sylvester}

%\sout{It seems necessary to be more careful about quantifying the different constants that appear. I have for example talked about that $\epsilon$ should somehow relate to $C_{PI}$ or the constant in almost uniform connectivity.
%\vspace{.1in}
%\noindent \textbf{Example:}}
%\marginpar{\tt\tiny
%Should we move this after Defn \ref{def:finemax}?
%}

\begin{example} \label{ex_cornercutpt}
Let $X = [-1,1]^2$, which is a $(1,1)$-PI-space, while the subset
$$
Y = [-1,0] \times [0,1] \cup [0,1] \times [-1,0] %\subset X
$$
is a $(1,p)$-PI-space only for $p>2$. However, if we ``thicken'' $Y$ at the origin, then the filling %by defining
$$\overline{Y}^h_r=Y \cup \overline{B(0,hr)},$$
%we get a $(1,q)$-PI-space for every fixed $q \in [1,\infty)$. %\sout{The constant $C_q^h=C(\bar{Y}^h_r)$ for the $(1,q)$-Poincar\'e inequality on} 
%The space $\overline{Y}^h_r$  
satisfies a $(1,q)$-Poincar\'e inequality at scale $r$ with constant $C_q^h$, where
$$%\begin{eqnarray*}
C_q^h %&\sim& \log(1/h)
~\approx_q~
\begin{cases}
h^{\frac{q-2}{q}}, & \text{if } 1 \leq q < 2,
\\
\log(1/h), & \text{if } q=2. %\\
%M_q & \text{if } q>2
\end{cases}
%\\
%C_q^h &\sim& h^{\frac{q-2}{p}}.
$$%\end{eqnarray*}
and where $C_q^h$ can be bounded independent of $h$ for $q > 2$.
Here, the ratio implied in $\approx_q$ depends on $q$, but not on $h$, and could be made explicit. %Also, $M$ is some constant independent of $h$. 

For every $r>0$, we can set $\Omega_r=\overline{Y}^h_r$, %\sout{plays the of a ``thickening'' in the sense of Definition \ref{def:epscon},}, 
and see that $Y$ is $q$-Poincar\'e $h^2$-fillable up to scale $1$ with constants $(D,C_q^h)$, for some uniform doubling constant $D$. 
%\sout{See Definition \ref{def:PIfilling}, but for now it suffices to conclude that $\overline{Y}^h_r \setminus Y$ has measure density at most $h^2$ in balls of radius $r$. }
By Theorem \ref{thm:PIthm} then $Y$ satisfies a $(1,q)$-Poincar\'e inequality for $q>2$, as expected. However, for $q \in [1,2]$, the Poincar\'e constant $C_q^h$ blows up as $h \to 0$, so the subset $Y$ need not, and does not, satisfy a $(1,q)$-Poincar\'e inequality for $q \in [1,2]$.
\end{example}

%for every $x \in D$ and at scales comparable to $r$ with parameter $\epsilon_h \sim h^2$.

%However, $D$ is not a $p$-PI-space for any $p \in [1,2]$. Thus, the condition imposed by $C_{PI,q}$ on $\epsilon_0$ should involve the exponent $q$ in some form, as in Corollary \ref{cor:almostunifcon}.

%\end{example}

The following example is closely related to the discussion of fat Sierpi\'nski carpets and sponges in Section \S\ref{sec:sierpinskisponge}.

\begin{example}
 Let $X = [0,1]^2$, and let $C_{1/3}$ be the usual ``middle thirds'' Cantor set in $[0,1]$ and denote by $\mathcal{I}_k$ the open removed intervals of length $3^{-k}$ in the construction of $C_{1/3}$. Now define the set of squares
\[
\mR = \left\{ \left.I \times \left(\frac{1-3^{-k}}{2}, \frac{1+3^{-k}}{2} \right) \right| I \in \mathcal{I}_k, k \in \N\right\}
\]
and denote
%the residual set arising from removing them
the complement of their union as
\[
Y = [0,1]^2 \setminus \bigcup_{R \in \mR} R.
\]
Unlike the standard ``middle-ninths''
%Note, this set is similar to the standard $\frac{1}{3}$-
Sierpi\'nski carpet, %but where 
only the squares intersecting the line $y = \frac{1}{2}$ are removed. (See Figure \ref{fig:exampcarp}.)

Putting $\alpha = \frac{\log(2)}{\log(3)}$ for the Hausdorff dimension of $C_{1/3}$, we now claim that $Y$ with the restricted Lebesgue measure and Euclidean distance satisfies a $(1,p)$-Poincar\'e inequality if and only if $p>2-\alpha$. To see why, both of the sets
\begin{align*}
Y_+ = Y \cap [0,1] \times \Big[0,\frac{1}{2}\Big]
%\left\{(x,y) \in Y | y \geq \frac{1}{2}\right\}
\text{ and }
Y_- = Y \cap [0,1] \times \Big[\frac{1}{2},1\Big]
%\left\{(x,y) \in Y | y \leq \frac{1}{2}\right\}
\end{align*}
%will be seen to be 
are
uniform domains (see Definition \ref{def:unifdom}) and therefore satisfy $(1,1)$-Poincar\'e inequalities (see Theorem \ref{thm:unifcon}).  Moreover, we have
$$
Y = Y_+ \cup Y_-  \text{ and } Y_+ \cap Y_- = C_{1/3} \times \{\frac{1}{2}\},
$$ 
so $Y$ arises from gluing $Y_\pm$ along a $\alpha$-dimensional set and by \cite[Theorem 6.15]{heinonen1998quasiconformal}, it satisfies a $(1,p)$-Poincar\'e inequality for $p>2-\alpha$. On the other hand, $Y$ does not satisfy a Poincar\'e inequality for $p \in [1,2-\alpha]$; indeed, consider the function% $u$ defined by
 \[
u(x,y)= 
\max\big\{\min\Big\{\frac{1}{h}\Big(y-\frac{1}{2}\Big),1\Big\},0\big\}
%\begin{cases}
%0, & \text{if } y \leq \frac{1}{2},
%\\
%\min\{\frac{y-\frac{1}{2}}{h},1\}, & \text{if } y> \frac{1}{2}.
%\end{cases}
\]
On $[0,1] \times (1/2,1/2+h]$ we have $\Lip(u) = \frac{1}{h}$, %\leq \frac{1}{h}1_{y \in (1/2,1/2+h]}$, 
so if $q<2-\alpha$, then for all $h<\frac{1}{3}$ we have
$$
\vint_{[0,1]^2} \left|u-u_{[0,1]^2} \right| ~d\lambda ~\geq~ \frac{1}{6} ~\geq~
h^\frac{2-\alpha-q}{q}
\approx_q 
\left(\vint_{[0,1]^2} \Lip(u)^q ~d\lambda \right)^{1/q} $$
%If $q<2-\lambda$, then this 
which
contradicts the $(1,q)$-Poincar\'e inequality as $h \to 0$. The case $q=2-\alpha$ is similar, but we consider the function
  \[
u(x,y)=
\begin{cases}
1, & \text{if }  y \leq \frac{1}{2},
\\
\min\left\{\max\left\{\log\left(\frac{h}{y-\frac{1}{2}}\right),0\right\},1\right\}, & \text{if } y> \frac{1}{2}.
\end{cases}
\]
Again $Y$ has certain good fillings that consist of
\[
\Omega_r = [0,1]^2 \setminus \bigcup_{R \in \mR, \diam(R) \geq r/9} R.
\]
At scale $r$, only finitely many sets $R$ with diameters larger than $r/9$ are near points in $\Omega_r$.  It follows that $\Omega_r$ satisfies $(1,1)$-Poincar\'e inequalities at scales comparable to $r$ with constants $(D,C)$ independent of $r$. 
 
However, for balls centered on %the line 
$y=1/2$ the density of $\Omega_r \setminus Y$ %the complement of $Y$ in $\Omega_r$ 
is bounded from below, say by some constant $\delta>0$. Thus, these are only $(\delta,D,C,1)$-PI-fillable %for some $\delta>0$, 
and not asymptotically 1-Poincar\'e fillable. This corresponds to the fact that we obtain only a $(1,p)$-Poincar\'e inequality for $p>2-\lambda$, instead of for all $p>1$.

% \begin{note}[\underline{Sylvester, June 2018}]
% Should we move/change/adjust these. We are not stating the definitions so are these confusing?
% \end{note}
% 
% \begin{note}[\underline{jasun, June 2019}]
% {\edit yes, let's move it to \S2.2}
% \end{note}

 \begin{figure}[h!]
  \centering
    \includegraphics[width=0.4\textwidth]{./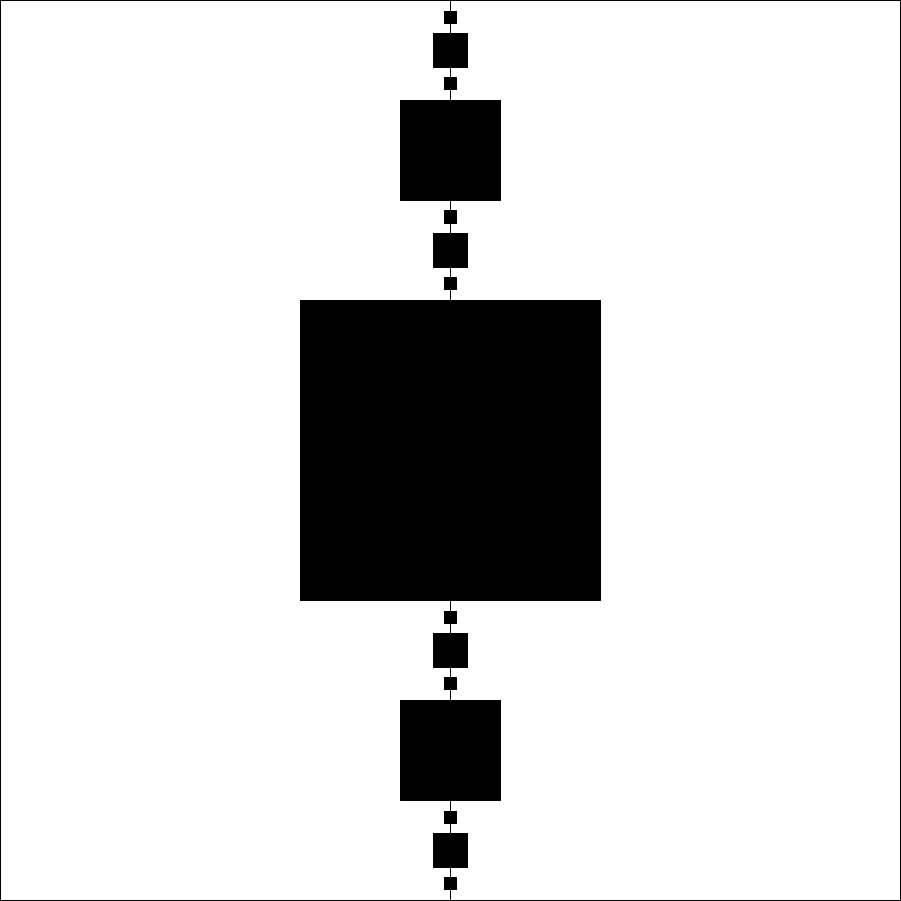}
    \caption{An approximant of the space $Y$ with the squares removed at the first three levels. The image is rotated by 90 degrees.}%There are still many paths from the left-hand to the right-hand side, but they get ``squeezed'' through a $\lambda$-dimensional Cantor set. Concentrating the gradient of a function in the neighborhood of this set gives the sharp range of the exponents for the Poincar\'e inequality.}
    \label{fig:exampcarp}
\end{figure}

\end{example}

%\begin{note}[\underline{Jasun, 29 Mar 2018}]
%{\edit
%Some redundant phrases in the example were omitted from the above example.  (See previous versions.)
%}

\subsection{Poincar\'e inequalities via ``maximal'' connectivity}
%\subsection{Outline of proof}

The proof of Theorem \ref{thm:PIthm}
is based on general techniques that reduce the %\sout{verification of a}
Poincar\'e inequality to
%\sout{checking}
a certain connectivity property at all scales and with sets (or `obstacles') of
%\sout{definite size}
prescribed densities.
%\sout{, which is}
 These densities are in turn measured in terms of maximal functions.

The starting point is this very notion of connectivity:
%which allows for a controlled amount of gaps when avoiding obstacles. 
roughly speaking,
``if a set $E$ has small measure 
density
(in a scale invariant way) then there are %curves $\gamma$ spending only a short length in the set
curves of unit speed that spend only a short time within $E$.%
''
%Fixing $p \geq 1$, %$q>1$ with $q \geq p$, %we have the following property.
%we introduce the following property for metric measure spaces.

\begin{definition}\label{def:finemax}
Let $\delta>0$ and $C,p \geq 1$. We say that a pair of points $x,y \in X$ for a metric measure space $(X,d,\mu)$ is {\sc $(C,\delta,p)$-max connected}, if for every $\tau>0$ with $r := d(x,y)$, and every Borel-measurable set $E$ such that
%\begin{eqnarray*}
\begin{align} \label{eq:finemaxdensity}
M_{Cr}(1_E)(x) < \tau
\text{ and }
M_{Cr}(1_E)(y) < \tau
%\end{eqnarray*}
\end{align}
there exists a 1-Lipschitz curve $\gamma \co [0,L]\to X$, for some $L>0$, such that
\begin{enumerate}
\item $\gamma(0)=x$
\item $\gamma(L)=y$
\item $\len(\gamma) \leq Cr$
\item the following integral inequality holds:

\begin{equation}\label{eq:finemax1}
\int_\gamma 1_E ~ds \leq %C'
\delta\tau^\frac{1}{p}r.
\end{equation}
\end{enumerate}
We say that a space $(X,d,\mu)$ is {\sc $p$-maximally connected at scale $r_0$} with constants $(C,\delta)$ ---
or {\sc $(C,\delta,p)$-max connected at scale $r_0$}, for short --- if every pair $x,y\in X$ with $d(x,y) < r_0$ is $(C,\delta,p)$-max connected.
%Let $C,C'>0, p \geq 1$. We say that a metric measure space $(X,d,\mu)$ is {\sc $p$-max-connected (with constants $(C,C')$) at scale $r_0$}
%if for every  $x,y \in X$
\end{definition}

%\marginpar{\tt \tiny \textcolor{red}{Alternative definition with curve fragments}}
%\begin{definition}%\label{def:finemax}
%Let $C\geq 1,\delta>0, p \geq 1$. We say that a metric measure space $(X,d,\mu)$ is {\sc $p$-maximally connected at scale $r_0$} (with constants $(C,\delta)$) ---
%or $(C,\delta,r_0,p)$-max connected, for short ---
%Let $C,C'>0, p \geq 1$. We say that a metric measure space $(X,d,\mu)$ is {\sc $p$-max-connected (with constants $(C,C')$) at scale $r_0$}
%if for every $\tau>0$ and every  $x,y \in X$ with $d(x,y)=r<r_0$, and every Borel function $g\co X \to [0,1]$, such that
%\begin{eqnarray*}
%\left(M_{Cr}(g)^p(x)\right)^{\frac{1}{p}} &<& \tau
%\\
%\left(M_{Cr}(g)^p(x)\right)^{\frac{1}{p}} &<& \tau
%\end{eqnarray*}
%then
%there exists a $L>0$ and a Lipschitz curve fragment $\gamma \co K\to X$ such that

%\begin{enumerate}
%\item $\gamma(\min(K))=x$
%\item $\gamma(\max(K))=y$
%\item $\len(\gamma) \leq Cr$
%\item

%\begin{equation}%\label{eq:finemax1}
%\int_\gamma g ~ds + \gap(\gamma) \leq \delta\tau^\frac{1}{p}r.
%\int_\gamma 1_E ~ds \leq C'\tau^\frac{1}{p}r.
%\end{equation}
%\end{enumerate}
%\end{definition}

\begin{remark}\label{rmk:reduction}%[Reductions in generality] 
%\sout{Throughout this paper we will assume
% always that rectifiable curves are $1$-Lipschitz. This makes it easier to apply Arzela-Ascoli-type arguments. However, we could in many cases simply
%\sout{assume that the curves are rectifiable, and then}
%re-parametrize them by length.}
Since the measure is assumed to be Borel regular, it is enough to verify Definition \ref{def:finemax} for all open (or all closed) ``obstacles'' $E$. Indeed, if $\epsilon>0$ and $E\subset X$ is any Borel set, we can find using Borel regularity an open set $E'$ so that $E\subset E'$ and $M_{Cr}(1_{E'\setminus E})(x),M_{Cr}(1_{E'\setminus E})(y)<\epsilon$. The case of closed sets is only slightly harder, and, as we don't use it anywhere, we only sketch the details. One can for each open set $E$ exhaust it with closed sets $E_k = \{x : d(x,X \setminus E) \geq \frac{1}{k}\}$. One then finds a sequence of curves $\gamma_k$ for each $E_k$, and since $E_{k-1} \subset {\rm int}(E_k)$, then after passing to a subsequence and using monotone convergence, we can find a curve $\gamma$ which satisfies (1)--(4) for $E$.
\end{remark}

%\sout{By appropriate iteration arguments, this condition can be related to Poincar\'e inequalities. This is stated below in Theorem \ref{thm:classification}.}

A technical issue with %The problem with proving 
checking for maximal connectivity %in our case 
is that the desired maximal function estimates for $X$ are not directly related to those for the filling $\Omega_r$. Furthermore, it can be challenging to prove the property for all density ``levels'' $\tau>0$. This is dealt with the following variants of this connectivity.

\begin{definition} \label{def:maxconnlevel}
We say that a metric measure space $(X,d,\mu)$ is {\sc $p$-maximally connected at level $\tau_0$ and scale $r_0$} (with constants $(C,\delta)$) --- or {\sc $(C,\delta,\tau_0,p)$-max connected at scale $r_0$},
for short ---
%We say that $X$ is {\sc $(C,\delta, \tau_0,p)$-max connected at scale $r_0$}
if the $p$-maximal connectivity conditions of Definition \ref{def:finemax} hold for only $\tau=\tau_0$, instead of for all $\tau$. %--- that is, Items (1)--(4) of Definition \ref{def:finemax} hold for any set $E$ and $x,y\in X$ with $d(x,y) \leq r_0$, some $\gamma$
%---  at scale $r_0$ with constants $(C,\delta)$ but only for
%all
%$\tau=\tau_0$.
\end{definition}

%\sout{For the definition of the maximal function see Equation \ref{eq:maxfunc} above.} 
This condition may seem technical at first.  The core point, however, is that it allows for characterizing Poincar\'e inequalities in terms of sufficiently good avoidance of %definite sized 
obstacles
of a fixed level, 
%Therefore, 
so
one need not consider obstacles of every %size. 
level.
Further, this  ``fixed-level'' property is inherited by  sufficiently dense subsets. 

\begin{lemma} \label{lem:prototype}
Suppose $X$ is $D$-doubling and $(C,\delta,\tau_0,p)$-max connected at scale $r_0$ and that $Y$ is a closed, $\Lambda$-quasiconvex subset of $X$. If $x,y \in Y$ satisfy $d(x,y)<r_0$, as well as 
$$
M_{Cr} 1_{X \setminus Y} (x) < \frac{\tau_0}{2}
\text{ and }
M_{Cr} 1_{X \setminus Y} (y) < \frac{\tau_0}{2},
$$
then the pair $(x,y)$ is 
%{\color{red} also} 
$(\Lambda C,\Lambda \delta,\frac{\tau_0}{2},p)$-max connected relative to $Y$ with its restricted measure and distance.
\end{lemma}

We will only sketch the main form of the argument, since the lemma will not be used directly and a variant appears later. The main idea, however, is replacing bad portions of an initial curve with better ones, as depicted in Figure \ref{fig:replacement}.

\begin{proof} By Remark \ref{rmk:reduction}, it suffices to consider open sets. Let $E \subset Y$ be a relatively open arbitrary open set with $M^Y_{Cr} 1_{E} (z)<\tau_0/2$ for $z=x,y$ but where the maximal function is computed relative to $Y$; for $F=E \cup (X \setminus Y)$ it then follows that $M_{Cr} 1_{F} (z)<\tau_0$, where the maximal function is once again relative to $X$. 

Thus the definition of max-connectivity gives a curve $\gamma$ that spends at most $\delta \tau_0^{1/p} r$ in the complement of $Y$ and the set $E$. The set $\gamma^{-1}(X \setminus Y)$ consists of countably many disjoint maximal open intervals $(a_i,b_i)$, so we can replace each $\gamma|_{(a_i,b_i)}$ by a $\Lambda$-quasigeodesic in $Y$ that joins $\gamma(a_i)$ and $\gamma(b_i)$. This produces a new curve $\gamma'$ which lies entirely in $Y$, is at most $\Lambda C d(x,y)$ long, and spends at most $\Lambda \delta \tau_0^{1/p} r$ time in $E$, as desired.
\end{proof}

 \begin{figure}[h!]
  \centering
    \includegraphics[width=0.40\textwidth]{./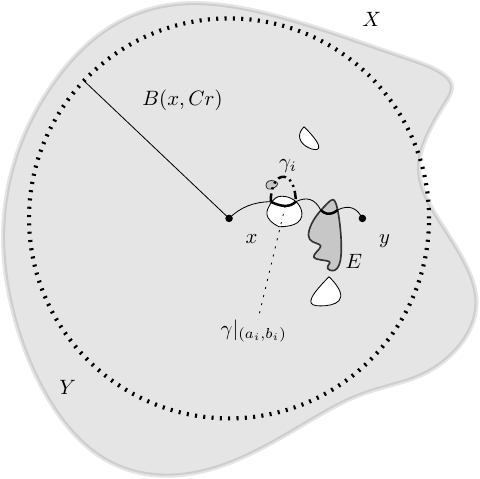}
    \caption{\small Proof of Lemma \ref{lem:prototype}. Connectivity involves constructing a curve in the gray subset $Y$ between a pair of points $x,y$ while avoiding the dark gray subset $E$ as well as possible. The connectivity of $X$ is used to give a ``proto-curve'', whose portions $\gamma|_{(a_i,b_i)}$ in the complement $X \setminus Y$ are replaced by detours $\gamma_i$ constructed using quasiconvexity (the dash-dotted line segment). }%The replacement curve $\gamma'$ now lies entirely in $Y$ but may spend a little more time in the set $E$ since we do not restrict the detours $\gamma_i$ in any way. The total time spent consists of the length of the intersections in the original curve, added with at most the total length of the replacements $\gamma_i$. This procedure of replacing ``bad'' parts of a curve with better ones is the core technique in this paper.}
    \label{fig:replacement}
\end{figure}

Our connectivity property is related to the $(1,p)$-Poincar\'e inequality via the following two theorems. %The proof of these results will be delayed until 
We discuss their applications first in the next section, and their proofs will appear later in 
Section \S\ref{sect:classification}.
 %The first states that a Poincar\'e inequality gives maximal connectivity for some $p$.

\begin{theorem}\label{thm:classification1} Fix %positive 
structural constants $(p,D,C,r_0)$. %$(D,C,r_0)$ and $1 < p < \infty$. 
If $X$ is $D$-doubling at scale $r_0$ and satisfies a $(1,p)$-Poincar\'e inequality at scale $r_0$ with constant $C$, then $X$ is $(C_0,%_{\rm con},
\Delta,p)$-max connected at scale $r_0/2$, %$r_1$, with constant $C_{\rm con}$.  
where %the constants 
$C_0$ %$C_{\rm con}$ depends 
and $\Delta$ depend
solely on the structural constants.%, and $r_1 = r_0/2$. 
\end{theorem}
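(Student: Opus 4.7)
The plan is to combine the pencil-of-curves characterization of the Poincar\'e inequality with a Fuglede-type averaging argument. First, I would invoke the Heinonen--Koskela theorem: doubling together with the $(1,p)$-Poincar\'e inequality at scale $r_0$ produces, for every pair $x,y$ with $r := d(x,y) < r_0/2$, a family $\Gamma(x,y)$ of $1$-Lipschitz curves from $x$ to $y$, each of length at most $C_0 r$ and supported in $B(x, C_0 r)$, whose $p$-modulus satisfies $\mathrm{mod}_p(\Gamma(x,y)) \geq C_0^{-1} \mu(B(x,r)) r^{-p}$ for a constant $C_0$ depending only on the structural constants $(p,D,C)$.

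Next I would translate the maximal-function hypothesis into a measure bound: from $M_{C_0 r}(1_E)(x) < \tau$ and doubling,
$$
\mu(E \cap B(x, C_0 r)) \leq \tau \mu(B(x, C_0 r)) \leq C_0' \tau \mu(B(x, r)).
$$
A Fuglede-type argument then produces the desired curve. If every $\gamma \in \Gamma(x,y)$ satisfied $\int_\gamma 1_E \, ds > \lambda$, then $\rho := \lambda^{-1} 1_{E \cap B(x, C_0 r)}$ would be admissible for $\Gamma(x,y)$, so
$$
\lambda^{-p} \mu(E \cap B(x, C_0 r)) \geq \mathrm{mod}_p(\Gamma(x,y)).
$$
Combining with the previous bounds yields $\lambda \leq \Delta \tau^{1/p} r$ for a structural constant $\Delta$, so taking $\lambda$ just above this threshold produces a curve $\gamma^\ast \in \Gamma(x,y)$ with $\int_{\gamma^\ast} 1_E \, ds \leq \Delta \tau^{1/p} r$; this is the curve demanded by Definition \ref{def:finemax}.

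The main obstacle is the lower bound on $\mathrm{mod}_p(\Gamma(x,y))$, which is the hardest step of the pencil-of-curves characterization and uses the full strength of the Poincar\'e inequality. An alternative and more self-contained route, in the spirit of the author's own \cite{sylvester:poincare,sylvester:keithzhong}, is a dyadic chaining argument: one applies the $(1,p)$-Poincar\'e inequality to Lipschitz approximations of $1_E$ at scales $r 2^{-k}$ to inductively select ``good'' sequences of points whose surrounding balls contain little of $E$, and joins consecutive good points by short curves; the exponent $\tau^{1/p}$ then emerges from a H\"older inequality applied to the telescoping sum of deficits, while the symmetric hypothesis at both $x$ and $y$ is used to anchor the chain from both sides. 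Either strategy is local at scale $r$ within the ball $B(x, C_0 r)$, which is why only the endpoint maximal-function bound is needed and why the valid scale range shrinks from $r_0$ to $r_0/2$.
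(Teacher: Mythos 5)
Your primary route contains a fatal error, not merely a hard step. The asserted lower bound $\mathrm{mod}_p(\Gamma(x,y))\ge C_0^{-1}\mu(B(x,r))\,r^{-p}$ for the ordinary $p$-modulus of the family of curves joining the two \emph{points} $x,y$ is not what Heinonen--Koskela prove (their Loewner estimates concern pairs of nondegenerate continua), and it is false in general: already in $\R^n$ with Lebesgue measure and $p\le n$ (say $p=2$, $n=2$), every curve joining $x$ to $y$ passes through the point $x$, and the $p$-modulus of the family of nonconstant curves through a fixed point is zero, even though $\R^n$ is a $(1,1)$-PI space. So the quantity you need to bound from below vanishes, and the Fuglede step collapses for every $\tau$. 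What the $(1,p)$-Poincar\'e inequality does yield (Keith \cite{keith2003modulus}) is a lower bound of the form $\mathrm{mod}_p\bigl(\Gamma_{x,y};\hat\mu_{x,y}\bigr)\gtrsim d(x,y)^{1-p}$ for the modulus taken with respect to the two-point Riesz-kernel measure $d\hat\mu_{x,y}(z)=\bigl(\tfrac{d(x,z)}{\mu(B(x,d(x,z)))}+\tfrac{d(y,z)}{\mu(B(y,d(y,z)))}\bigr)d\mu(z)$. With that weight your computation can be salvaged: the hypotheses $M_{Cr}1_E(x),M_{Cr}1_E(y)<\tau$ and a dyadic annulus decomposition give $\hat\mu_{x,y}(E\cap B(x,Cr))\lesssim \tau r$, and admissibility of $\lambda^{-1}1_E$ then forces $\lambda\lesssim\tau^{1/p}r$ --- but you must additionally arrange the length bound $\len(\gamma)\le C_0 r$ (for instance by penalizing $\rho$ with a constant of size $(C_0r)^{-1}$), since the unrestricted family $\Gamma_{x,y}$ contains arbitrarily long curves that could avoid $E$ entirely.

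Your fallback chaining sketch points in the right direction but is missing the key device, so as written it is not a proof: joining consecutive ``good'' points by short curves obtained from quasiconvexity gives no control on $\int_\gamma 1_E\,ds$ along those connecting curves, and no mechanism is offered for that. The paper's argument resolves both issues at once. It first upgrades the Poincar\'e inequality to its pointwise (Haj\l asz-type) form $|f(x)-f(y)|\le C r\bigl(M_{Cr}\Lip[f]^p(x)^{1/p}+M_{Cr}\Lip[f]^p(y)^{1/p}\bigr)$ by telescoping over a chain of balls (Theorem \ref{thm:classificationAp}), and then applies this inequality not to $1_E$ but to the penalized path distance $\mathcal{F}_x(z)=\inf_\gamma\int_\gamma(1_E+\tau)\,ds$, which is finite by quasiconvexity (with constant depending only on the structural data) and satisfies $\Lip[\mathcal{F}_x]\le\Lambda(1_E+\tau)$. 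Since $\mathcal{F}_x(x)=0$, the pointwise inequality applied at the pair $(x,y)$ together with the maximal bounds on $1_E$ produces in one stroke a single curve with $\int_\gamma(1_E+\tau)\,ds\lesssim\tau^{1/p}r$, which gives simultaneously the avoidance estimate \eqref{eq:finemax1} and the length bound $\len(\gamma)\le C_0r$. To repair your proposal, either invoke Keith's weighted-modulus characterization (and handle the length restriction) or adopt the $\mathcal{F}_x$ device; the chaining as you describe it does not close.
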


The converse also holds true, but requires a \emph{sufficiently small} value for $\delta$.

\begin{theorem}\label{thm:classification2} 
Fix structural constants $(p,D,C,r_0)$.
%connectivity constants $p>1$, $D \geq 1$, and $r_0,C>0$. 
There exists $\delta_{p,D} > 0$ such that if $X$ is $D$-doubling at scale $r_0$ and $(C,\delta,\tau_0,p)$-max connected at scale $r_0$ for some $\tau_0 \in (0,1)$ and some $\delta \in (0, \delta_{p,D})$, then it also satisfies a $(1,p)$-Poincar\'e inequality at scale $r_0/C_r$ with constant $C_p$, where $C_p,C_r$ 
are independent of scale $r_0$ but depends quantitatively on %$C$, $\delta$, $p$, and 
all the other structural constants, as well as $\delta$ and
$\tau_0$.
\end{theorem}

As emphasized in the notation, the above constant $\delta_{p,D}$ depends only on $p$ and $D$ and no other structural constants.

However, a small parameter value for $\delta$ is not serious; the next %corollary 
result 
assures that such values for $\delta$ always occur at some density level $\tau$ %and 
but
for slightly larger exponents than $p$.

\begin{lemma}\label{lem:deltasmall}
With the same constants as in Theorem \ref{thm:classification1},
let $X$ be a $D$-doubling metric measure space that is $(C,\Delta,p)$-max connected at scale $r$, and let %$q>p\geq 1$.  
$q>p$.
For each $\delta \in (0,1)$, there exists %$\tau_0=\tau_0(q,\delta)$ in $(0,1)$ 
$\tau_0=\tau_0(q,\delta) \in (0,1)$ 
so that $X$ is $(C,\delta,\tau',q)$-max connected at scale $r$ for any $\tau'\in (0,\tau_0)$.
\end{lemma}

\begin{proof} Choose $\tau_0(q,\delta) = \min\{1,\left(\frac{\delta}{\Delta} \right)^{\frac{pq}{q-p}}\}$ and $\tau'\in(0,\tau_0(q,\delta))$. We will show $(C,\delta,\tau',q)$-max connectivity Let $x,y,E$ be as in the Definitions %of connectivity 
\ref{def:finemax} and \ref{def:maxconnlevel} at scale $r$, i.e. $d(x,y) < r$ and
 \begin{eqnarray*}
M_{Cd(x,y)}(1_E)(x) &<& \tau'
\\
M_{Cd(x,y)}(1_E)(y) &<& \tau'.
\end{eqnarray*}
By $(C,\Delta,p)$-max connectivity, there is a curve $\gamma$ connecting $x$ to $y$ with length at most $Cd(x,y)$ and with
\[
\int_\gamma 1_E \,ds \leq \Delta \tau'^{\frac{1}{p}}d(x,y).
\]
By our choice of $\tau_0$, we have 
$
\Delta \tau'^{1/p} = 
(\Delta \tau'^{1/p-1/q}) \tau'^{1/q} \leq 
\delta \tau'^{1/q}
$, and thus we also have
\[
\int_\gamma 1_E \,ds \leq \delta \tau'^{\frac{1}{q}}d(x,y),
\]
and in particular, $\gamma$ already verifies the $(C,\delta,\tau',q)$-max connectivity condition.
\end{proof}

To reiterate, to prove that a $p$-fillable subset $Y$ satisfies a $(1,p)$-Poincar\'e inequality,
by Theorem \ref{thm:classification2} it is sufficient to prove the maximal connectivity property for $Y$ at a certain level and for fixed choices $C$ and %$\delta<\delta_0$.
{%\color{blue}
$\delta<\delta_{p,D}$.%
}
Similarly as in Lemma \ref{lem:prototype}, this %connectivity
property will
%\sout{be shown to}
be `inherited' from a filling $\Omega_r$ at a comparable scale. %This strategy will be executed as follows.

With these general statements at hand,
%Now,
we will employ the following strategy for the proof of Theorem \ref{thm:PIthm}:
\begin{enumerate}
\item 
Theorem \ref{thm:classification1} %provides fillings $\Omega_r$ that satisfy 
guarantees that any filling $\Omega_r$ of $Y$ will satisfy
maximal connectivity properties with exponent $p$ and \emph{some} initial parameter $\Delta$. 
\vspace{.05in}
\item 
From Lemma \ref{lem:deltasmall} we obtain $(C,\delta,\tau_0,q)$-maximal connectivity for $\Omega_r$ at scale $r$ for \emph{arbitrarily small} parameters $\delta$, but at the expense of a slightly larger exponent $q$.
\vspace{.05in}
\item 
Similarly to Lemma \ref{lem:prototype}, due to quasi-convexity 
(see Lemma \ref{lemma:quasiconvex} below)
$Y$ inherits the maximal connectivity property from its filling $\Omega_r$, but with %a slightly larger $\delta'$, 
$\delta'$ slightly larger than $\delta$.
This parameter $\delta'$ can be ensured to be less than %a given threshold $\delta_0$, 
the given threshold $\delta_{p,D}$,
however, by an initially small choice of $\delta$ in the previous step. 
\vspace{.05in}
\item 
Using maximal connectivity and quasiconvexity (again), we show $Y$ satisfies a $(1,q)$-Poincar\'e inequality via Theorem \ref{thm:classification2}. %We need the exponent $q>p$
\end{enumerate}
Here $q>p$ is needed 
to apply the argument from Lemma \ref{lem:deltasmall}. If $p>1$, this could be avoided via Keith-Zhong \cite{keith2008poincare}, since we could first improve the Poincar\'e inequality for each $\Omega_r$  to an exponent $p'<p$.

\section{Proof that ``fillable'' sets satisfy Poincar\'e inequalities} \label{sec:proof}

\subsection{Initial geometric considerations}
%\subsection{Measure-theoretic considerations}

Now, we show that the underlying (restricted) measure of a fillable subset is well-behaved. More precisely, we show that a fillable subset $Y$ inherits the doubling property from its fillings $\Omega_r$. Recall 
that throughout this paper,
$Y \subset \Omega_r \subset X$, where $\Omega_r$ will be the relevant fillings.
%throughout.

\begin{lemma}\label{lemma:doubling}
Fix structural
constants $(p,D,C,%\sout{$C',\Lambda,$}
r_0)$. If $Y$ is $(\epsilon,D,C,p)$-PI fillable
up to scale $r_0$
for some $\epsilon \in (0,1)$,
then $Y$ %itself 
is $(\frac{D}{1-\epsilon},r_0)$-doubling.
\end{lemma}

%\begin{note}[\underline{Jasun, 27 Mar 2018}]
%{\edit
%As a convention, we can refer to constants as `structural constants' if they describe parameters for standard hypotheses/conditions.
%}
%\end{note}

%\begin{note}[\underline{Jasun, 22 Mar 2018}]
%{\edit
%Wait:\ if $\epsilon_D=1/2$ as in the proof below and if $\epsilon \in (0,1)$, then why does $\epsilon_D$ depend on doubling?
%}
%\end{note}

\begin{proof}%[Proof of Lemma \ref{lemma:doubling}]
Let $r \in (0, %\frac{1}{C}}
r_0)$ and $x \in Y$. %\sout{, and put $\rho = Cr$}.
From item (2) of Definition \ref{def:PIfilling}, we have
\begin{eqnarray}
\notag
\mu\big( \Omega_r \cap B(x, r%\rho
) \big)
&=&
\mu\big( \Omega_r \cap B(x,{r}%\rho
) \cap Y \big) +
\mu\big( \Omega_r \cap B(x,{r}%\rho
) \setminus Y \big)
\\ &<& %&\stackrel{(2)}{<}&
\notag
\mu\big( Y \cap B(x,{r}%\rho
) \big) +
\epsilon \, \mu\big( \Omega_r \cap B(x,{ r}%\rho
) \big)
\\
\notag %\label{eq:doubling2}
\therefore (1-\epsilon) \, \mu\big( \Omega_r \cap B(x,{ r}%\rho
) \big) &<&
\mu\big( Y \cap B(x,{ r}%\rho
) \big).
\end{eqnarray}
Since $\Omega_r$ is
%\sout{a}
assumed
$D$-doubling
%\sout{metric measure space}
with respect to the restricted measure $\mu|_{\Omega_r}$ and since
%\sout{, by definition,}
$Y$ is a subset of $\Omega_r$, %by Definition \ref{def:PIfilling}, 
it follows that
$$
\mu\big( Y \cap B(x,2 {r}%\rho
) \big) \leq
\mu\big( \Omega_r \cap B(x,2{ r}%\rho
) \big) \leq
D \, \mu\big( \Omega_r \cap B(x,{r}%\rho
) \big) \leq %\stackrel{\eqref{eq:doubling2}}{\leq}
\frac{D}{1-\epsilon} \mu\big( Y \cap B(x,{ r}%\rho
) \big).
$$
So the claim follows with doubling constant  $\frac{D}{1-\epsilon}$.
\end{proof}

We next show that PI-fillable subsets $Y$ %\sout{of metric spaces$Y$} 
are quasiconvex. 
%\sout{well connected, in a quantitative sense.  The well connectedness is derived using the local well connectedness} 
This connectivity property is derived from stronger ones, i.e. the Poincar\'e inequalities of the fillings $\Omega_r$.
%\marginpar{\tt\tiny notation moved \\ earlier, here.}
For clarity later, given $f \in L^1(X)$ and $R>0$ we specify the choice of metric space for maximal functions by using the shorthand
\begin{eqnarray*}
M^r_Rf(x) &:=&
\sup_{\rho \in (0,R)} \avint_{B(x,\rho) \cap \Omega_r} |f| ~d\mu
\\
M^0_Rf(x) &:=&
\sup_{\rho \in (0,R)} \avint_{B(x,\rho) \cap Y} |f| ~d\mu,
\end{eqnarray*}
where $\Omega_r$ is as in Definition \ref{def:finemax}.
%\sout{The usual symbol for maximal functions over (all of) $Y$ remains
%$$
%M_Rf(x) :=
%\sup_{\rho \in (0,R)} \avint_{B(x,\rho)} |f| ~d\mu.
%$$
%}

\begin{lemma} \label{lemma:quasiconvex}
%Let $p > 1$ and 
Fix %positive 
structural constants %$(D,C,p) \in [1,\infty)^2$  and $r_0>0$. %$(D,C,\textcolor{red}{\delta},r_0)$.
$(p,D,C,r_0)$.
There exist $\epsilon_0, \Lambda, r_1> 0$,
depending solely on the structural constants, 
so that if $Y$ is a $(\epsilon,D,C,p)$-PI fillable subset of a metric space $X$
%\sout{up to} 
at scale $r_0$,
for some $\epsilon \in (0,\epsilon_0)$, then it is
$\Lambda$-quasiconvex at scale $r_1$. 
%Here the constants $\Lambda, \epsilon_0, r_1$ depend solely on the structural constants$(D,C,p,r_0)$. 
\end{lemma}

\begin{proof} By hypothesis, $Y$ is $(\epsilon,D,C,p)$-fillable up to scale $r_0$, for some $\epsilon \in (0,\epsilon_0)$, so there exist fillings $\Omega_r$ for every $r \in (0,r_0)$ with $Y \subset \Omega_r \subset X$  that  are $D$-doubling at scale $2r$, that support a $(1,p)$-Poincar\'e inequality at scale $r$ with constant $C$, and so that
$$
\frac{ \mu(\Omega_r \cap B(z,r) \setminus Y) }{ \mu(\Omega_r \cap B(z,r)) } < \epsilon < \epsilon_0
$$
holds for all $z \in Y$. From Theorem \ref{thm:classification1} we conclude that the fillings $\Omega_r$ are %\sout{uniform and local} 
$(C_0,\Delta,p)$-max connected for some $C_0$ and $\Delta$ at scale $r/2$. %From this connectivity, we derive the existence of appropriate quasi-geodesics. 
Choose $\tau_0 = \frac{1}{\Delta^p 4^p}$ %, so %. In particular, 
so that 
$\Delta\tau_0^{1/p}r \leq r/4$
and fix
$$
\epsilon_0 ~=~ D^{-( 10+\lceil\log_2(C_0))\rceil)}\tau_0
\text{ and }
\Lambda = 2C_0
\text{ and }
r_1 = \frac{r_0}{2^5 C_0}
.
$$
%Set $r_1 = \frac{r_0}{2C_0}$ and
%$$
%\Lambda = 2C_0.
%$$
Since $C_0$ and $\Delta$ depend only on the structural constants, by Theorem \ref{thm:classification1}, the same is true of $\epsilon_0$, $\Lambda$, and $r_1$.
%Since $\Lambda,\epsilon_0$ depend solely on $D,p,C_0$, which depend via Theorem \ref{thm:classification1} on the structural constants, the desired dependence is obtained. 

We now show that $Y$ is $\Lambda$-quasiconvex at scale $r_1$.
For every $x,y \in Y$ with $r=d(x,y) < r_1$. %, and our task is to 
we will construct a $\Lambda$-quasi-geodesic joining
$x$ and $y$, %. This will be done by 
using a recursive argument.

%Let $x,y \in X$ with $r := d(x,y) \in (0, r_0)$.

\vspace{.05in}

\textit{Base case(s)}.\ Fix $R =  2^5 C_0 r$. The initial curve will be constructed in $\Omega_R$ 
and will lie almost entirely in $Y$.
%to almost avoid $Y \setminus X$, that is to almost lie in $X$.  
To begin, define
an obstacle
$$
E ~:=~
X \setminus (Y  \cup \overline{B(x,r/16)} \cup \overline{B(y,r/16)}).
$$
In particular, this implies for $\rho \in (\frac{1}{16}r, R)$ that
$$
\frac{ \mu(\Omega_{ R%r
} \cap B(x,\rho) \cap E) }{ \mu(\Omega_{ R%r
} \cap B(x,\rho)) } ~\leq~
\frac{ \mu(\Omega_{ R%r
}\cap B(x,{ R}%2Cr
) \setminus Y) }{ \mu\big(\Omega_{ R} \cap B(x,\frac{1}{16}r)\big) } ~\leq~
D^{10+\lceil\log_2(C_0)\rceil} \frac{ \mu(\Omega_{ R} \cap B(x,{ R}) \setminus Y) }{ \mu\big(\Omega_{ R} \cap B(x,{ R})\big) }
$$
and since
$\mu(\Omega_{R} \cap B(x,\rho) \cap E) = 0$
holds whenever $\rho \in (0,\frac{1}{16}r)$, it follows that
\begin{equation} \label{eq:maxconnfillable}
M_{C_0r}^R 1_E(z) < D^{10+\lceil\log_2(C_0)\rceil} \epsilon < \tau_0
%\text{ and }
%M_{C_0r}^R 1_E(y) < D^{10+\lceil\log_2(C_0)\rceil} \epsilon < \tau_0.
\text{ for } z = x,y.
\end{equation}
For future consistency of notation, put $x_{1,1} := x$ and $y_{1,1} := y$ and $x_{i,1} := y_{i,1} := y$ for $i \geq 2$. 
Also define $r_{i,1}=d(x_{i,1},y_{i,1})$, in which case
$$
\sum_i r_{i,1} = r_{1,1} ~\leq~ r.
$$
Recall that $\Omega_{R}$ is $(C_0, \Delta,p)$-max connected at scale $R/2>r$.  By applying Definition \ref{def:finemax} to $E$, Equation \eqref{eq:maxconnfillable} guarantees the existence of a $C_0$-quasi-geodesic $\gamma_1: [0,L_1] \to \Omega_R$, for some length\footnote{
Recall the convention that all rectifiable curves are assumed to be parametrized with respect to arc-length, unless otherwise specified. The only time below that we will need this we will indicate such curves by an asterisk.
}
$L_1 \in (0,C_0r)$, joining $x$ and $y$ in $\Omega_{R} \subset X$, and so that
\begin{equation} \label{eq:obstaclelength1}
\int_{\gamma_1} 1_{E} ~ds \leq \Delta\tau_0^{1/p}r \leq r/4.
\end{equation}
%\marginpar{\tt\tiny \textcolor{red}{ \hspace{.6in} edited $\epsilon$ }}
%\marginpar{\tt\tiny \textcolor{red}{ \hspace{.6in} into $\tau_0$. }}
Consider the exit times
%Let
\begin{eqnarray*}
t_{1,1} &:=& \sup\big\{t \in [0,L_1] ~|~ d(\gamma_1(t), x) \leq r /8 \big\}
\\
T_{1,1} &:=& \inf\big\{t \in [0,L_1] ~|~ d(\gamma_1(t), y) \leq r /8 \big\}.
\end{eqnarray*}
Since $Y$ is closed, the set $E$ is open and it follows that $\gamma_1^{-1}(E) \cup (0,t_{1,1}) \cup (T_{1,1}, L_1)$ is open in $\R$, so it is a countable union of open intervals
$$
(0,t_{1,1}) \cup (T_{1,1}, L_1) \cup \gamma_1^{-1}(E) ~=~
\bigcup_{i=1}^\infty (a^i,b^i)
$$
with $a^i \leq b^i$ and where each pair $x_{i,2} := \gamma_1(a^i)$ and $y_{i,2} := \gamma_1(b^i)$, of distance
$r_{i,2} := d(x_{i,2},y_{i,2})$
apart, also lie in $Y$.  (If the union is finite, then there exists $N \in \N$ so that $a^n=b^n$ for $n \geq N$.) Also,
\begin{equation} \label{eq:obstacleavoid}
\gamma_1^{-1}(X \setminus Y) \subset \bigcup_{i=1}^\infty (a^i,b^i).
\end{equation}
Equation \eqref{eq:obstaclelength1} thus implies that
\begin{eqnarray*}
\sum_i r_{i,2} &\leq&
{\rm len}(\gamma_1 \cap \Omega_r \setminus Y) + \frac{r}{4} ~\leq~
\Big(\Delta\tau_0^{1/p} + \frac{1}{4}\Big)r ~\leq~
\frac{r}{2}.
\end{eqnarray*}
Since $\gamma_1$ is parametrized by length, and $\len(\gamma_1) \leq \Lambda r=\Lambda r_{1,1}$, it trivially holds that 
\begin{equation} 
\gamma_{1} \setminus Y \subset \bigcup_{i=1}^\infty B(x_{i,1}, \Lambda r_{i,1}).
\end{equation}

\textit{Recursive step}.\
Let $k \in \N$ be given, with $k \geq 2$, and suppose the sequence $\big( (x_{j,k},y_{j,k}) \big)_{j=1}^\infty$ in $Y \times Y$ has already been defined, with $r_{j,k} := d(x_{j,k},y_{j,k}) < r_0$ and with the property that
\begin{equation} \label{eq:gaplengthk}
\sum_j r_{j,k} \,\leq\, 2^{1-k}r.
\end{equation}
Assume further that a $C_k$-quasi-geodesic $\gamma_{k-1}: [0,L_{k-1}] \to X$ joining $x$ and $y$ has already been defined for some $L_{k-1} \in (0,C_{k-1}r)$, where
%and hence
\begin{equation} \label{eq:curvelengthold}
%{\rm len}(\gamma_{k-1}) ~=~ L_{k-1} ~\leq~
%2(1-2^{-(k-1)})C_0 r
C_{k-1} ~:=~ 2(1-2^{-(k-1)})C_0
\end{equation}
and with the property that there exist $\{ a_{k-1}^j, b_{k-1}^j \}_{j=1}^\infty \subset [0,L_{k-1}]$
with
$$
x_{j,k} ~=~ \gamma_{k-1}(a_{k-1}^j)
\text{ and }
y_{j,k} ~=~ \gamma_{k-1}(b_{k-1}^j)
\text{ and } r_{j,k}=d(x_{j,k},y_{j,k})
$$
%\text{ and }
%{\color{blue} a_{k-1}^j ~\leq~ b_{k-1}^j ~\leq~ a_{k-1}^{j+1}}
% \begin{note}[\emph{Sylvester May 11th 2018}]
% {\edit It seems we can not always index so that $a_{k-1}^j ~\leq~ b_{k-1}^j ~\leq~ a_{k-1}^{j+1}$. The problem is that while the $a_{k-1}$ are well ordered at everly level, there may have some accumulation points. They might not be ordered by $\N$. If really desired, we could use some abstract indexing set $I$ with an order defined on it. Or we could just remove this ordering all together, as it seems unnecessary. UPDATE August 9th: Commented out.}
% \end{note}
and which satisfies the avoidance properties
\begin{align} \label{eq:obstacleavoidk}
\gamma_{k-1}^{-1}(X \setminus Y) ~\subset&~ \bigcup_{j=1}^\infty (a^{j}_{k-1},b^{j}_{k-1}),
\\
\label{eq:obstacleavoidkbetter}
\gamma_{k-1} \setminus Y ~\subset&~ \bigcup_{j=1}^\infty B(x_{j,k-1},\Lambda r_{j,k-1}).
\end{align}
By applying the same argument as in the base case, with $x_{j,k}$ and $y_{j,k}$ and $r_{j,k}$ in place of $x$ and $y$ and $r$, take fillings $\Omega_{j,k} := \Omega_{2^5C_0r_{j,k}}$ of $Y$ that are $(C_0,\Delta,p)$-max connected at scales $2^4C_0r_{j,k}$.  Using obstacles
$$
E_{j,k} ~:=~
X \setminus (Y \cup \overline{B(x_{j,k},r_{j,k}/16)} \cup \overline{B(y_{j,k},r_{j,k}/16)}).
$$
and estimating similarly as \eqref{eq:maxconnfillable}, there exist
$C_0$-quasigeodesics $\gamma_{j,k}: [0,L_{j,k}] \to \Omega_{j,k} \subset X$ joining $x_{j,k}$ to $y_{j,k}$ in $\Omega_{j,k}$, so that
\begin{equation} \label{eq:obstaclelengthk}
\sum_{j=1}^\infty \int_{\gamma_{j,k}} 1_{E_{j,k}} ~ds ~\leq~
\sum_{j=1}^\infty \Delta\tau_0^{1/p} r_{j,k} ~\stackrel{\eqref{eq:gaplengthk}}{\leq}~
2^{-1-k}r
\end{equation}
and whose lengths $L_{j,k} \leq C_0 r_{j,k}$ satisfy
\begin{equation} \label{eq:gaplengthk+1}
\mathcal{H}^1\Big( \bigcup_j \gamma_{j,k} \Big) ~\leq~ \sum_{j} L_{j,k} %= 
%\sum_j {\rm len}(\gamma_{j,k}) 
~\leq~
\sum_j C_0 r_{j,k} ~\stackrel{\eqref{eq:gaplengthk}}{\leq}~
2^{1-k}C_0r.
\end{equation}
As before, for each $j \in \N$ set exit times
\begin{eqnarray*}
t_{j,k} &:=& \sup\big\{t \in [0,L_{j,k}] ~|~ d(\gamma_{j,k}(t), x_{j,k}) \leq r_{j,k}/8 \big\}
\\
T_{j,k} &:=& \inf\big\{t \in [0,L_{j,k}] ~|~ d(\gamma_{j,k}(t), y_{j,k}) \leq r_{j,k}/8 \big\}.
\end{eqnarray*}
The preimage $\gamma_{j,k}^{-1}(X \setminus Y)$ is open in $\R$ and satisfies
$$
(0,t_{j,k}) \cup
(T_{j,k},L_{j,k}) \cup
\gamma_{j,k}^{-1}(X \setminus Y) ~=~
\bigcup_{l=1}^\infty (a_{j,k}^{l*},b_{j,k}^{l*})
$$
for sequences of pairs $a_{j,k}^{l*} \leq b_{j,k}^{l*}$.  Reindexing $i = i(j,l)$ as needed, put
\begin{eqnarray*}
x_{i,k+1} &:=& \gamma_{j,k}(a_k^{i*}), \text{ where } a_k^{i*} ~:=~ a_{j,k}^{l*}
\\
y_{i,k+1} &:=& \gamma_{j,k}(b_k^{i*}), \text{ where } b_k^{i*} ~:=~ b_{j,k}^{l*}
\\
r_{i,k+1} &:=& d(x_{i,k+1},y_{i,k+1}).
\end{eqnarray*}
Based on \eqref{eq:gaplengthk} and \eqref{eq:obstaclelengthk} and our choice of $t_{j,k}$ and $T_{j,k}$, it holds that
%these are points in $X$ whose distances $r_{i,k+1} := d(x_{i,k+1},y_{i,k+1})$ satisfy
\begin{eqnarray*}
\sum_{i=1}^\infty r_{i,k+1} &=&
\sum_{j=1}^\infty \sum_{l=1}^\infty d( \gamma_{j,k}(a_{j,k}^{l*}), \gamma_{j,k}(b_{j,k}^{l*}) ) \\ &\leq&
\sum_{j=1}^\infty \Big(
\frac{r_{j,k}}{4} + \int_{\gamma_{j,k}} 1_{E_{j,k}} ~ds
\Big) ~\leq~
\frac{1}{4}\cdot 2^{-(k-1)}r + 2^{-1-k}r ~\leq~
2^{-k}r.
\end{eqnarray*}
%Similarly as before, put
%$$
%\tilde\gamma_{k+1} :=
%\bigcup_j (\gamma_{j,k} \cap X) =
%\bigcup_j \gamma_{j,k}\Big(
%(0, L_{j,k}) \setminus \bigcup_l (a_{j,k+1}^l,b_{j,k+1}^l)
%\Big).
%$$
Towards a new curve, consider sub-curve lengths
\begin{eqnarray*}
L_{k-1}^j &:=&
{\rm len}( \gamma_{k-1}\big([a_{k-1}^{j},b_{k-1}^{j}]\big) ) %~\leq~
%C_0 r_{j,k}
\\
L_k^* &:=&
{\rm len}(\gamma_{k-1}) +
\sum_{j=1}^\infty ( L_{j,k} - L^j_{k-1} ).
\end{eqnarray*}
for all $j$ and $k$.  %Observing that
%\marginpar{\tt\tiny \hspace{.5in} omit this}
%\marginpar{\tt\tiny \hspace{.5in} later}
%$$
%t \in [a_{k-1}^j,b_{k-1}^j]
%~\Longrightarrow~
%t-a_{k-1}^j \in [0,b_{k-1}^j-a_{k-1}^j]
%~\Longrightarrow~
%\frac{ L_{j,k} }{ b_{k-1}^j-a_{k-1}^j } (t-a_{k-1}^j) \in [0,L_{j,k}],
%$$
We further define a parametrization for a curve 
%by fitting in the $\gamma_{j,k}$ into the intervals $[a^j_{k-1},b^j_{k-1}]$ as
of length $L^*_k$, and where each $\gamma_{j,k}$ replaces $\gamma_{k-1}|_{[a^{j}_{k-1},b^{j}_{k-1}]}$, as follows:
$$
\gamma_k^*(t) :=
\begin{cases}
\gamma_{j,k} \Big(
\frac{ L_{j,k} }{ b_{k-1}^{j}-a_{k-1}^{j} } \big(t-a_{k-1}^{j}\big)
\Big), &
\text{ if } t \in [a_{k-1}^{j},b_{k-1}^{j}] \text{ for some } j \in \N,
\\
\gamma_{k-1}(t), &
\text{ otherwise.}
\end{cases}
$$
Let $\gamma_k$ be the arclength parametrisation of $\gamma_k^*$. Let $a_{k}^j,b_k^j$ correspond to $a_k^{j*}, b_k^{j*}$ under this reparametrization. %This curve has length $L^*_k$. Then, times $t \in (a_{j,k},b_{j,k})$ %$t=a_{i,k}=a_{j,k}^l$, and $t=b_{i,k}=b_{j,k}^l$ in the parametrization of each $\gamma_{j,k}$ correspond to the times $T = a_{k-1}^j + \frac{ b_{k-1}^j-a_{k-1}^j }{ L_{j,k} }t$ in the parameterization of $\gamma_k^*$. %After reparametrization by length they correspond to some $a_k^i,b_k^i$, respectively. 
By Equation \eqref{eq:obstacleavoidk}, $\gamma_{k-1}(t)$ can only lie in $X \setminus Y$ whenever $t \in[a_{k-1}^j,b_{k-1}^j] \text{ for some } j \in \N$, i.e where the images of $\gamma_{j,k}$ and $\gamma_k$ agree.  With the same reindexing $i=i(j,l)$, this gives the avoidance property
\begin{equation} 
\gamma_{k}^{-1}(X \setminus Y) \subset \bigcup_{i=1}^\infty (a^i_{k},b^i_{k})
\end{equation}
and since the $\gamma_{j,k}$ have length at most $\Lambda r_{j,k}$, the other avoidance property follows:
\begin{equation} 
\gamma_{k}  \setminus Y \subset \bigcup_{i=1}^\infty B(x_{i,k}, \Lambda r_{i,k}).
\end{equation}
% and integration then gives
% \begin{equation} \label{eq:obstaclelengthkadditional}
% \int_{\gamma_k} 1_{Y \setminus X} ~ds = \int_{\gamma_k^*} 1_{Y \setminus X} ~ds \leq  
% \sum_{j=1}^\infty \int_{\gamma_{j,k}} 1_{E_{j,k}} ~ds + ,
% \end{equation}
% since, 
From \eqref{eq:gaplengthk} and \eqref{eq:curvelengthold} it follows that
\begin{eqnarray}
\notag
{\rm len}(\gamma_k) &\leq& L^*_k ~\leq~
{\rm len}(\gamma_{k-1}) + \sum_{j=1}^\infty L_{j,k} \\ &\leq&
\notag
2(1-2^{-(k-1)})C_0 r + C_0 \sum_{j=1}^\infty r_{j,k} ~\\ &\leq&
\label{eq:curvelengthnew}
\frac{1-2^{-(k-1)}}{1-\frac{1}{2}}C_0 r + 2^{-(k-1)}C_0 r ~=~
\frac{1-2^{-k}}{1-\frac{1}{2}}C_0 r ~=~
2(1-2^{-k})C_0 r.
\end{eqnarray}
By construction, for each $k \in \N$ there exists $j_1,j_2\in \N$ so that $x = a_{k-1}^{j_1}$ and $y = b_{k-1}^{j_2}$, so $\gamma_k$ therefore joins $x$ and $y$.  By the previous estimate, it is therefore a $C_k$-quasigeodesic with
$$
C_k ~:=~ 2(1-2^{-k})C_0
$$
which completes the induction step.

\vspace{.05in}

\textit{ A limiting curve}.\ Putting $\gamma_k'(t) := \gamma_k\big( \frac{\Lambda r}{L_k}t \big)$, it follows that $\{ \gamma_k' \}_{k=1}^\infty$ is a family of $1$-Lipschitz functions on $[0,{ \Lambda r}]$, each joining $x$ to $y$.  By the Arzel\'a-Ascoli theorem, there therefore exists a sublimit function $\gamma: [0,{ \Lambda r}] \to X$ that is $1$-Lipschitz and joins $x$ and $y$. Since $\gamma$ is $1$-Lipschitz we obtain
$$\len(\gamma) \leq \Lambda r \leq \Lambda d(x,y),$$
and $\gamma$ is the desired $\Lambda$-quasigeodesic connecting $x$ to $y$.

%\begin{note}[Sylvester May 11th]
% This part can be done easier since $\gamma$ is $1$-Lipschitz, as in the previous sentence. So the following could be commented out.

%\begin{note}[\underline{Jasun, 18 May 2018}]
%{\edit
%Agreed.\ It's been commented out.
%}
%\end{note}

%Moreover, applying lower semi-continuity with \eqref{eq:curvelengthnew} we obtain
%$$
%L ~:=~
%{\rm len}(\gamma) ~\leq~
%\sum_k {\rm len}(\tilde\gamma_k) ~\leq~
%\sum_k C(2)^{1-k}r ~=~
%2Cr ~<~
%\infty
%$$
%and therefore $\gamma$ is a $\Lambda$-quasigeodesic, with $\Lambda = 2C$.

We lastly claim that $\gamma([0,L]) \subset Y$.  %By definition of $E_{j,k}$, \eqref{eq:obstaclelengthk} 
From the inclusion \eqref{eq:obstacleavoidkbetter} and the estimate \eqref{eq:gaplengthk}, %we have for 
the Hausdorff $1$-content %$\mathcal{H}_\infty^1$ that
of $\gamma_k \setminus Y$ satisfies
\begin{eqnarray*}
\mathcal{H}_\infty^1(\gamma_k \setminus Y) ~\leq~ %&\leq&
\mathcal{H}_\infty^1( \gamma_k \cap \bigcup_{j=1}^\infty B(x_{j,k}, \Lambda r_{j,k})) ~\leq~ %\\ &\leq&
%\mathcal{H}_\infty^1( \gamma_k \cap \bigcup_{j=1}^\infty E_{j,k} ) +
%\mathcal{H}_\infty^1( \gamma_k \cap \bigcup_{j=1}^\infty \big( B(x_{j,k}, r_{j,k}/16) \cup B(y_{j,k}, r_{j,k}/16) \big) ) \\ &\leq&
2^{1-k} \Lambda r %+ \frac{1}{8}\sum_{j=1}^\infty r_{j,k} ~=~
%2^{2-k}r 
%\to 0
\end{eqnarray*}
and therefore vanishes, 
as $k \to \infty$; %, so we 
we therefore
conclude $\mathcal{H}^1(\gamma \setminus Y) = 0$ since $\gamma$ is continuous and $Y$ is closed. Indeed, if $\gamma$ spent any time in the complement of $Y$, then by continuity, the Hausdorff content of $\gamma_k \setminus Y$ would have a definite lower bound for large $k$, contradicting the previous limit calculation.
%$\mathcal{H}^1(\gamma \setminus X) = 0$ from lower-semicontinuity (since $X$ is closed) and $\gamma_k \to \gamma$. Finally, \sout{and} we obtain $\gamma \setminus X = \emptyset$ from continuity of $\gamma$.
%\end{note}
\end{proof}

\subsection{Proof of Theorem \ref{thm:PIthm}, Part (a)}

%The original theorem statement is:\
%\textit{
%fix positive constants $(D,C,C',\Lambda,p,r_0)$ with $p \geq 1$ and let $Y$ be a $D$-doubling metric measure space.
%}
%\begin{itemize}
%\item[(a)]
%\textit{
%For every $q > p$, there exists $\epsilon_0>0$ such that if $X$ is a $(\epsilon,r_0,p)$-PI fillable subset of $Y$ for some $0<\epsilon<\epsilon_0$, then it satisfies a local $(1,q)$-Poincar\'e inequality at scale $r_0/C$.
%}
%%\item[(b)]
%Further, if $X$ is asymptotically $p$-PI fillable, then it satisfies a local $(1,q)$-Poincar\'e inquality for every $q > p$ at scale $r_0/C$.
%\end{itemize}

In light of Theorem \ref{thm:classification2}, it suffices to prove the following statement instead
 of the original statement of Theorem \ref{thm:PIthm}:\

\begin{theorem} \label{thm:mainresult2}
%and $r_0 > 0$,
Let $X$ be a %$D$-doubling
metric measure space, %and let
fix %positive 
structural constants $(p,D,C,%C',\Lambda,
r_0)$, %with $p \geq 1$,
and let $\delta%_X
>0$ be arbitrary. 
For every $q>p$, there exist $\epsilon_q, \tau%_{X} 
\in (0,1)$, $C'%_X 
\geq 1$ and $r_1\in (0,r_0)$,
such that
if $\epsilon \in (0,\epsilon_q)$ then every $(\epsilon,D,C,p)$-fillable subset $Y$ of $X$ up to scale $r_0$ is
\begin{enumerate}
\item $2D$-doubling at
scale $r_1$, and
\item $(C'%_X 
,\delta%_X
,\tau%_X
,q)$-max connected at scale $r_1$.
%\sout{level $\tau_X$ and} 
\end{enumerate}
\end{theorem}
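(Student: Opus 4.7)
The plan is to execute the strategy outlined after Corollary~\ref{cor:deltasmall}: verify doubling directly, then transfer maximal connectivity from the fillings $\Omega_r$ to $X$ by using the quasiconvexity established in Lemma~\ref{lemma:quasiconvex}. Item~(1) is immediate from Lemma~\ref{lemma:doubling}: choosing $\epsilon_1 \leq 1/2$ (to be further shrunk) forces $D/(1-\epsilon_1) \leq 2D$. The heart of the argument is Item~(2), which I would prove by reducing to the already-established maximal connectivity of the fillings.

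First I would fix $q>p$ and choose all constants in reverse. By Theorem~\ref{thm:classification1}, each filling $\Omega_r$ is $(C_0,\Delta,p)$-max connected at a scale comparable to $r$, with $C_0, \Delta$ depending only on the structural data. Corollary~\ref{cor:deltasmall} then upgrades this to $(C_0,\delta',\tau_0,q)$-max connectivity on $\Omega_r$ for any prescribed $\delta' \in (0,1)$, where $\tau_0 = \tau_0(q,\delta')$. Given the target $\delta$ in the statement, I would pick $\delta'$ so that $(1+\Lambda)\,2^{1/q}\delta' \leq \delta$, where $\Lambda$ is the quasiconvexity constant from Lemma~\ref{lemma:quasiconvex}; then set $\tau := \tau_0/2$, take $r_1$ a small multiple of $r_0$, and further shrink $\epsilon_1$ to $\min\{\tau_0/2,\epsilon_0\}$, where $\epsilon_0$ is the threshold from Lemma~\ref{lemma:quasiconvex} ensuring $\Lambda$-quasiconvexity of $X$ on scale $r_1$.

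Next, given $x,y\in X$ with $r:=d(x,y)<r_1$ and a Borel $E\subset X$ satisfying $M^0_{C'r}(1_E)(z)<\tau$ for $z\in\{x,y\}$, I would work inside $\Omega_R$ with $R$ a fixed multiple of $r$ (say $R = 2^5 C_0 r$) and introduce the enlarged obstacle $\widetilde{E} := E\cup(\Omega_R\setminus X)$. Item~(2) of Definition~\ref{def:PIfilling}, combined with the assumption on $E$ and the crude inequality $\mu(B(z,\rho)\cap X)\leq \mu(B(z,\rho)\cap \Omega_R)$, yields
\[
M^R_{C_0r}(1_{\widetilde{E}})(z) \leq \tau + \epsilon_1 \leq \tau_0.
\]
Max connectivity of $\Omega_R$ at level $\tau_0$ then produces a $C_0$-quasigeodesic $\gamma$ in $\Omega_R$ from $x$ to $y$ with $\int_\gamma 1_{\widetilde{E}}\,ds \leq \delta'\tau_0^{1/q} r$.

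Finally, as in Lemma~\ref{lem:prototype} and the inductive construction in Lemma~\ref{lemma:quasiconvex}, I would replace each maximal sub-arc of $\gamma$ lying in $\Omega_R\setminus X$ by a $\Lambda$-quasigeodesic in $X$ joining its endpoints, producing a curve $\gamma'\subset X$. The added length is at most $\Lambda\int_\gamma 1_{\Omega_R\setminus X}\,ds\leq \Lambda\delta'\tau_0^{1/q}r$, so $\len(\gamma')\leq C'r$ for some $C'$ depending only on $C_0$ and $\Lambda$. The unreplaced portions of $\gamma'$ contribute at most $\int_\gamma 1_{\widetilde{E}}\,ds$ to $\int_{\gamma'}1_E\,ds$, while the replaced portions --- which lie in $X$ but are a priori unrelated to $E$ --- contribute at most their total length $\Lambda\delta'\tau_0^{1/q}r$. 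Using $\tau_0 = 2\tau$ and the earlier choice of $\delta'$,
\[
\int_{\gamma'}1_E\,ds \leq (1+\Lambda)\,\delta'\,(2\tau)^{1/q}\,r \leq \delta\tau^{1/q}r,
\]
as required. The main obstacle is the careful bookkeeping of constants: $\epsilon_1$ must be small enough both to control doubling and to lift the maximal-function estimate on $X$ up to $\Omega_R$ at a level below $\tau_0$, while $\delta'$ must simultaneously absorb the factor $(1+\Lambda)$ from replacement by $X$-quasigeodesics. The conceptually delicate point is that each individual replaced sub-arc is completely uncontrolled with respect to $E$ and must be handled purely through a total-length bound inherited from $\int_\gamma 1_{\widetilde{E}}$.
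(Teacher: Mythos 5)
Your overall architecture (Theorem \ref{thm:classification1} plus Corollary \ref{cor:deltasmall} applied to the fillings, quasiconvexity of $X$ from Lemma \ref{lemma:quasiconvex}, replacement of the sub-arcs of $\gamma$ lying in $\Omega_R\setminus X$ by $\Lambda$-quasigeodesics in $X$, and Item (1) via Lemma \ref{lemma:doubling}) is the same as the paper's, but the one display you rely on,
\[
M^R_{C_0 r}(1_{\widetilde E})(z) \,\leq\, \tau+\epsilon_1 \qquad (z=x,y),\quad \widetilde E = E\cup(\Omega_R\setminus X),
\]
has a genuine gap. The $E$-part is fine, since $E\subset X\subset\Omega_R$ makes the $\Omega_R$-averages smaller than the $X$-averages. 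The $(\Omega_R\setminus X)$-part is not: Definition \ref{def:PIfilling}(2) controls the density of $\Omega_R\setminus X$ only in the ball of radius equal to the filling scale; pushing it down to a radius $\rho\ll r$ by doubling costs a factor $D^{\lceil\log_2(R/\rho)\rceil}$, unbounded as $\rho\to0$, and no choice of $\epsilon_1$ compensates. Example \ref{ex_cornercutpt} shows the failure is real: at the corner point the density of $\Omega_r\setminus Y$ in balls of radius comparable to $hr$ is bounded below by an absolute constant even though the filling parameter is $h^2$, so the maximal function of $1_{\Omega_r\setminus Y}$ at that point does not become small no matter how small $\epsilon$ is. Excluding small balls around $x,y$ from the obstacle, as in Lemma \ref{lemma:quasiconvex}, does not rescue your argument either: the curve may then wander in $\Omega_R\setminus X$ near $x$ and $y$ with no bound from $\int_\gamma 1_{\widetilde E}$, and the replacement quasigeodesics, which can lie entirely in $E$, are then controlled only by their total length, of order $\Lambda C_0 r$, which destroys the target bound $\delta\tau^{1/q}r$.

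The paper's proof inserts exactly the step you are missing (its Step 2, ``finding nearby dense points''): before invoking the connectivity of the filling it replaces $x,y$ by nearby points $x',y'\in X$ with $d(x,x'),\,d(y,y')\leq\delta'\tau^{1/q}r$, at which \emph{both} the maximal function of $1_{\Omega_{2C'r}\setminus X}$ and that of $1_E$ are small at \emph{all} radii up to the relevant scale. Such points exist by the weak-type estimate of Lemma \ref{lem:localmaximal}, which shows the two bad sets occupy only a fraction of $B(x,\delta'\tau^{1/q}r)\cap X$; one also needs a short doubling computation transferring the hypothesis on $E$ from $x$ to $x'$ for radii above $\delta'\tau^{1/q}r$. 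The curve is then built from $x'$ to $y'$ in the filling, patched inside $X$ exactly as you describe, and completed by short $\Lambda$-quasigeodesics $\gamma_x,\gamma_y$ joining $x$ to $x'$ and $y'$ to $y$, whose full lengths (at most $\Lambda\delta'\tau^{1/q}r$ each) are simply charged to $\int 1_E$. Your bookkeeping of $\delta'$, $\tau_0$, $\epsilon_1$ and $C'$ would otherwise go through, but without this relocation step the central maximal-function estimate is false, so the proposal as written does not prove the theorem.
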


\begin{remark}[Dependence on parameters] \label{rmk:mainresult2}
Here $r_1$ is the only constant that depends on the original scale $r_0$.  In fact, it suffices that $r_1 = r_0/(20C'%_X 
)$; see the end of Step 1 of the proof.
As for $\epsilon_q$, $\tau%_X
$, and $C'$, they all depend on the remaining structural constants but $\epsilon_q$ and $\tau%_X
$ depend additionally on $\delta%_X
$ and $q$.
\end{remark}

 \begin{figure}[h!]
  \centering
    \includegraphics[width=0.70\textwidth]{./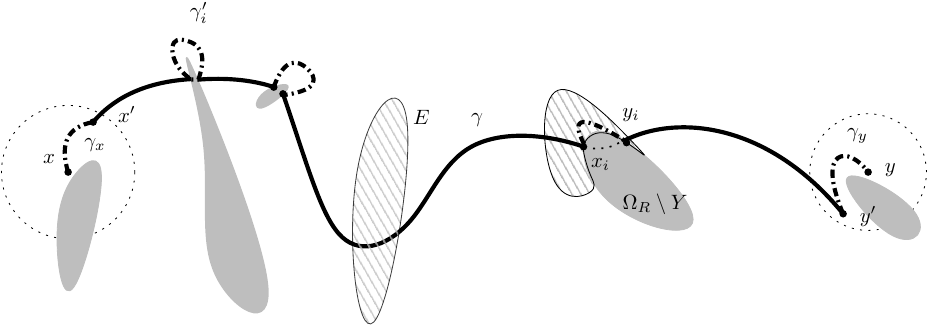}
    \caption{\small Connectivity involves constructing a curve $\gamma$ %$\gamma$ 
    that almost avoids a prescribed obstacle $E$ with small density.  In the proof of Theorem \ref{thm:mainresult2}, this requires first finding a curve in the filling $\Omega_R$ from %near-by 
    nearby 
    points $x',y'$, and then patching the curve 
    with ``detours'' $\gamma_i'$ to fully avoid $\Omega_R \setminus Y$, and 
    $\gamma_x$, $\gamma_y$ to connect $x$ and $y$. In the figure, the solid black curve indicates %the original curve
    $\gamma$
    in the filling $\Omega_R$, with the dotted parts indicating the parts replaced by the dash-dotted detours. %(The remaining notation is as in the proof below.) 
    }
    \label{fig:connectivity}
\end{figure}

%\begin{note}[\em Jasun, 28 Feb 2018]
%I removed the {\tt QUESTION} and {\tt Partial ANSWER (Outline)} from previous versions.
%\end{note}

\begin{proof}%[Proof of Theorem \ref{thm:mainresult2}]

We proceed in three steps:\ (1) fixing parameters for definiteness, (2) passing the density conditions \eqref{eq:finemaxdensity} from points in $Y$ to points in the fillings $\Omega_r$, and then (3) constructing the quasi-geodesics explicitly.

\vspace{.05in}

\textbf{Step 1:\ Fixing parameters and their dependencies}.\
Let $\delta 
\in (0,1)$ and $q>p$ be given. Let $\Lambda = \Lambda(D,C,p)$ be the constant from Lemma  \ref{lemma:quasiconvex}, and let $\epsilon_0$ be the filling threshold %from Lemma \ref{lemma:quasiconvex} 
for $\Lambda$-quasiconvexity to be guaranteed for $Y$. Each filling $\Omega_r$ satisfies $(1,p)$-Poincar\'e inequalities at  scale $r$, so by Theorem \ref{thm:classification1} and Lemma  \ref{lem:deltasmall} there exists a constant $C_0 \geq 1$ %$C_{\rm con}$ 
such that for any $\delta'>0$ there is some $\tau_0 \in (0,1)$ such that $\Omega_r$ is $(C_0,%{\rm con},
\delta',\tau',q)$-max connected at scale $r/2$ --- i.e.\ it is $q$-maximally connected at scale $r/2$ and level $\tau'$
with constants $(C_0,\delta')$ for each $\tau' \in (0,\tau_0)$. %{\rm con}$

%\sout{Let $\Lambda \geq 1$ be the quasi-convexity constant for any $D$-doubling metric measure space that supports a $p$-Poincar\'e inequality with constant $C$, so $\Lambda = \Lambda(D,C,p)$}
Now choose
$\delta' \in (0,1)$
sufficiently small so that both conditions below hold:
\begin{eqnarray}
\Lambda(6(2D)^\frac{4}{q}+2)\delta' ~<~ \delta%_X
, %\text{ and } 
\label{eq:deltachoice1} \\
(2\Lambda+2C_0%{\rm con} 
+ 4\Lambda (2D)^{\frac{4}{q}})\delta' \leq C_0.%{\rm con}. 
\label{eq:deltachoice2}
\end{eqnarray}
In particular, \eqref{eq:deltachoice1} implies that $\delta' \leq \frac{1}{4}$.
%We can also require $\delta \in (0,\frac{1}{4})$. 
This 
fixes $\tau_0 \in (0,1)$  with dependence on data $\tau_0 = \tau_0(C,D,\delta',q)$ as from %Property ($PI_p \Rightarrow MC_{\forall\delta}^p$) of 
Theorem \ref{thm:classification1} and Lemma \ref{lem:deltasmall}, in which case the fillings $\Omega_{r}$ are $(C_0, 
\delta', \tau_0, q)$-max connected at scale $r/2$.
In particular, we may assume $\tau_0 < C_0^{-q}$.

%\sout{As $m = m(C_0%{\rm con})
%$ is independent of $\tau_0$ and $\delta$ by Equation \eqref{eq:m}, choose $\delta_X 
%\in (0,\delta_0)$ sufficiently small so that} 
Next, choose $\tau%_X
 \in (0,1)$ with analogous dependence
$\tau%_X
 = \tau%_X
(C,D,\delta%_X
,q)$ so that %it satisfies
$$
(2D)^{4}\tau%_X
 \leq \frac{\tau_0}{2},
$$
%choose $m \in \N$ so that
and let $m = m(C_0) \in \N$ and $n = n(\tau%_X
,\delta,p) \in \mathbb{N}$ satisfy
\begin{align}
\label{eq:m}
2^{m-1} < C_0%{ \rm con}
+1 <%&~ 
2^m%,
\text{ and } %\\
%\label{eq:n}
\frac{1}{2}\delta' \tau^{1/p}%\tau_X^{\frac{1}{p}} 
\leq 2^{-n} <%&~ 
\delta' \tau^{1/p}%\tau_X^{\frac{1}{p}} 
< \delta' \tau^{1/q}.%\tau_X^{\frac{1}{q}}.
\end{align}
Letting
$%\begin{equation} \label{eq:epsilon}
\epsilon_q := \min\big\{%\Bigg\{
\frac{1}{4}D^{-(5+n+m)}\tau%_X
, 
\epsilon_0
\big\}
$, %\end{equation}
it follows that $\epsilon_q < \frac{1}{2}$ and each $\epsilon \in (0,\epsilon_q)$ satisfies
\begin{equation} \label{eq:exponents}
((2D)^{4}\tau%_X
 + 4D^{5+n+m}\epsilon)^{1/q} < 2(2D)^{4/q}%\frac{4}{q}
 \tau^{1/q}%\tau_X^{1/q}
\end{equation}
 and in particular, that
 \begin{equation} \label{eq:epschoice}
(2D)^{4}\tau
 + 4D^{5+n+m}\epsilon < \tau_0.
\end{equation}
Now let
$\epsilon \in (0,\epsilon_q)$ and $r_0 > 0$ be given,
and assume that $Y$ is a $(\epsilon,D,C,p)$-PI fillable subset of $X$,
up to scale $r_0$.  Since $\epsilon_q \leq \frac{1}{2}$, it follows from Lemma \ref{lemma:doubling} that $Y$ is $(2D,r_0)$-doubling.

Fix $C'%_X 
 = 2C_0%{\rm con}
$.  We now show $Y$ is $(C'%_X 
, \delta%_X
, \tau%_X
, q)$-max connected at scale $r_1 = r_0/(20C'%_X 
)$. 

\vspace{.05in}
\textbf{ Step 2:\ Finding nearby dense points}.\
To verify $(C'
, \delta
, \tau
, q)$-max connectivity at scale $\frac{1}{20 C'
}r_0$, %we need to 
take an arbitrary pair $x,y \in Y$ satisfying $r:=d(x,y) \in (0,\frac{1}{20 C' 
}r_0)$ and an arbitrary Borel set $E$ such that
\begin{align} \label{eq:densitypt}
 M^0_{C'%_X 
 r}1_E(x) < \tau%_X
 \text{ and } M^0_{C'%_X 
 r}1_E(y) < \tau.
\end{align}
Our goal is to construct a curve $\gamma$ in $Y$ with length at most $C'%_X 
r$ which connects $x$ and $y$ with
\[
 \int_\gamma 1_E~ds \leq \delta%_X
 \tau^{1/q}
 r.
\]
%The first step is constructing a curve in the filling $\Omega_{2C'%_X 
%r}$.
Let $\Omega_{2C'%_X 
 r}$ be a filling of $Y$ %the open subset 
from Definition \ref{def:PIfilling}, so
\begin{equation} \label{eq:fillable}
\frac{ \mu( \Omega_{2C'
 r} \cap B(x,2C' 
 r) \setminus Y ) }{ \mu(\Omega_{2C' 
r} \cap B(x,2C' 
 r)) } < \epsilon
\end{equation}
and as a shorthand, for $\rho > 0$ put 
$$
B_{2C'r}(x,\rho) = B(x,\rho) \cap \Omega_{2C'r}.
$$
Computing first with \eqref{eq:fillable} and the $D$-doubling property of $\Omega_{2C'
r}$ yields
\begin{align*}
\mu(B_{2C'r}(x,2C'%_X 
 r) 
 \setminus Y) \stackrel{\eqref{eq:fillable}}{\leq}&~
\epsilon\mu(B_{2C'r}(x,2C'%_X 
 r) %\cap \Omega_{2C'%_X r}
 ) \\ ~\stackrel{%\text{Doubling}, \eqref{eq:m}%n}, 
\eqref{eq:m}}{\leq}&~ D^{m+1}\epsilon\mu(B_{2C'r}(x,r) %\cap \Omega_{2C'%_X r}
 ) %\\ ~
 \stackrel{%\text{Doubling}, 
\eqref{eq:m}%\eqref{eq:n}%, \eqref{eq:m}
}{\leq}~ D^{m+n+1}\epsilon\mu(B_{2C'r}(x,\delta' \tau^{1/q}%\tau_X^\frac{1}{q} 
r) %\cap \Omega_{2C'%_X  r}
 )
\end{align*}
as well as the estimate below, where $B_Y$ is the ball in $Y$:
\begin{eqnarray}
 \mu(B_Y(x,\delta' \tau^{1/q}%\tau_X^\frac{1}{q} 
 r) %\cap X
 ) &\geq& \mu(B_{2C'r}(x,\delta' \tau^{1/q}
 r) 
 ) - \mu(B_{2C'r}(x,2C'%_X 
 r) 
 \setminus Y) \nonumber \\
 &\geq& (1- D^{m+n+1}\epsilon)\mu(B_{2C'r}(x,\delta' \tau^{1/q}%\tau_X^\frac{1}{q} 
 r)) %\cap \Omega_{2C'%_X  r}) 
 %\nonumber \\
 ~\stackrel{\eqref{eq:epschoice}}{\geq}~
%&\stackrel{\eqref{eq:epsilon}}{\geq}& 
\frac{3}{4} \mu(B_{2C'r}(x,\delta' \tau^{1/q}%\tau_X^\frac{1}{q} 
r) %\cap \Omega_{2C'%_X  r}
 ). \label{eq:xvolest}
\end{eqnarray}
Putting $R := (1+2\delta'\tau^{1/q}%\tau_X^{1/q}
)
r$,
for $l = 4D^{n+m+5}\epsilon$ consider the set 
$$
\mathcal{D}\defeq\Big\{
x' \in B_{2C'r}(x,\delta' \tau^{1/q}
r) %{ \cap \Omega_{2C'  r} }
 :\
M_{C_0%{\rm con}
R}^{2C'%_X 
 r}1_{\Omega_{2C' 
 r} \setminus Y}(x') > l
\Big\},
$$
and %For the following computation 
note that %we have 
$C_0%{\rm con}
(1+3 \delta' \tau^{1/q}
)r \leq C'%_X 
r$,
so 
Lemma \ref{lem:localmaximal} implies that
\begin{eqnarray*}
\mu\left(\mathcal{D}\right) &\leq&
\frac{ D^3 }{ l } \mu(B_{2C'r}(x,C_0%{\rm con}
(1+3\delta' \tau^{1/q})%\tau_X^\frac{1}{q})
r)\setminus Y %\cap \Omega_{2C'%_X  r} 
 ) \\ &\leq&
\frac{ D^3 }{ l } \mu(B_{2C'r}(x,2C'%_X 
r) \setminus Y %\cap \Omega_{2C'%_X r} 
) \\ &\stackrel{\eqref{eq:fillable}}{\leq}&
\frac{ D^3\epsilon }{ l } \mu(%\Omega_{2C'%_X r} \cap
B_{2C'r}(x,2C'%_X 
 r)) \\ &\stackrel{ \eqref{eq:m}%,\eqref{eq:n} 
 }{\leq}&
\frac{ D^{n+m+5}\epsilon }{ l } \mu(B_{2C'r}(x,\delta' \tau^{1/q}%\tau_X^\frac{1}{q} 
r) %\cap \Omega_{2C'%_X r}
) %\\
~=~ %&=& %&\stackrel{\eqref{eq:epsilon}}{<}& 
\frac{ \mu(B_{2C'r}(x,\delta' \tau^{1/q}%\tau_X^\frac{1}{q} 
r) %\cap \Omega_{2 C'%_X r}
) }{4} %\\ &\stackrel{\eqref{eq:xvolest}}{\leq}& 
~\stackrel{\eqref{eq:xvolest}}{\leq}~
\frac{\mu(B_Y(x,\delta' \tau^\frac{1}{q} 
r)% \cap X
) }{3}.
\end{eqnarray*}
%\begin{eqnarray*}
%\mu(\Big\{
%x' \in B(x,C\delta'\tau^{1/p} r):\
%M_{CR}^r1_{\Omega_r \setminus X}(x') > l
%\Big\}) &\leq&
%\frac{ D^3 }{ l } \mu(\Omega_r \setminus X \cap B(x,C(1+3\delta'\tau^{1/p})r)  \\ &\leq&
%\frac{ D^3 }{ l } \mu(\Omega_r \setminus X \cap B(x,2Cr) ) \\ &\stackrel{\eqref{eq:fillable}}{\leq}&
%\frac{ D^3\epsilon }{ l } \mu(\Omega_r \cap B(x,2Cr)) \\ &\stackrel{ \eqref{eq:m}%n} }{\leq}&
%\frac{ D^{n+4}\epsilon }{ l } \mu(B(x,C\delta'\tau^{1/p} r)) < \frac{ \mu(B(x,C\delta'\tau^{1/p} r)) }{2}
%\end{eqnarray*}
A similar argument with $l = (2D)^{4}\tau
$ %and $\rho =\delta' \tau_X^\frac{1}{q}$ yields $C_{\rm \con}(1'+3 delta \tau_X^\frac{1}{q})r \leq 2Cr=C_Xr$ 
yields
\begin{eqnarray*}
\mu( \{ x' \in  B_Y(x,{\delta' \tau^{1/q}%\tau_X^\frac{1}{q}
}r) %\cap X 
; M^0_{\delta' \tau^{1/q}%\tau_X^\frac{1}{q}
r}1_E(x') > l \} ) &\leq&
%\frac{(2D)^3}{l} \| 1_E\lfloor_{B(x,(C+1)\rho) \cap X } \|_{L^1} \\ &\leq&
\frac{ (2D)^3 \mu(E \cap B(x,2\delta' \tau^{1/q}%\tau_X^\frac{1}{q}
r)) }{l} \\ %&\leq&
%\frac{ (2D)^3\mu(%X \cap 
%B_X(x,2\delta' \tau^{1/q}%\tau_X^\frac{1}{q}
%r)) }{l} M^0_{C'%_X 
 %r}1_E(x) \\ 
 &\stackrel{\eqref{eq:m}}{\leq}&
%\frac{ (2D)^{3+m} \mu(X \cap B(x,\rho)) }{l} M^0_{(C+1)\rho}1_E(x) \\ &\leq&
\frac{ (4D)^{4} \mu(%X \cap 
B_Y(x, \delta' \tau^{1/q}%\tau_X^\frac{1}{q}
r)) }{l} M^0_{C'%_X 
 r}1_E(x) \\ &\leq&
\frac{ (4D)^{4} \mu(%X \cap 
B_Y(x, \delta' \tau^{1/q}%\tau_X^\frac{1}{q}
r)) }{l} \tau%_X
 \\
&<&
\frac{\mu(%X \cap 
B_Y(x,\delta' \tau^\frac{1}{q}
r ) )}{2}.
\end{eqnarray*}
As a result of 
the previous estimates, 
there exist %points 
$x' \in B(x,\delta' \tau^{1/q}
r) \cap Y \subset \Omega_{2C' r}$ so that
\begin{equation} \label{eq:fillable2}
4D^{n+m+5}\epsilon >
M_{C_0%{\rm con}
R}^{2C'r}1_{\Omega_r \setminus Y}(x') %\geq
%\frac{ \mu(\Omega_r \setminus X \cap B(x',C_0
%\rho)) }{ \mu(\Omega_r \cap B(x',C_0%{\rm con}
%\rho)) }
\end{equation}
%for all $0 < \rho < R$, 
as well as
\begin{equation} \label{eq:maxnearby0}
(2D)^{4}\tau
 >
M^0_{\delta' \tau^{1/q}r
}1_E(x').
\end{equation}
%for all $0 < \rho \leq \delta' \tau_X^\frac{1}{q} r$.
With $R$ as before, note that %Observe for 
any $s \in (\delta' \tau^{1/q}%\tau_X^\frac{1}{q}
r, C_0%{\rm con}
R)$ and %for any 
$x' \in B(x,\delta' \tau^{1/q}%\tau_X^\frac{1}{q}
r)$
satisfy %, then
\begin{eqnarray*}
B(x',s) \,\subset\,
B(x,s+ \delta' \tau^{1/q}%\tau_X^\frac{1}{q}
r) \,\subset\,
B(x,2s) \subset B(x,C'
 r).
\end{eqnarray*}
Then, doubling and our previous assumption \eqref{eq:densitypt} on $x$
yield
\begin{eqnarray}
\notag
\avint_{%\Omega_{2C'%_X  r} \cap 
 B_{2C'r}(x',s)} 1_E ~d\mu &=&
\frac{ \mu( E \cap B(x',s) ) }{ \mu(%\Omega_{2C'%_X r} \cap
B_{2C'r}(x',s)) } \leq
\frac{ \mu( E \cap B(x,s+\delta' \tau^{1/q}%\tau_X^\frac{1}{q}
r%\rho
) ) }{ \mu(%X \cap 
B_Y(x',s)) } \\ &\leq&
\notag
\tau%_X
 \frac{ \mu(%X \cap 
 B_Y(x,s+\delta' \tau^{1/q}%\tau_X^\frac{1}{q}
r%\rho
) ) }{ \mu(%X \cap 
B_Y(x',s)) } \leq %\\ &\leq&
\notag %\label{eq:maxnearby1}
\tau%_X
 \frac{ \mu(%X \cap 
 B_Y(x,2s) ) }{ \mu(%X \cap 
 B_Y(x',s)) } \\ &\leq&
\notag 
\tau%_X
 \frac{ \mu(%X \cap 
 B_Y(x',4s) ) }{ \mu(%X \cap 
 B_Y(x',s)) } \leq
(2D)^2\tau.
\end{eqnarray}
As for
$s \in (0,\delta' \tau^{1/q}%\tau_X^\frac{1}{q}
r)$
and for %good points 
$x'$ satisfying \eqref{eq:maxnearby0}, we have
$$%\begin{equation} \label{eq:maxnearby2}
\avint_{%\Omega_{2C'%_X  r} \cap 
B_{2C'r}(x',s)} 1_E ~d\mu =
\frac{ \mu( E \cap B(x',s) ) }{ \mu(%\Omega_{2C'%_X  r} \cap 
 B_{2C'r}(x',s)) } \leq
\frac{ \mu( E \cap B(x',s) ) }{ \mu(%X \cap
B_Y(x',s)) } \leq
(2D)^{4}\tau
,
$$%\end{equation}
so the previous two estimates %\eqref{eq:maxnearby1} and \eqref{eq:maxnearby2} 
combine to yield
\begin{equation} \label{eq:maxnearby}
M_{C'R}^{2C'r}(1_E)(x') \leq (2D)^{4}\tau.
\end{equation}
Put
$F_r = E \cup \Omega_{2C' r} \setminus Y$.
Subadditivity of the maximal function and Equations %\eqref{eq:epschoice} and 
\eqref{eq:fillable2} and \eqref{eq:maxnearby} further yield
$$
M_{C'R}^{2C' r}(1_{F_r})(x') \,\leq\,
M_{C' R}^{2C' r}(1_E)(x') + M_{C' R}^{2C'%_X 
 r}(1_{\Omega_{2C'%_X 
 R} \setminus X})(x') \,\leq\,
(2D)^{4}\tau
 + 4D^{5+n+m}\epsilon \stackrel{\eqref{eq:epschoice}}{<} \tau_0.
$$
Similarly, since $M^0_{C' r}1_E(y) < \tau$ there exists $y' \in B(y,\delta' \tau^\frac{1}{q}) \cap Y \subset \Omega_{2C' r}$ so that
$$
M^{2C'%_X 
 r}_{C'%_X 
 R}1_{F_r}(y') < %(2D)^{4}\tau_X + 4D^{5+n+m}\epsilon \stackrel{\ref{eq:epschoice}}{<} 
\tau_0.
$$
%
%The comparisons to $\tau_0$ follow from the choices of $\tau_X$ and $\epsilon$.

\textbf{Step 3:\ Arranging quasi-geodesics}.\
The space
$\Omega_{2C' 
 r}$ is $(C_0
, \delta', (2D)^{4}\tau
 + 4D^{5+n+m}\epsilon 
, q)$-max connected at scale $C'
r$. % \geq d(x',y')$.
Since
$$
d(x',y')  \leq
d(x',x) + d(x,y) + d(y,y') \leq
\delta' \tau^{1/q}%\tau_X^\frac{1}{q} 
r+r+ \delta' \tau^{1/q}%\tau_X^\frac{1}{q} 
r \leq R \leq 2r < C'%_X 
r,
%R \leq 2r
$$
there thus exists $L > 0$ and a rectifiable curve $\gamma_1: [0,L] \to \Omega_{2C' r}$ of length at most %a $C_0%{\rm con}
%$-multiple of
$C_0 R$
and so that $\gamma_1(0) = x'$ and $\gamma_1(L) = y'$ and
\begin{equation} \label{eq:gap}
\int_{\gamma_1} 1_E ~ds \leq
\int_{\gamma_1} 1_{F_r} ~ds \leq
\delta' \big( (2D)^{4}\tau%_X
 + 4D^{5+n+m}\epsilon \big)^{1/q}R \stackrel{\eqref{eq:exponents}}{\leq} 
2\delta' (2D)^{4/q}%\frac{4}{q} 
\tau^{1/q}r.
\end{equation}
 
We now modify $\gamma_1$ so that it lies entirely in $Y$ and joins $x$ and $y$.
This is done by replacing portions of the curve with curves in $Y$, and appending two segments on each end. 
(See Figure \ref{fig:connectivity}.) %for the graphic representation of this process. 
This uses the $\Lambda$-quasiconvexity of $Y$ %is $\Lambda$ quasiconvex 
at scale $r_1 = \frac{r_0}{2C_0}%{\rm con}}
$ from Lemma \ref{lemma:quasiconvex}.%, and its proof. 

First, the set $\gamma_1^{-1}(\Omega_{2C'r} \setminus Y)$ is open and can be expressed as a (possibly finite) union of countably many open disjoint intervals:
\[
 \gamma_1^{-1}(\Omega_{2C'%_X 
 r} \setminus Y) = \bigcup_{i} (a_i,b_i).
\]
Let $x_i = \gamma_1(a_i)$ and $ y_i = \gamma_1(b_i)$. Since $Y$ is $\Lambda$-quasiconvex, we can find curves $\gamma'_i \co [0,L_i]\to Y$ connecting $x_i$ to $y_i$, which are parametrized by length 
and satisfy %. The lengths of these satisfy $L_i \leq \Lambda d(x_i,y_i)$ 
\[
 \sum_i L_i \leq 
 \sum_i \Lambda d(x_i,y_i) \leq 
 \Lambda \int_{\gamma_1} 1_{F_r} \, ds \leq
 2\Lambda\delta'(2D)^{4/q}%{\frac{4}{q}}
 \tau^{1/q}%\tau_X^{\frac{1}{q}}
 r. 
\]
Similarly as in the proof of Lemma \ref{lemma:quasiconvex},
define a curve by patching the intervals $(a_i,b_i)$ with the curves $\gamma'_i$, i.e.\
\[
\gamma^*_2(t) :=
\begin{cases}
\gamma'_{i} \Big(
\frac{ L_{i} }{ b_{i}-a_{i} } \big(t-a_{i}\big)
\Big), &
\text{ if } t \in [a_{i},b_{i}] \text{ for some } i \in \N,
\\
\gamma_1(t), &
\text{ otherwise},
\end{cases}
\]
and let $\gamma_2$ be its arclength parametrisation. %then parametrizing it by length. 
Now, $\gamma_2$ lies entirely in $Y$, since $\gamma_1$ only lies outside of $Y$ in the intervals $(a_i,b_i)$. %, and we replaced those portions with curves in $X$. 
Further,
\begin{equation}\label{eq:gamma2est}
\left\{\hspace{.25in}
\begin{split}
 \int_{\gamma_2} 1_E\, ds \leq&~
 \int_{\gamma_1} 1_E\, ds + \sum_i L_i \\ \leq&~
 2\delta'(2D)^{4/q}%{\frac{4}{q}}
 \tau^{1/q}%\tau_X^{\frac{1}{q}}
 r + 
 2 \Lambda \delta'(2D)^{4/q}%{\frac{4}{q}}
 \tau^{1/q}%\tau_X^{\frac{1}{q}}
 r \leq 
 6 \Lambda \delta'(2D)^{4/q}%{\frac{4}{q}}
 \tau^{1/q}%\tau_X^{\frac{1}{q}}
 r,
 \end{split}
 \hspace{.75in}
 \right.
\end{equation}
and
$$
\len(\gamma_2) \leq \len(\gamma_1) + \sum_i L_i \leq %C_0%{\rm con}R 
C_0 R + 4\Lambda \delta'(2D)^{4/q}%{\frac{4}{q}}
\tau^{1/q}%\tau_X^{\frac{1}{q}}
r.
$$
Next, the pairs of points $x$, $x'$ and $y$, $y'$, can be joined by $\Lambda$-quasigeodesics $\gamma_x$ and $\gamma_y$, respectively. Taking the concatenated curve
$$
\gamma := \gamma_x \cup \gamma_2 \cup \gamma_y,
$$
it follows from \eqref{eq:gap} %and from $R < 2r$ 
that the required avoidance holds
\begin{eqnarray*}
\int_\gamma 1_E~ds &\leq&
{\rm len}(\gamma_x) +
\int_{\gamma_2} 1_E ~ds +
{\rm len}(\gamma_y) \\ 
&\stackrel{\eqref{eq:gamma2est}}{<}&
\Lambda \delta' \tau^{1/q}%\tau_X^\frac{1}{q} 
r +
   6\Lambda \delta'(2D)^{4/q}%{\frac{4}{q}}
   \tau^{1/q}%\tau_X^{\frac{1}{q}}
   r +
\Lambda \delta' \tau^{1/q}%\tau_X^\frac{1}{q} 
r %\\
\stackrel{\eqref{eq:deltachoice1}}{<}
%\delta' \Lambda\big( 12(2D)^\frac{4}{q} + 2\big)\tau_X^\frac{1}{q}r \leq 
\delta%_X 
\tau^{1/q}%\tau_X^\frac{1}{q}
r,
\end{eqnarray*}
as well as
\begin{eqnarray*}
 \len(\gamma) &\leq& \len(\gamma_x) + \len(\gamma_2) + \len(\gamma_y) \\ &\leq& 
 \Lambda \delta' \tau^{1/q}%\tau_X^{\frac{1}{q}}
 r + C_0%{\rm con}
 R + 4\Lambda \delta'(2D)^{4/q}%{\frac{4}{q}}
 \tau^{1/q}%\tau_X^{\frac{1}{q}}
 r + \Lambda \delta' \tau^{1/q}%\tau_X^{\frac{1}{q}} 
 r \\ &\leq& 
 \big(C_0%{\rm con} 
 + (2\Lambda+2C_0%{\rm con} 
 + 4\Lambda (2D)^{4/q}%{\frac{4}{q}}
 )\delta' \tau^{1/q}%\tau_X^{\frac{1}{q}}
 \big)r %\\ 
 ~\stackrel{\eqref{eq:deltachoice2}}{\leq}~
 2 C_0%{\rm con} 
 r = C'%_X 
r.
\end{eqnarray*}
This curve satisfies the desired %connectivity 
estimates, and shows $(C'%_X 
, \delta%_X
, \tau%_X
,q)$-max connectivity.
\end{proof}

We now apply the previous theorem to obtain Poincar\'e inequalities for fillable sets.

\begin{proof}[{Proof of Theorem \ref{thm:PIthm}}, Part (a)] %~\\
Fix structural constants $(p,D,C,r_0)$, which in turn fix the constant $C'
= C'(D,C,p)%
$ in Theorem \ref{thm:mainresult2}. Next, let $q>p$ be given and let $\delta_{q,2D} \in (0,1)$ be as in Theorem \ref{thm:classification2} under the choice of 
%$q$, $2D$, $r_0$, and $C'$
structural constants $(q,2D,C',r_0)$.

Applying now
Theorem \ref{thm:mainresult2} and Remark \ref{rmk:mainresult2}, %and Lemma \ref{lemma:doubling} 
there exists $\epsilon_q %= \epsilon_q(C,D,q) 
> 0$ such that if $\epsilon \in (0,\epsilon_q)$ and if $Y$ is $(\epsilon,D,C,p)$-PI fillable, then $Y$ is also $2D$-doubling and $(C',\delta_{q,2D}, \tau,q)$-max connected for some $\tau$, both at scale $r_1 = r_0/(20C')$.

Since $\delta_{q,2D}$ was chosen as in Theorem \ref{thm:classification2}, the space $Y$ satisfies a $(1,q)$-Poincar\'e inequality 
with
constant $C_q = C_q(q,D,C',\tau)$ 
at scale $r_1/C_r' = r_0/C_r$ for some constants $C_r$ and $C_r'$.
\end{proof}

\begin{proof}[{Proof of Theorem \ref{thm:PIthm}, Part (b)}] %\textbf{Part (b):} 
By Part $(a)$ there is a density parameter $\epsilon_q$ such that the $(1,q)$-Poincar\'e inequality holds. %as long as the density parameter $\epsilon < \epsilon_q$. 
Now, if $Y$ is asymptotically $p$-Poincar\'e fillable, then there exists for any $\epsilon>0$ a scale $r_\epsilon > 0$ where $Y$ is $(\epsilon,D,C,p)$-PI fillable. Choosing $\epsilon \in (0,\epsilon_q)$ for any fixed $q>p$, %we can obtain 
the local $(1,q)$-Poincar\'e inequality follows.
\end{proof}
%, so by definition there exists $r_0 > 0$ so that $X$ is \textcolor{blue}{$(\epsilon,D,C,p)$}-PI fillable and hence Equation \eqref{eq:fillable} holds.  The rest of the proof is the same as for Part (a).% and $\tau_0$ be as before and assume that $X$ is asymptotically $p$-Poincar\'e fillable.  Given $q > p$ choose 

%\vfill
%\pagebreak

\section{Application: Generalized Sierpi\'nski sponges and uniform domains} \label{sec:almostunif}

Here we apply the general filling theorem to prove Poincar\'e inequalities in various new contexts.

\subsection{Sierpi\'nski sponges} \label{sec:sierpinskisponge}

In this subsection we prove Theorem \ref{thm:ndimSiercarpet} for sponges $S_{\nseq}$.
A crucial property is the following separation condition, given below, for sub-cubes $R \in \nmR_{\nseq,k}$ removed through stages $1$ through $k$ in the construction of $S_{\nseq}$.

\begin{lemma}\label{lem:squaresep}
If $R, R'
\in \nmR_{\nseq, k}$ with $R \neq R'$, then %$d(R,R') \geq \frac{1}{3}s_{k-1}$. Also
$$
d(R,R') \geq \frac{1}{3}s_{k-1}
\text{ and }
d(R, \partial [0,1]^d) \geq \frac{1}{3}s_{k-1}.
$$
In particular, the removed sub-cubes are uniformly $\frac{1}{3\sqrt{d}}$-separated.
\end{lemma}

\begin{proof}
Without loss of generality let $R \in \mR_{\nseq,l}$ and $R' \in \mR_{\nseq,l'}$
with $k \geq l \geq l'$. %\sout{and $R \neq R'$ be arbitrary}.
Let $T$ be the unique cube in $\mathcal{T}_{l-1,{\nseq}}$ %at the $l-1$'th stage 
that contains $R$. Clearly $R' \cap T \subset \partial T$ and $n_l \geq 3$, so
$$
d(R,R') \geq d(R, \partial T) ~\geq~
\frac{1}{3}s_{l-1} ~\geq~
\frac{1}{3}s_{k-1}
$$
and moreover
$$
\frac{1}{3}s_{l-1} ~\geq~
\frac{1}{3}s_{l} ~\geq~
\frac{1}{3}\min\left\{
\frac{\diam(R)}{\sqrt{d}},
\frac{\diam(R')}{\sqrt{d}}
\right\}.
$$
The same argument works for $\partial [0,1]^d$.
\end{proof}

%\begin{note}[\underline{Jasun, 23 Mar 2018}]
%{\edit
%Is there a reason why the collection of central subcubes is denoted without $\nseq$ as a subscript?
%}
%\end{note}

%\begin{note}[\em Jasun, 6 March 2018]
%{\edit The terminology of ``$d$-dimensional Sierpi\'nski carpet'' might be slightly misleading, in that $d$ is not the Hausdorff dimension of the carpet but the Euclidean (Assouad?) dimension in which the carpet lives!

%Also, for $d > 2$, should we call them ``sponges'' instead, \'a la Menger sponge?
%}\end{note}

%\sout{We will prove t}{\edit T}
%The following theorem
%extends the main result of \cite{}.

%\begin{note}[\em Jasun, 6 March 2018]
%Is the summability criterion
%$\sum_i \frac{1}{n_i^d} < \infty$
%simply equivalent to positive Lebesgue $d$-measure, by means of a computation?  If so, then this could be relegated to a remark and the statements could be further simplified.
%\end{note}
%
%\textbf{Remark:}
%\begin{remark}

To clarify the relationship between Case (4) in Theorem \ref{thm:ndimSiercarpet} and the other cases below, we note that
the set $S_{\nseq}$ has positive Lebesgue measure if and only if
${\bf n}^{-1} \in \ell^d(\N)$,
that is
$$
\sum_{i=1}^\infty \frac{1}{n_i^d}<\infty,
$$
%\sout{which }
and this follows directly from Lemma \ref{lem:arreg} below.
%\end{remark}

%\begin{note}[\underline{Jasun, 4 May 2018}]
%{\edit How about moving the statement of Theorem \ref{thm:ndimSiercarpet} and the proof of Case (5) to the introduction (eventually)?
%}
%\end{note}

%\begin{note}[\underline{Jasun, 20 Mar 2018}]
%{\edit It appears that the first (and only) mention of Alberti representations is below, in the proof of (5).  Should we add a short discussion of them?}
%\end{note}

%\sout{This will be done in the following stages that will be done }
%{\edit
The proof of Theorem \ref{thm:ndimSiercarpet} will be given
%}
in separate lemmas. First, Case (4) is proven directly from certain consequences of Poincar\'e inequalities, namely Cheeger's Rademacher Theorem \cite{ChDiff99}.  To keep the discussion self-contained, we introduce the relevant notions in context, below.
%}

%\begin{itemize}
%
%\item
\begin{proof}[Proof of Case (4) of  Theorem \ref{thm:ndimSiercarpet}]
%\sout{\sc Measure zero carpets have no measure making them PI}:\
If $S_{\nseq}$ supports a $(1,p)$-Poincar\'e inequality for some $p \geq 1$ with respect to some doubling measure $\mu$, then Cheeger's theorem \cite{ChDiff99} holds.  In particular, there exist a partition $\{S^j_{\nseq}\}$ of $S_{\nseq}$ and Lipschitz maps $\varphi^j: S^j_{\nseq} \to \R^{m_j}$ so that for every Lipschitz function $f: S_{\nseq} \to \mathbb{R}$ there exists a unique $L^\infty$-vectorfield $D^jf: S^j_{\nseq} \to \mathbb{R}^{m_j}$ so that, for $\mu$-a.e.\ $x \in S^j_{\nseq}$, it holds that
$$
\frac{f(y)-f(x) - D^jf(x)\cdot(\varphi^j(y)-\varphi^j(x))}{|x-y|} \to 0
$$
as $y \to x$.  By a result of Keith \cite[Theorem 2.7]{keithdiff}, the components $\varphi^j_k$ of each $\varphi^j$ can be chosen to be distance functions of the form
$$
\varphi^j_k(x) = |x - x^j_k|
$$
for some $x^j_k \in S^j_{\nseq}$.  Each
%\sout{of these are} {\edit $\varphi^j_k$ is}
is (classically) differentiable everywhere except at $x^j_k$,  so each %we can replace 
$D^jf(x)$ 
can be replaced 
with the vectorfield 
$$
\nabla\varphi^j(x)D^jf(x): S^j_{\nseq} \to \mathbb{R}^d,
$$
where $\nabla \varphi^j$ is the $d\times m_j$ matrix whose columns are the gradients of the components. %, we may assume that $m_i=d$ and that $\varphi^j(x) = x$. 
In other words, each $f$ is $\mu$-a.e.\ differentiable with respect to the linear coordinate functions $x_j$ as well as the generalized ``coordinates'' $\varphi^j$. Thus, for every $U_i$ the chart $\phi^j$ can be chosen using a subset of the coordinates. Since on every positive $\mu$-measured subset of $S_\nseq$ the coordinates $x_j$ are linearly independent on $S_\nseq$, then we need all the coordinates and we can chooose the charts as $\phi^j(x)=x$. The result of De Philippis, Rindler and Marchese \cite{philiprindler}, which proves a conjecture of Cheeger, ensures that $\phi^j(S^j_\nseq)=S^j_\nseq$ has positive Lebesgue measure.
\end{proof}

%\end{itemize}

As we will see, the equivalence of Conditions (1)--(3) is a special case of Theorem \ref{thm:PIthm}.  We begin with checking properties of the Lebesgue measure $\lambda$ restricted to $S_{\nseq}$.

\begin{lemma}[Basic volume estimate]\label{lem:basicvol} 
Let $T \in \mathcal{T}_{\nseq,k}$, then
$$
\exp\Big(-2\sum_{i=k+1}^\infty \frac{1}{n_j^d}\Big) \leq 
\frac{ \lambda(T \cap S_{\nseq}) }{ \lambda(T) } = \prod_{i=k+1}^\infty \left(1-\frac{1}{n_i^d}\right) \leq 
\exp\Big(-\sum_{i=k+1}^\infty \frac{1}{n_j^d}\Big). %\lambda(T).
$$

\end{lemma}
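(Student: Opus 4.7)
The plan is to establish the equality first by straightforward induction on the construction of the sponge, and then to derive the two exponential bounds from the elementary inequalities $1-x \le e^{-x}$ and $1-x \ge e^{-2x}$ (valid on $[0,1/2]$).

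First, I would fix $T \in \mathcal{T}_{k,\nseq}$ and examine how much of $T$ survives after one more stage. By the construction described in the introduction, $T$ is subdivided into $n_{k+1}^d$ equal subcubes, of which exactly one (the central one) is removed. Since all these subcubes have equal Lebesgue measure $\lambda(T)/n_{k+1}^d$, we obtain
\[
\lambda(T \cap S_{k+1,\nseq}) \;=\; \lambda(T)\left(1 - \frac{1}{n_{k+1}^d}\right).
\]
Moreover, $T \cap S_{k+1,\nseq}$ is a disjoint union of the surviving subcubes, each of which belongs to $\mathcal{T}_{k+1,\nseq}$. Iterating this observation, for any $m > k$ we obtain
\[
\lambda(T \cap S_{m,\nseq}) \;=\; \lambda(T)\prod_{i=k+1}^{m}\left(1 - \frac{1}{n_i^d}\right).
\]
Since the sets $S_{m,\nseq}$ are nested and decreasing with $S_{\nseq} = \bigcap_{m \geq 0} S_{m,\nseq}$, continuity of measure (applied to the finite-measure sets $T \cap S_{m,\nseq}$) yields
\[
\lambda(T \cap S_{\nseq}) \;=\; \lambda(T)\prod_{i=k+1}^{\infty}\left(1 - \frac{1}{n_i^d}\right),
\]
which is the claimed equality.

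For the upper exponential bound, the standard estimate $1 - x \le e^{-x}$ (valid for all $x \in \mathbb{R}$) applied to each factor immediately gives
\[
\prod_{i=k+1}^{\infty}\left(1 - \frac{1}{n_i^d}\right) \;\le\; \prod_{i=k+1}^{\infty} e^{-1/n_i^d} \;=\; \exp\!\Big(-\sum_{i=k+1}^{\infty}\frac{1}{n_i^d}\Big).
\]
For the lower bound, I would use the elementary inequality $1 - x \ge e^{-2x}$ for $x \in [0,1/2]$, which is verified by checking the sign of $f(x) = (1-x) - e^{-2x}$ on $[0,1/2]$. Since by hypothesis $n_i \ge 3$ and $d \ge 2$, we have $1/n_i^d \le 1/9 < 1/2$, and hence
\[
\prod_{i=k+1}^{\infty}\left(1 - \frac{1}{n_i^d}\right) \;\ge\; \prod_{i=k+1}^{\infty} e^{-2/n_i^d} \;=\; \exp\!\Big(-2\sum_{i=k+1}^{\infty}\frac{1}{n_i^d}\Big).
\]
There is no serious obstacle here, since everything reduces to bookkeeping of the construction together with two well-known inequalities. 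The only subtlety worth a word is that the product (and the sum in the exponent) may be zero if $\sum 1/n_i^d = \infty$, but both the equality and the inequalities remain valid in that limiting case.
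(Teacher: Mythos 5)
Your proposal is correct and follows essentially the same route as the paper: the paper's proof likewise establishes the equality $\lambda(T \cap S_{\nseq}) = \lambda(T)\prod_{i=k+1}^\infty (1-1/n_i^d)$ by induction on the construction and then invokes the same elementary bounds $e^{-2x} \leq 1-x \leq e^{-x}$ for $x = 1/n_i^d \in [0,\tfrac{1}{2}]$. Your version merely spells out the inductive step and the continuity-of-measure limit that the paper leaves implicit.
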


\begin{proof}
 It is easy to show inductively that
 $$\lambda(T \cap S_{\nseq}) = \lambda(T)\prod_{i=k+1}^\infty \left(1-\frac{1}{n_i^d}\right),$$
 from which the estimate follows, since $e^{-2x} \leq 1-x \leq e^{-x}$ for $x = \frac{1}{n_j^d} \in [0,\frac{1}{2}]$.
\end{proof}

%\begin{note}[\underline{Jasun, 24 Apr 2018}]
%\sout{\edit I switched the order of these lemmas.
%
%Also, since Lemma \ref{lem:unifsquare} has a constant with subscript $PI$, I kept a consistent notation and included a subscript $AR$ for `Ahlfors regular' here.}
%\end{note}

\begin{lemma}\label{lem:arreg}
If $\nseq$ is a sequence of odd positive integers with ${\bf n}^{-1} \in \ell^d(\N)$, then $S_{\nseq}$ is Ahlfors $d$-regular for some constant $C_{AR} = C_{AR}({{\nseq},d})$.
%\sout{independent of the ball}.  
In particular $S_{\nseq}$ is %{\color{red}
%\sout{$D_{\nseq,d}=$}
$2^dC_{AR}$-doubling.
\end{lemma}

\begin{proof}%[\sout{Modified} Proof of Lemma \ref{lem:arreg}]
Given $x \in S_{\nseq}$, $r \in (0,\diam(S_\nseq))=(0,\sqrt{d})$, and $\rho \in (0,r]$, let $Q(x,\rho)$ be the cube with center $x$ and edges parallel to the coordinate axes and of length $\rho/\sqrt{d}$, so $Q(x,r) \subset B(x,r)$.  Choose $k \geq 1$ so that
\begin{equation} \label{eq:stageradius}
8\sqrt{d}s_k ~\leq~ r ~<~ 8\sqrt{d}s_{k-1}
\end{equation}
and let $T_{x,r} \in \mT_{k-1,\nseq}$ be such that $x \in T_{x,r}$ and define
$$
\mathcal{T}_{x,r} \defeq
\{
T \in \mathcal{T}_{k,{\nseq}} \  | \
T \subset Q(x,r) \cap T_{x,r}
\}.
$$
Let $R \in \mR_{k,\nseq}$ be the central square of $T_{x,r}$. Then $\mathcal{T}_{x,r}$ covers $Q(x,\frac{r}{2}) \cap T_{x,r} \setminus R$.  Moreover $$\lambda(Q\big(x,\frac{r}{2}) \cap T_{x,r}) \leq \lambda(Q\big(x,\frac{r}{2}) \cap T_{x,r} \setminus R) + \lambda(R).$$ Thus,
$$
 2|\mathcal{T}_{x,r}| s_k^d \geq |\mathcal{T}_{x,r}| s_k^d + \lambda(R) \geq \lambda(Q\big(x,\frac{r}{2}\big) \cap
T_{x,r}
\setminus R) + \lambda(R)
%\sum_{T \in \mathcal{T}_{x,r}} \lambda(T) - \lambda(R) = (|\mathcal{T}_{x,r}|-1)s_k^d 
\geq \lambda(Q\big(x,\frac{r}{2}) \cap T_{x,r}),
$$
since $|\mathcal{T}_{x,r}| \geq 2$, and $\lambda(R) = s_k^d$. The estimate
\begin{equation}\label{eq:txrest}
|\mathcal{T}_{x,r}|s_k^d \geq
\frac{1}{2}\lambda(Q(x,\frac{r}{2}) \cap T_{x,r}) \geq
\frac{1}{2}\min\{r/(2\sqrt{d}),s_{k-1}/2\}^d \geq \frac{r^d}{2 \left(2^{4}\sqrt{d}\right)^d}
\end{equation}
follows easily from \eqref{eq:stageradius}, because $Q(x,\frac{r}{2}) \cap T_{x,r}$ is a rectangle
%\sout{each of whose }
%{\edit
with %each
%}
side lengths at least $\min\{r/(2\sqrt{d}),s_{k-1}/2\}$.
Thus, using the fact that for any $k$ and any $T \in \mT_{\nseq,k}$,
\begin{equation} \label{eq:subcubemeasure}
\lambda(T \cap S_{\nseq}) = c_{{\nseq},k}\lambda(T),
\text{ where }
c_{{\nseq},k} = \prod_{j=k+1}^\infty \Big( 1 - \frac{1}{n_j^d} \Big).
\end{equation}
Lemma \ref{lem:basicvol} implies that
\begin{eqnarray*}
 \notag
 \lambda(B(x,r) \cap S_{\nseq}) &\geq& %~\geq~ 
 \lambda(Q(x,r) \cap S_{\nseq} \cap T_{x,r}) %\\
 \\ &\geq& %~&\geq&~ 
 \sum_{T \in \mathcal{T}_{x,r}} \lambda(T \cap S_{\nseq}) %\\
 %~&\stackrel{\eqref{eq:subcubemeasure}}{=}&~ 
 ~\stackrel{\eqref{eq:subcubemeasure}}{=}~
 c_{{\nseq},k} \sum_{T \in \mathcal{T}_{x,r}} \lambda(T) %\prod_{j=k+1}^\infty \Big( 1 - \frac{1}{n_j^d} \Big)
 %\\
 ~\geq~ %~&\geq&~ 
 c_{{\nseq},k} |\mathcal{T}_{x,r}| s_k^d %\prod_{j=k+1}^\infty \Big( 1 - \frac{1}{n_j^d} \Big)
 %\\
 %~&\stackrel{\eqref{eq:txrest}}{\geq}&~
 ~\stackrel{\eqref{eq:txrest}}{\geq}~
 \frac{c_{{\nseq},0}}{2}
 \frac{r^d}{(2^4\sqrt{d})^d}.
\end{eqnarray*}
%Note that

The result then follows with constant
$C_{AR} = \frac{2(2^4\sqrt{d})^d} {c_{{\nseq},0}}$. Note that the upper bound for Ahlfors regularity is trivial.
\end{proof}

%\begin{note}[\underline{Jasun, 4 May 2018}]
%{\edit Let's move the statement of Lemma 4.7 inside the proof of Lemma 4.8.}
%\end{note}

%\begin{lemma}\label{lem:unifsquare}
%\sout{Both $\R^d \setminus [0,1]^d$ and $[0,1]^d$ are $2$-Ahlfors regular and satisfy a $(1,1)$-Poincar\'e inequality with constant $C_{PI}=C_{PI}(d)$
%\sout{independent of $k$}.}
%\end{lemma}

%\begin{proof}[\edit Sketch of proof]
%\sout{
%Ahlfors regularity is elementary to show. The Poincar\'e inequalities can be seen in many ways, but perhaps the easiest is by showing that the sets are uniform (see Definition \ref{def:unifdom}), and then applying result of Bj\"orn and Shanmugalingam  \cite{bjornuniform} presented below as Theorem \ref{thm:unifcon}.
%}
%\end{proof}

\begin{lemma}\label{lem:unifasympt} 
The set $S_{\nseq}$ is an asymptotically $1$-Poincar\'e fillable subset of $\R^d$.
\end{lemma}

\begin{proof}
Let $D = 2^dC_{AR}$ be the doubling constant from Lemma \ref{lem:arreg}.  %and
%\sout{let $D_{d,\nseq}$ be the constant from}
%Lemma \ref{lem:arreg},
%respectively. 
 Now, 
 consider the  
 domains $Y_1 = \R^d$ and $Y_2 = [0,1]^d$ and $Y_3=\R^d \setminus R$, for $R \in \nmR_{\nseq,k}$. Each of these satisfies a Poincar\'e inequality
with inflation factor $1$, %on the right hand side (see following equation
that is, $CB \cap Y_i = B \cap Y_i$; see Equation \eqref{eq:atmostoneR};
this follows, for example, from \cite{hajlasz1995sobolev} and the chained ball condition which is easy to verify in this case.
In particular, for each $i = 1, 2, 3$ and
 for any ball $B\defeq B(x,s)$ and any Lipschitz function $f$ on $Y_i$ we have %that,
\begin{equation} \label{eq:atmostoneR}
\fint_{B \cap Y_i} |f-f_{B \cap Y_i}| \,\,d\lambda \leq C_{PI} s  \fint_{B \cap Y_i} \Lip [f]\,\, d\lambda,
\end{equation}
where %and 
the constant $C_{PI}$ is independent of $i$, $B$ %the ball, %or 
and $f$. %$the function. 
This holds, a priori, for any Lipschitz function in $\R^d$ %, but also 
and taking extensions as necessary,
for any Lipschitz function defined on $Y_i \cap B$. %, since we can extend such a function to the entire space $\R^d$.

For each 
$\epsilon \in (0,1)$,
%\sout{
%we will show that $S_{\nseq}$ is $1$-Poincar\'e $\epsilon$-fillable with constants $(D,C)$.
%}
choose $\delta \in (0,\epsilon/4)$
%Choose $\delta>0$ 
so that
$$
1-(1-\delta)^d < \frac{\epsilon}{4^{d+1}\sqrt{d}^d\lambda(B(0,1))}
$$
in which case it holds, for all $r > 0$, that
\begin{equation} \label{eq:deltaannulus}
\lambda(B(x,r) \setminus B(x,r(1-\delta))) =
\lambda(B(0,1)) \cdot \big(1 - (1-\delta)^d\big)r^d < \frac{\epsilon r^d}{4^{d+1}\sqrt{d}^d}.
\end{equation}
Next, choose $j_0 \in \N$ so that both 
$\sum_{i=j_0}^\infty \frac{1}{n_i^d} < \frac{\epsilon}{4}$ 
and
$n_i \geq 2^5\sqrt{d}\delta^{-1}$ 
for all $i \geq j_0$. %and we will show that Poincar\'e fillability
We now claim that $S_{\nseq}$ is $1$-Poincar\'e $\epsilon$-fillable
(Definition \ref{def:PIfilling}) at scale
$$
r_0 = s_{j_0+1} = \prod_{i=1}^{j_0+1} \frac{1}{n_i}
$$
with the above constants $(C_{PI},D)$.%

To see why, let $r \in (0,r_0)$ and $x \in S_{\nseq}$ be given.  Since $d,n_{j_0+1} \in \N$, it follows that $\frac{2\sqrt{d}}{\delta} s_{j_0} \geq \frac{1}{n_{j_0+1}} s_{j_0} = r_0$, 
so 
choose $k \geq j_0$ so that
$$
\frac{2\sqrt{d}}{\delta} s_{k+1} ~\leq~
r ~<~ \frac{2\sqrt{d}}{\delta} s_{k}.
$$

Now let
$\Omega_r := S_{k, {\nseq}}$. To show fillability, we need to show {\bf (i)} doubling, {\bf (ii)} a  local Poincar\'e inequality and {\bf (iii)} an $\epsilon$-density bound. By Lemma \ref{lem:arreg} the set $\Omega_r$, which contains $S_{\nseq}$ and is contained in $[0,1]^2$, is Ahlfors $2$-regular when equipped with the (restricted) Lebesgue measure and hence doubling. 

With {\bf (i)} now settled, we show the local Poincar\'e inequality {\bf (ii)}. Based on our choice of $j_0$ and $k$, we have %

%\sout{
%$\mathcal{R}_{x,r}$ be the set of all $R \in \nmR_{\nseq,k}$ so that $R \cap B(x,r) \neq \emptyset$ and define
%}
%$$
%\Omega_r =
%\begin{cases}
%S_{k, {\nseq}}, &
%\text{if } \mathcal{R}_{x,r} = \emptyset,
%\\
%\R^d \setminus \bigcup_{R \in \mathcal{R}_{x,r}} R, & 
%\text{if } \mathcal{R}_{x,r} \neq \emptyset.
%\end{cases}
%$$
%
%\begin{note}[\underline{Jasun, 8 May 2018}]
%{\edit Similar obstacle removals (at all higher scales) need to be done for the results following Definition \ref{def:epscon}.}
%\end{note}
%
$$
%d(R,R') ~\geq~
%\frac{1}{3}
s_{k-1} ~=~
%\frac{n_k}{3}
n_ks_k ~>~
\frac{2^5\sqrt{d}}{ %3
\delta}
\frac{\delta}{2\sqrt{d}}r ~\geq~
%2
2^4r
$$
in which case Lemma \ref{lem:squaresep} implies
\begin{align} \label{eq:sierpinskiseparation}
d(R,R') \geq \frac{1}{3}s_{k-1} > 4r
\end{align}
for all $R, R' \in \nmR_{\nseq,k}$ with $R \neq R'$. Thus for each $x \in S_{\nseq}$ there is at most one 
$R \in \nmR_{\nseq,k}$ 
%so that $R \cap B(x,2r) \neq \emptyset$.
that meets $B(x,2r)$.
%element in $\mathcal{R}_{x,r}$. 
Also, if such a cube $R$ exists, then similarly from Lemma \ref{lem:squaresep} it follows that
$$
d(R,\partial [0,1]^d) \geq 2r
$$
%and
%$$
%B(x,2r) \cap \partial [0,1]^d = \emptyset.
%$$
so $B(x,2r)$ would not intersect $\partial [0,1]^d$.

%Consider now $x$ arbitrary, we wish to show the Poincar\'e inequality for a ball 
Now, for arbitrary $x \in S_{\nseq}$, fix a ball
$B(x,s) \cap \Omega_r$ with $s \leq 2r$. As before, at most one $R$ can meet $B(x,s)$, so 
$$
B(x,s) \cap \Omega_r = B(x,s) \cap Y_i
$$ 
holds for some $i=1, 2, 3$ as above, 
and Equation \eqref{eq:atmostoneR} is precisely
the local Poincar\'e inequality for $\Omega_r$ at scale $s$, 
as desired.

%, as well as $A$-uniform in the sense of Definition \ref{def:unifdom} and hence satisfies a $(1,1)$-Poincar\'e inequality by Theorem \ref{thm:unifcon}, with constants independent of $r$. 
Finally, we show the density bound {\bf (iii)};
that is, condition (2) in Definition \ref{def:PIfilling}.
%The intersection 
First observe that
$B(x,r) \cap \Omega_r$ contains a cube with side length $r/(4\sqrt{d})$, in which case it holds that
\begin{equation}\label{eq:volumeomega}
\lambda(B(x,r) \cap \Omega_r) \geq \frac{r^d}{4^d\sqrt{d}^d}.
\end{equation}
%\begin{note}[\underline{Jasun, 23 May 2018}]
%{\edit
%Refer instead to Lemma 4.5 to justify \eqref{eq:volumeomega}.
%}
%\end{note}
%It remains to check condition (2) in Definition \ref{def:PIfilling}.
%
Now, %let
consider all remaining $(k+1)$'th order subcubes that are sufficiently near $x$, i.e.\
$$
\mathcal{T}_{x,r} = \{T \in \mathcal{T}_{k+1,\nseq} ~|~ T \cap B(x,(1-\delta)r) \neq \emptyset\}.
$$
From our previous choice of $k$, we have for all $T \in \mathcal{T}_{x,r}$ that
$$
\diam(T) \leq 2\sqrt{d}s_{k+1} < \delta r,
$$
and thus $T \subset B(x,r)$. The cubes in $\mathcal{T}_{k+1,\nseq}$ that are contained in $\Omega_r \cap B(x,r)$ thus cover $\Omega_r \cap B(x,r)$ except for a portion of the annulus $B(x,r) \setminus B(x,(1-\delta)r)$ as well as the removed %squares
cubes
in $\mathcal{R}_{k+1,\nseq}$ which intersect $B(x,r)$. Let $\mathbf{R}$ be the union of such removed cubes. These extra portions have small volume, as we will see.

 Each cube %of the cubes 
in $\mathcal{R}_{k+1,\nseq}$ that intersects $B(x,r)$ is contained in a cube %disjoint cubes 
in $\mathcal{T}_{k,\nseq}$ of side length $s_k$, and such larger cubes have pairwise-disjoint interiors. If $r \leq s_k$ then there are at most $3^d$ such cubes, so for dimensions $d \geq 2$ we have
$$
\lambda(\mathbf{R}) \leq 
3^d s_{k+1}^d \leq 
3^d\Big(\frac{\delta r}{2\sqrt{d}}\Big)^d \leq
\Big(\frac{3\delta }{2\sqrt{2}}\Big)^dr^d \leq
(\sqrt{2}\delta)^dr^d.
%\delta^d r^d.
$$
If $r \geq s_k$ then there are at most %$\frac{(2\sqrt{d}r)^d}{s_k^d}$ 
$(\frac{2r}{s_k}+2)^d$
such cubes.  Recalling that $s_k = n_{k+1}s_{k+1}$, our previous choices of $j_0$ and $k$ now yield
\begin{align*}
\lambda(\mathbf{R}) \leq 
%\frac{(2\sqrt{d}r)^d}{s_k^d} s_{k+1}^d \leq \frac{(2\sqrt{d})^d}{n_{k+1}^d} r^d \leq  \delta^d r^d.
\Big( \frac{2r}{s_k}+2 \Big)^d s_{k+1}^d =&~
\frac{2^ds_{k+1}^d}{s_k^d}(r+n_{k+1}s_{k+1})^d \\ \leq&~
\frac{2^d}{n_{k+1}^d}\Big(1+\frac{n_{k+1}\delta}{2\sqrt{d}}\Big)^dr^d =
2^d\Big(\frac{1}{n_{k+1}}+\frac{\delta}{2\sqrt{d}}\Big)^dr^d \leq
\frac{2^d\delta^d}{\sqrt{d}^d}r^d \leq
(\sqrt{2}\delta)^dr^d.
\end{align*}
Note that $\delta < \frac{\epsilon}{4} < \frac{1}{4}$ from before implies that $2-\delta > \sqrt{2}$ as well as
$$
\sqrt{2}\delta \leq
\big(1-(1-\delta)\big)(2-\delta) \leq
\big(1-(1-\delta)\big) \sum_{m=0}^{d-1} (1-\delta)^m =
1-(1-\delta)^d,
$$
so the previous paragraph, the choice of $\delta$ from before, and 
%estimates 
\eqref{eq:deltaannulus}--\eqref{eq:volumeomega} imply
\begin{eqnarray*}
\sum_{T \in \mathcal{T}_{x,r}} \lambda(T) &\geq&
\lambda(B(x,r) \cap \Omega_r) - \lambda(B(x,r)\setminus B(x,(1-\delta)r)) - \lambda(\mathbf{R})\\ &\geq&
\lambda(B(x, r) \cap \Omega_r) - \frac{\epsilon r^d}{4^{d+1}\sqrt{d}^d} - %\delta^d r^d 
(\sqrt{2}\delta)^dr^d
\\ &\geq&
\left(1-\frac{\epsilon}{2}\right)\lambda(B(x,r) \cap \Omega_r).
\end{eqnarray*}
  Also, from Lemma \ref{lem:basicvol} 
  for every $T \in \mathcal{T}_{x,r}$ 
  we get
$$
\lambda(T\cap S_{\nseq}) ~\geq~
\exp\Big(
-2\sum_{i=k}^\infty \frac{1}{n_i^d}
\Big) \lambda(T) ~\geq~
\left(1-\sum_{i=j_0}^\infty \frac{2}{n_j^d}\right)\lambda(T) ~\geq~
\Big(1-\frac{\epsilon}{4^{d}\sqrt{d}^d}\Big)\lambda(T)
$$
and as a result,
\begin{eqnarray*}
     \lambda(B(x,r) \cap S_{\nseq}) &~\geq~& \sum_{T \in \mathcal{T}_{x,r}} \lambda(T \cap S_{\nseq}) \\
                                &~\geq~& \left(1-\frac{\epsilon}{2}\right) \sum_{T \in \mathcal{T}_{x,r}} \lambda(T %\cap S_{\nseq}
                                ) \\
                                &~\geq~& \left(1-\frac{\epsilon}{2}\right)^2%(1-\frac{\epsilon}{2})
                                \lambda(B(x,r) \cap \Omega_r) \geq (1-\epsilon) \lambda(B(x,r) \cap \Omega_r).
  \end{eqnarray*}
Thus subtracting $\lambda(B(x,r) \cap \Omega_r)$ from both sides yields the result.
\end{proof}

%\marginpar{\edit \hspace{.5in} moved pf}
%\marginpar{\edit \hspace{.5in} from above}
%{\edit 
%For Sierpi\'nski sponges in $\R^n$, 
The equivalence of Conditions (1) through (3) in Theorem \ref{thm:ndimSiercarpet} is now easy to see.
%}

%\item
\begin{proof}[%{\edit Proof}
Proof
%\sout{equivalence} 
of
$(1) \Leftrightarrow (2) \Leftrightarrow (3)$ 
%{\edit 
in Theorem \ref{thm:ndimSiercarpet}
%}
]
%\sout{Now, }
%{\edit 
The statement $(2) \Rightarrow (3)$ is trivial.  Note that
%} 
the contrapositive of (4) also proves that $(3) \Rightarrow (1)$. 
%\sout{Thus, we are left with proving $(1) \Longrightarrow (2)$. We will do this in a sequence of lemmas below. First,}

%{\edit 
As for $(1) \Rightarrow (2)$,
%}
Lemma \ref{lem:arreg} shows that $S_{\nseq}$ is in fact Ahlfors $d$-regular. Then Lemma
\ref{lem:unifasympt} shows that $S_{\nseq}$ is asymptotically $1$-Poincar\'e fillable, and thus by Theorem \ref{thm:PIthm} it satisfies a local $(1,p)$-Poincar\'e inequality at scale $r_0 = r_0(p,d, \nseq)$ for any $p>1$.
%\sout{, where $r_0(p,d,\nseq)$ depends on $p,d$ and $\nseq$}. 
However, since $S_{\nseq}$ is connected and uniformly doubling, then as a consequence of \cite[Theorem 1.3]{bjornlocal} the entire space $S_{\nseq}$ satisfies a (global) $(1,p)$-Poincar\'e inequality. Note that, while the reference \cite{bjornlocal} deals with so called "semi-local" inequalities, in our case of bounded diameter these suffice for a global inequality.
\end{proof}

\subsection{General metric carpets}

In this section, we extend the proof of the previous section to give    examples of Sierpi\'nski %\sout{-like} 
sponges in general metric spaces. In particular, we prove Theorem \ref{thm:metrisponge}. 

The crucial role here is played by uniform domains. 
We %remark, 
note %here
that conventionally, 
uniform domains are assumed to be open sets. 
Our definition,
however, will allow for closed sets
as well. Indeed, one can show that if a closed set $\Omega$ is uniform then its interior ${\rm int}(\Omega)$ is uniform.  The converse holds, at least in doubling metric spaces, if $\Omega$ is the closure of its interior.
It is worth noting that, on the other hand, a closure of a non-uniform domain may be uniform, such as in the case of a slit disk. However, our starting point will always be closed sets.

%We now recall a key property of cubes (and their complements) that will be used later.
%in later proofs.

\begin{definition}[Uniform Domains]\label{def:unifdom}
Given a metric space $X = (X,d)$, $A > 0$, a subset $\Omega \subset X$, and points $x, y \in \Omega$, a continuous curve $\gamma \co [0,1] \to \Omega$ % \subset X$ 
is called 
an {\sc $A$-uniform curve (with respect to $x$, $y$, and $\Omega$)} if it connects $x$ and $y$ with
$\diam{\gamma} \leq A d(x,y)$ and
\begin{equation}\label{eq:unifdom}
d(\gamma(t),\Omega^c) \geq A^{-1}\min(\diam(\gamma|_{[0,t]}), \diam(\gamma|_{[t,1]})).
\end{equation}
We say that $\Omega$ is {\sc $A$-uniform up to scale $r$} if for all $x,y \in \Omega$  with $d(x,y) < r$ there exists an $A$-uniform curve with respect to $x$, $y$, and $\Omega$.  

Lastly,
%We say that 
$\Omega$ is {\sc $A$-uniform} if 
%the same holds for all $x,y \in \Omega$ with $d(x,y) \leq r$.
it is $A$-uniform up to scale $r$, for all $r > 0$.
\end{definition}

Alternative definitions, and their mutual equivalence, are discussed in \cite{vaisala1988,martiosarvas}.  For example, if the space is doubling and quasi-convex, then $\gamma$ could be assumed to be a rectifiable curve and diameter could be replaced with length in the definition. So in the context of uniformity (and only in this context), by a ``curve'' we allow for curves to be continuous only, and not necessarily Lipschitz.

We remark, that in the case $\Omega=X$, the condition is vacuously satisfied if $X$ is quasi-convex, as the distance to an empty set is interpreted to be infinity.

For us, %the reason we use uniformity is that such 
uniform domains are quite flexible to construct, and they %give a natural condition for the domain to preserve a Poincar\'e inequality. 
inherit good geometric properties from the spaces containing them.
In particular, there is the following version of  \cite[Theorem 4.4]{bjornuniform}.

\begin{theorem}[Bj\"orn-Shanmugalingam] \label{thm:unifcon}
Let $1 \leq p < \infty$.  If $(X,d,\mu)$ is $D$-doubling and satisfies a $(1,p)$-Poincar\'e inequality with constant $C$, and if $\Omega$ is a closed, $A$-uniform domain up to scale $r_0$ in $X$ then, with its restricted measure and metric, $\Omega$ is also $\nD$-doubling and satisfies a $(1,p)$-Poincar\'e inequality at scale $r_0/2$ with constants %$\nC_{PI}$.
%Here the constants $\nD$ and $\nC_{PI}$ have dependencies 
$\nD ={\nD}(D,A)$ and $\nC = {\nC}(D,C,A,p)$.
\end{theorem}

\begin{remark} \label{rmk:localtosemi}
To be clear, %In fact, 
in \cite[Theorem 4.4]{bjornuniform} only the global 
case of 
$r_0 = \infty$ and an open set $\Omega$ 
is explicitly discussed. Next, we briefly indicate the required modifications.  Indeed, uniformity implies that $\partial \Omega$ is porous, and thus has measure zero. See, e.g. \cite[Lemma 3.2]{bonkheinrho} for a result on and definition of porosity. Then, as remarked before Definition \ref{def:unifdom}, $\tilde{\Omega}={\rm int}(\Omega)$ is an open uniform domain, and satisfies the Poincar\'e inequality at scale $r_0/C$ by the argument in \cite[Theorem 4.4]{bjornuniform}. Since $\partial \Omega$ has measure zero, and $\tilde{\Omega}$ is dense in $\Omega$, the Poincar\'e inequality and doubling also hold for $\Omega$.  Following their proof, these properties hold initially at some scale $r_0/C$ with a constant $C$. 

However, 
following the proof of \cite[Theorem 4.4]{bjornlocal}
and under the additional hypothesis that $\Omega$ is metric doubling and $A$-uniform up to scale $r_0$, we may upgrade the scale to $r_0$ with a uniform constant.  In \cite{bjornlocal}, the proof uses properness and connectivity to get non-quantitative bounds on the number of balls involved and that need to be chained. However, the only modification needed is a quantitative bound on the number of such balls needed, which follows here from doubling and uniformity. We refer the reader to the proof of \cite[Theorem 4.4]{bjornlocal} for more details.
\end{remark}

%\sout{The usefulness here is that} 
\begin{remark} \label{rmk:egunif}
There are many examples of uniform domains.

\begin{enumerate}
 \item Bounded convex subsets of $\R^d$ are uniform, where the uniformity constant $A$ depends on the 
%\sout{thickness}
eccentricity of the convex subset.
 \item  Compact domains 
 % and their complements
 with Lipschitz-regular boundaries in $\R^n$ 
%and \sout{the Heisenberg}
are uniform, 
as well as their complements. 
The constants depend quantitatively on the Lipschitz constants of the local representations and the sizes of the charts covering the boundary.

 \item $C^{1,1}$-compact domains and their complements in any step-2 Carnot group, including the (first) Heisenberg group, are uniform with respect to their Carnot-Carath\'eodory metrics \cite{morbidelliheis}.  Here, $C^{1,1}$-regularity is with respect to the %ambient 
 Euclidean smooth structure.
 For an introduction to Carnot groups, we refer the reader to \cite{morbidelliheis}. See also Section \S\ref{sect:heisenberg} for a discussion of the Heisenberg group (from a purely metric space perspective). %and %% for the statement and definition of step-2 Carnot group.
 \item Let $f \co X \to Y$ be a quasisymmetric map between metric spaces $(X,d)$ and $(Y,d')$, i.e.\ that there is a homeomorphism $\eta \co [0,\infty) \to [0,\infty)$ with necessarily $\eta(0) = 0$ and $\eta(t) \to%{\longrightarrow} 
 \infty$ as $t\to\infty$ so that
$$
\frac{ d'(f(x),f(y)) }{ d'(f(x),f(z)) } \leq
\eta\Big(
\frac{ d(x,y) }{ d(x,z) }
\Big)
\text{ for all } x,y,z \in X.
$$
If $\Omega$ is a uniform domain in $X$ then $f(\Omega)$ is also uniform in $Y$. The constants are quantitative with respect to the uniformity of $\Omega$ and the distortion function $\eta$.

%\item 
%Now, 
In particular, 
if $f \co \R^d \to \R^d$ is a $K$-quasiconformal map, then it is $\eta$-quasisymmetric \cite{vaisalabook}, and so $f(B(0,1))$ and $f(\R^d \setminus B(0,1))$ are uniform.
\item {%\color{red}
Recently, T.\ Rajala \cite{rajalaunif} has proven that in any quasiconvex doubling space there exists an abundance of uniform domains. 
%In fact so many as to approximate any other domain.  The control of constants is inexplicit, but likely can 
In fact, every bounded domain can be approximated by uniform domains in the Hausdorff metric.  (The dependence on constants is not given explicitly there, but can likely 
be made explicit in some cases.%
)}
\end{enumerate}
\end{remark}

Our main theorem
%{\edit 
has an immediate consequence
%}
for uniform domains,
%\sout{, could be articulated as follows.}
%{\edit 
or more generally, what we call ``almost-uniform'' domains.
%}

\begin{definition}\label{def:epscon}
%We call a 
A subset $Y$ of $X$  is called {\sc $(\epsilon,A)$-almost uniform at scale $r_0$} if for every $r \in (0,r_0)$ there is a
 connected, closed
subset $\Omega_r$ of $X$ that is $A$-uniform up to scale $4r$, and so that $Y \subset \Omega_r$ and for every $x \in Y$ it holds that
\begin{equation}\label{eq:epscondens}
\frac{\mu(\Omega_r \cap B(x,r) \setminus Y)}{\mu(\Omega_r \cap B(x,r))} < \epsilon. 
\end{equation}
\end{definition}

\begin{corollary}\label{cor:almostunifcon} Let 
%$A,D,C_{PI}\geq 1$ and $p \in [1,\infty)$  be arbitrary
$(p,D,C,A)$ be
structural constants and $r_0>0$.

If $(X,d,\mu)$ is a $D$-doubling space that satisfies %with
a $(1,p)$-Poincar\'e inequality with constant $C$, then for any $q>p$ there exists $\epsilon>0$, depending on the structural constants, such that if $Y \subset X$ is $(\epsilon,A)$-almost uniform at scale $r_0 > 0$, then $Y$ with its restricted metric and measure %is a $q$-PI-space 
satisfies a $(1,q)$-Poincar\'e inequality at scale $r_1 = r_1(D,C,A,r_0)$.

Moreover, if $Y$ is $(\epsilon,A)$-almost uniform for all $\epsilon \in (0,\frac{1}{2})$,
%}
then it satisfies a $(1,q)$-Poincar\'e inequality for every $q>p$.
\end{corollary}

\begin{proof}
By applying Definition \ref{def:epscon} and Theorem \ref{thm:unifcon} to $Y$, for each $r \in (0,r_0)$ the filling $\Omega_r$ with its restricted measure is $\nD$-doubling at scale $2r$ and satisfies a $(1,p)$-Poincar\'e inequality 
at scale $2r$ with constant
$\overline{C} = \overline{C}(D,C,A,p)$ 
%\sout{
%depending on the original scale and the structural constants, but where $\overline{C}_{PI}$ is 
%}
independent of 
$r$.
%\sout{that scale}.
Thus, together with %\eqref{eq:epsconinc} 
$Y \subset \Omega_r$
we see that for each $r>0$ the filling $\Omega_r$ %$\Omega_{x,r}$ 
satisfies Definition \ref{def:PIfilling} and thus the claim follows from Theorem \ref{thm:PIthm}.
\end{proof}

%{\edit
Instead of prescribing a priori ``fillings'' to subsets in the sense of Theorem \ref{thm:PIthm}, we now return to the perspective in the Introduction (\S\ref{subsect:intrometriccarnot}) and
consider constructions on general PI-spaces akin to Sierpi\'nski sponges.  In this original but opposite viewpoint, we first consider \textit{complements} of certain domains. %and give natural conditions on them that guarantee their almost-uniformity.%
%}
%\sout{Next, we give natural conditions that give almost uniform sets. We will need the complements of certain domains to be uniform and thus define as follows.}

\begin{definition}\label{def:exteriorunif}
Let $A > 0$.  An open, bounded subset $\Omega$ of a metric space $X$ is called {\sc $A$-co-uniform} if $X \setminus \Omega$ is $A$-uniform and $\partial \Omega$ is connected.
\end{definition}

To define ``metric sponges'' in terms of dyadic decompositions is nontrivial, as compared with Sierpi\'nski sponges in $\R^d$.
In general, metric measure spaces need not admit dyadic decompositions; even in the case of doubling measures, the cells of a Christ dyadic decomposition do not necessarily form a collection of uniform domains with a uniform constant. 

We therefore define a construction in terms of removed sets (or ``obstacles'') instead.
As %one often lacks 
there is no guarantee of self-similarity in an arbitrary metric space, these sets are %also 
given in terms of a strengthening of item (2) of Theorem \ref{thm:planar}, the uniform relative separation property applied to co-uniform domains instead of quasidisks; see item (5) below.

\begin{defn} \label{def:sparsecoll}
%Fix constants $\delta,L > 0$ and $A \geq 1$. 
Let $\nseq = \{n_k\}_{k=1}^\infty$ be a sequence of positive integers, and consider scales, 
given inductively as
$s_0=1$ and
$$
s_k = \frac{1}{n_k}s_{k-1}
$$%
%$$
%s_k =
%\begin{cases}
%1, & \text{if } k = 0,
%\\
%\prod_{i=1}^k \frac{1}{n_k}, & \text{if } k \geq 1
%\end{cases}
%$$
for $k \in \N$. 
A sequence of collections of domains $\{\mR_{\nseq,k}\}_{k=1}^\infty$ in $\Omega$ forms a {\sc uniformly $\nseq$-sparse collection of 
co-uniform sets in $\Omega$} if
there exist
constants $\delta,L > 0$ and $A \geq 1$ so that
for each $R \in \mR_{\nseq,k}$:\
\begin{enumerate}
 \item $R \subset \Omega$;
 \item $R$ is $A$-co-uniform and $\Omega$ is $A$-uniform; 
 \item $\diam(R) \leq Ls_k \diam(\Omega)$;%.
 \item $d(R,\Omega^c) \geq \delta s_{k-1}\diam(\Omega)$;  \item if moreover $R' \in \mR_{\nseq,k'}$ with $k \geq k'$, then
$d(R,R') \geq \delta s_{k-1} \diam(\Omega)$.
\end{enumerate}
Moreover, %the collection 
$\{\mR_{\nseq,k}\}$ is called {\sc dense} in $\Omega$ whenever $\bigcup_{k \in \N} \bigcup_{R \in \mR_{\nseq,k}}R$ is dense in $\Omega$.
We lastly define 
$$
S_\nseq \defeq \Omega \setminus \bigcup_k \bigcup_{R \in \mathcal{R}_{\nseq,k}} R.
$$
\end{defn}

It is worth mentioning here that Condition (5) appears as Equation \eqref{eq:sierpinskiseparation} and was crucial in the proof for Sierpi\'nski sponges.  It will be similarly useful in the sequel.

Recall that Theorem \ref{thm:metrisponge} asserts that:
\begin{itemize}
\item[] \textit{On an Ahlfors-regular $p$-PI space, %metric measure space supporting a $(1,p)$-Poincar\'e inequality, 
the complement of a uniformly sparse collection of co-uniform sets is also an Ahlfors-regular $p$-PI space. %and supports a $(1,p)$-Poincar\'e inequality, too.
}
\end{itemize}
As an initial, geometric idea of the proof,
we now state our main technical tool.

\begin{theorem}\label{thm:cutout}
Fix structural constants $A_1,A_2,C,D \geq 1$.
Let $X$ be a $C$-quasiconvex, $D$-metric doubling metric space, let $\Omega$ be an $A_1$-uniform subset of $X$, and let $S$ be a bounded, %subset of $\Omega$ that is 
$A_2$-co-uniform subset of $X$.  If
$$
\overline{S} \subset {\rm int}(\Omega)
%N_{\epsilon{\rm diam}(S)}(S) ~\subset~ {\rm int}(\Omega)
$$
then $\Omega \setminus S$ is %a 
$A'$-uniform %domain 
in $X$,
with dependence $A' = A'(A_1,A_2,C,D, \frac{d(S,\Omega^c)}{\diam(S)})$.
%\sout{
%The uniformity constant depends only on 
%%$\epsilon$. 
%the structural constants, as well as the relative distance $\frac{ {\rm dist}(S, \partial\Omega) }{ {\rm diam}(S) }$ of $S$ from the boundary of $\Omega$.
%}
\end{theorem}

For clarity, we postpone its proof to Appendix \ref{a:cutout}. %We show that cutting out sets whose complements are uniform preserves uniformity. The proof is somewhat similar to \cite[Theorems 7.1--7.5]{bonkuniform}, although the conclusion we obtain is stronger.
Applying it to an induction argument, however, yields the following useful result:\ 
cutting out a finite collection of co-uniform domains preserves uniformity.
For simplicity, it is formulated in terms of the
%\begin{remark} \label{rmk:reldist}
%recall the notion of 
relative distance, from item (2) of Theorem \ref{thm:planar}:
$$
\Delta(E,F) := \frac{d(E,F)}{\min\{\diam(E),\diam(F)\}}.
$$
%\end{remark}

\begin{corollary} \label{cor:unifmultiple} 
Fix structural constants 
$A_1,A_2,C,D \geq 1$.
Let $X$ be a
$D$-metric doubling, $C$-quasiconvex metric space, let
$\Omega$ be a $A_1$-uniform domain in $X$
and for $i=1, \dots, N$ let $S_i$ be a $A_2$-co-%exterior 
uniform domain in $X$ such that $\Delta(S_i,S_j) \geq \epsilon$ for $i \neq j$ and 
{%\color{red}
$d(S_i, \Omega^c) \geq \epsilon \diam(S_i)$.%
}
Then $\Omega \setminus \bigcup_{i=1}^N S_i$ is also uniform in $X$.
\end{corollary}

\begin{proof} 
 Order the elements $S_i$ %by length 
 so that $\diam(S_i) \leq \diam(S_j)$ for $i \geq j$ and %. Further, 
define recursively
$$
\Omega_{i} =
\begin{cases}
\Omega \setminus S_1, & \text{ if } i=1
\\
\Omega_{i-1} \setminus S_i, &
\text{ if } 2 \leq i \leq N.
\end{cases}
$$
Put $A_0' = A_1$.  By Theorem \ref{thm:cutout} we have that $\Omega_1$ is $A_1'$-uniform with $A_1' = A'(A_0',A_2,C,D,\epsilon)$, where $A'$ is now treated as a function of the given parameters.

Proceed by induction and assume now that 
%\sout{
%we have shown, that 
%}
$\Omega_{n}$ is 
$A_n'$-uniform with dependence $A_n' = A'(A_{n-1}',A_2,C,D,\epsilon)$.
%\sout{
%for some recursively computable $A_n$, then we show that $\Omega_{n+1}$ is $A_{n+1}$-uniform. 
%}
By the separation condition, we know that
$$
d(S_{n+1}, \Omega_{n}^c) \geq \epsilon \diam(S_{n+1}).
$$
%Then,
%$$
%N_{\epsilon \diam(S_{n+1})}(S_{n+1})
%\subset \Omega_{n}.
%$$
Therefore, again by Theorem \ref{thm:cutout} we have that $\Omega_{n+1}$ is $A_{n+1}$-uniform 
%for some $A_{n+1}$ 
with dependence $A_{n+1}' = A'(A_n',A_2,C,D,\epsilon)$.
%\sout{
%that depends quantitatively on the structural constants $A_S,A_n, \epsilon,L, D$ and $C$.
%}
\end{proof}

As in the %strategy 
proof 
of Theorem \ref{thm:ndimSiercarpet}, we need %analogous results as 
analogues of
Lemmas \ref{lem:basicvol} and \ref{lem:arreg},
but for uniformly sparse collections of co-uniform sets instead of
Sierpi\'nski sponges.  Their proofs %are also similar, so they are postponed 
being similarly straightforward, we postpone them
to Appendix \ref{a:cutout} %as well.
and focus on how they imply Theorem \ref{thm:metrisponge} instead.

\begin{lemma} \label{lem:unifarreg}
Let $\Omega \subset X$ be an $A$-uniform subset, and assume that  $(X,d,\mu)$
%{\edit $\Omega$} 
is Ahlfors $Q$-regular with constant $C_{AR}$. Then $\Omega$ is Ahlfors $Q$-regular with constant $C_{AR, \Omega}=(4A)^Q C_{AR}$ when equipped with the restricted measure and metric.
\end{lemma}

\begin{lemma} \label{lem:volobstacles} %Assume $X$ is $C_{AR}$-Ahlfors regular.
Under the hypotheses of Theorem \ref{thm:metrisponge}, if $r\geq s_k {\rm diam}(\Omega)$ then
$$
\mu\Big(B(x,r) \cap \bigcup_{l=k+1}^\infty \bigcup_{R \in \mR_{\nseq,l}} R\Big) ~\leq~
C_\delta r^Q \sum_{i=k+1}^\infty \frac{1}{n_i^Q},
$$
holds for each $x \in S_{\nseq}$, where $C_{\delta}$ depends quantitatively on $C_{AR}$ and $Q$, as well as on $\delta$ and $L$ from Definition \ref{def:sparsecoll}.
\end{lemma}

We are now ready to verify the Poincar\'e inequality, for metric space sponges formed from uniformly sparse collections of co-uniform sets.

\begin{proof}[Proof of Theorem \ref{thm:metrisponge}] Scale the statement so that ${\rm diam}(\Omega)=1$. The domains $Y_1 = X$ and $Y_2 = \Omega$ and $Y_3 = X \setminus R$ 
are uniform domains with some constant $A$ by definition,
for any $R \in \bigcup_{k=1}^\infty \mR_{\nseq,k}$. 
So, each $Y_i$ is uniformly Ahlfors $Q$-regular with constant $C_{AR,Y}$ by Lemma \ref{lem:unifarreg}. Let $C$ be the constant of the Poincar\'e inequality of $X$, and $D$ be the doubling constant of $X$. These fix the structural constants $(p,D,C,A)$ in Corollary \ref{cor:almostunifcon}. Applying this corollary yields an $\epsilon>0$.

Local doubling and Poincar\'e inequalities will follow once we show that $S_{\nseq}$ is almost uniform. %Thus, 

Let $C_\delta$ be the constant from Lemma \ref{lem:volobstacles}.
Choose first $K_\epsilon \in \N$ so large that
$$
\sum_{i=K_{\epsilon}}^\infty \frac{1}{n_i^Q} \leq \frac{\epsilon }{C_\delta C_{AR,Y}}
$$
and so that $n_i \geq \frac{2^5 A}{\delta}$ for every $i \geq K_\epsilon$. Then, define $r_0 = \delta s_{K_\epsilon+1}/(2^4 AL)$. Now, we show that $S_\mathbf{n}$ is 
$(\epsilon,A)$-almost 
uniform at level $r_0$, with the aforementioned fixed structural constants. To that avail, let $x \in S_\mathbf{n}$ and $r \in (0,r_0)$ be arbitrary. Choose $k \geq K_\epsilon$ so that 
$$
\frac{\delta s_{k}}{2^4{A}} < r \leq \frac{\delta s_{k-1}}{2^4{A}}.
$$
{%\color{blue}
Analogously as for Sierpi\'nski sponges, put
$$
\nmR_{\nseq, k} = \bigcup_{l=1}^{k} 
%\bigcup_{R \in \mathcal{R}_{\nseq,l}} R
\mathcal{R}_{\nseq,l}
\text{ and }
%\Omega_r = X
{%\color{red} 
S_{\nseq,l} =
\Omega%
}
\setminus %\bigcup_{l=1}^{k+1} \bigcup_{R \in \mathcal{R}_{\nseq,l}} 
\bigcup_{R \in \nmR_{\nseq,l}}
R
$$
and just as in the proof of Lemma \ref{lem:unifasympt},
define the filling
$\Omega_r \defeq S_{\nseq,l}$.%
} 

Since ${8Ar \leq \delta s_{k-1}/2}$, there %can be 
is
at most one $R \in  
\nmR_{\nseq, k}$ which intersects $B(x,8Ar)$, 
so
\begin{equation} \label{eq:atmostoneR2}
\Omega_r \cap B(x,8Ar) = Y_i \cap B(x,8Ar)
\end{equation}
for some $i=1,2,3$.  %Further, 
Since $Y_i$ is $A$-uniform, any $y \in B(x,4r)$ can be connected to $x$ with an $A$-uniform curve with respect to $Y_i$, %. However, 
so
by \eqref{eq:atmostoneR2} that same curve is an $A$-uniform curve with respect to $\Omega_r$. That is, $\Omega_r$ is $A$-uniform at scale $4r.$ %since $B(x,8Ar) \cap \Omega_r = Y_i \cap B(x,8Ar)$.

% If there is one such $R$ define $\Omega_{x,r} = X \setminus R$, and otherwise $\Omega_{x,r}=\Omega$. By the first paragraph these domains are $A$-uniform and contain $B(x,r) \cap S_{\nseq}$. 
So to satisfy Definition \ref{def:epscon} we only need to check the density
 condition \eqref{eq:epscondens}. But, by the choice of $K_\epsilon$, we have $s_{k+1} \leq r$, and thus by Lemma \ref{lem:volobstacles}
$$
\mu\bigg(B(x,r) \cap \bigcup_{l=k+1}^\infty \bigcup_{R \in \mR_{\nseq,l}} 
R\bigg) \leq 
C_\delta r^Q \sum_{i=k+1}^\infty \frac{1}{n_i^Q} \leq \frac{\epsilon}{C_{AR,Y}} r^Q.
$$
%This allows us to estimate the volume of $\Omega_{r} \setminus S_{\nseq} \cap B(x,r) \subset B(x,r) \cap \bigcup_{l=k+2} \bigcup_{R \in \mR_{\nseq,l}} R$.
Since $\Omega_{r} \setminus S_{\nseq}$ lies in $\displaystyle 
\bigcup_{l=k+1}^\infty \bigcup_{R \in \mR_{\nseq,l}} 
R$, we estimate its density in $B(x,r)$ to be
\begin{align*}
\frac{\mu(\Omega_{r} \setminus S_{\nseq} \cap B(x,r))}{\mu\left(\Omega_{r} \cap B(x,r)\right)} &\leq  
\frac{\displaystyle
\mu\bigg(B(x,r) \cap 
\bigcup_{l=k+1}^\infty \bigcup_{R \in \mR_{\nseq,l}} 
R\bigg)}{\mu(\Omega_{r} \cap B(x,r))} %\\&
\leq \frac{\frac{\epsilon}{C_{AR,Y}} r^Q}{\frac{1}{C_{AR,Y}} r^Q} \leq \epsilon.
\end{align*}
Here, we again used %that $\Omega_r \cap B(x,r) = Y_i \cap B(x,r)$ for some $i$ 
\eqref{eq:atmostoneR2}
and that $Y_i$ are Ahlfors $C_{AR,Y}$-regular, 
for some $i = 1,2,3$.

This %completes demonstrating 
verifies all
the conditions in Definition \ref{def:epscon}, %from which 
in which case
the conclusion of the Theorem follows by Corollary \ref{cor:almostunifcon}. Finally,
%Use the filling $\Omega_{x,r}=\Omega$ or , if there is an obstacle at scale $r$ or larger. There can be at most one, if chosen correctly.
%Then by Lemma \ref{lem:unifarreg} we get Ahlfors regularity of $\Omega_{x,r}$, and in either case $\Omega_{x,r}$ is uniform, by assumption. Thus, we are left to show %the density bound in order to show that the set is asymptotically uniform and then we can apply Corollary \ref{cor:almostunifcon}.
the remark on density is trivial, and the remark on the exponent $p$ follows from Keith-Zhong \cite{keith2008poincare}, since our spaces are complete. To be more specific, Keith-Zhong is applied first to $X$ to improve its Poincar\'e inequality, and then the first part is applied to obtain a better inequality for the fillable set $Y$. The density is also explained in more detail in the context of the Heisenberg group below.

Finally, an estimate as above using Lemma \ref{lem:volobstacles} gives the Ahlfors regularity of $S_{\nseq}$ for balls of size $r<r_0$. Since $\Omega$ is bounded, 
the Ahlfors regularity then follows immediately. Indeed, the upper bound in Ahlfors regularity follows from that of $X$, and the lower bound from $\mu(B(x,r)) \geq \mu(B(x,r_0))$ if 
$r \geq r_0$. Further, the local Poincar\'e inequality upgrades to a Poincar\'e inequality (since $\Omega$ is bounded) from \cite[Theorem 7.3]{bjornlocal} once we see that $S_{\nseq}$ is connected. To see this let $x,y \in S_{\nseq}$ be arbitrary, and let $\gamma$ be any continuous curve in $\Omega$ connecting $x,y$. Let

$$E = (\gamma \cap S_{\nseq}) \cup \bigcup_{k = 1}^\infty \bigcup_{R \in \mR_{\nseq,k}, R \cap \gamma \neq \emptyset} \partial R.$$
The set $E$ is easily seen to be a connected compact subset of $S_{\nseq}$ (since $\partial R$ are connected by assumption), and thus $S_{\nseq}$ is connected.
%The density follows from Lemma \ref{lem:volobstacles}, and since we chose $r<r_0$ for some very small $r_0$, which forces the sum to be less than any desired $\epsilon$. {\color{red} ADD DETAIL.}
\end{proof}

\subsection{ Non-Euclidean %Heisenberg 
examples:\
Heisenberg meets Sierpi\'nski} \label{sect:heisenberg}

We %will 
briefly discuss %define 
the (first) Heisenberg group %although 
$\mathbb{H}$, which is
a nilpotent Lie group of step $2$ and 
%homeomorphic to $\R^3$.
in particular, a topological $3$-manifold.  Though the same results apply to all step-$2$ Carnot groups, we restrict our discussion to this case, for ease of exposition. %as it is easy to construct uniform and co-uniform domains in this setting.

When equipped with the so-called Carnot-Carath\'eodory metric $d_{CC}$ induced from its Lie algebra of vector fields, $\mathbb{H}$ becomes a highly non-Euclidean metric space.
In particular, %This is a consequence of the non-embedding 
recent
theorems of Cheeger and Kleiner \cite{cheegerkleiner} imply
that $(\mathbb{H},d_{CC})$ admits no isometric (or even bi-Lipschitz) embedding into any Hilbert space. Their proof uses the fact that $\mathbb{H}$ 
satisfies a $(1,1)$-Poincar\'e inequality
and therefore a Rademacher-type theorem for Lipschitz functions.
%Poincar\'e inequality on the Heisenberg group and the differentiability structure it implies'. 

As for specific properties, topologically we have $\mathbb{H} = \R^3$ but %it is equipped with a 
the
group law 
$$
(x,y,t) \times (u,v,w) = (x+u, y+v,t+w + \frac{1}{2}(xv-uy))
$$
%This makes $\mathbb{H}$ into a 
induces a 
Lie group structure on $\mathbb{H}$
with an associated nilpotent Lie algebra.
%
%% ADD THE BOTTOM PARAGRAPH ELSEWHERE:
%
%, and we can define the measure $\mu$ on $\mathbb{H}$ to be the restriction of the Lebesgue measure on $\mathbb{H}$, which is bi-invariant.
For simplicity, 
instead of the Carnot-Carath\'eodory distance $d_{CC}$ on $\mathbb{H}$, as discussed say in Montgomery's book \cite{montgomery},
we introduce
%we define 
the Korany\'i norm %as
$$
N(x,y,t) = \left((x^2 + y^2)^2 + t^2\right)^\frac{1}{4},
$$
which induces another
%and it determines a
%based on it we define the 
distance %$d$
$
d(p,q) = %d(q^{-1}p,0) = 
N(q^{-1}p)
$,
between points $p,q \in \mathbb{H}$, %as given below, 
that is bi-Lipschitz equivalent to %the Carnot-Carath\'eodory distance 
$d_{CC}$.
Moreover, $N(x,y,t) \leq \sqrt{\|(x,y,t)\|_2}$ if $\|(x,y,t)\|_2 \leq 1$.

It is known that the Haar measure on $\mathbb{H}$ is the usual Lebesgue measure $\lambda$ on $\mathbb{R}^3$ and that $\mathbb{H}$ is Ahlfors $4$-regular with respect to it.
Somewhat surprisingly, $(\mathbb{H},d_{CC},\lambda)$ %the space equipped with this metric 
satisfies a $(1,p)$-Poincar\'e inequality. 
%This Poincar\'e inequality was observed (albeit with a different exponent) in \cite{jerisonpoincare}. See also \cite{lanconellimorbidelli} for an easier statement and proof.
The $p=2$ case was first observed by Jerison \cite{jerisonpoincare}; for the optimal exponent $p=1$, see the proof of Lanconelli and Morbidelli \cite{lanconellimorbidelli}. %
%It is also easy to see, by focusing on balls centered at the origin, that the measure becomes $4$-Ahlfors regular. 
(For more discussion about the geometry of these spaces, as well as the general theory of Carnot groups, we refer the reader to %Bella\"iche-Risler 
\cite{bellaiche}, %Montgomery 
\cite{montgomery}, or \cite{varopoulosgroups}.)

In the spirit of the prior subsection, we now show the existence of metric sponges in the Heisenberg group,  %
%satisfies a $(1,1)$-Poincar\'e inequality, so 
%Due to the generality 
%nature of the previous paragraph we will only need 
%of the previous discussion, 
so it suffices to show the existence 
and uniform sparsity
of co-%exterior 
uniform domains in $\mathbb{H}$. %from \cite{morbidelliheis}.
%Towards the sparsity condition, 
To this end, we proceed in two steps:
\begin{enumerate}
\item {\bf Geometric preliminaries}.\ Recall that
on $\mathbb{H}$ there are natural dilations
$$
\delta_s(x,y,t) = (s^{-1}x, s^{-1}y, s^{-2}t)
$$ 
that are also Lie group automorphisms. Moreover, %left-translation by
for any $g \in \mathbb{H}$,
the left-translation
$$
L_g(x) = g \times x
$$
is an isometry in both the Lie group and the metric space senses, so
%. Finally, we define 
consider the
``conformal mappings''
$$
A_{\lambda, g} = %(x) = 
L_g \circ \delta_{\lambda}. %g\delta_{\lambda}(x)
$$
Now if $E, \Omega$ are fixed, bounded subsets of $\mathbb{H}$ with $C^{1,1}$-boundary, then a result of Morbidelli \cite{morbidelliheis} implies that
$\Omega$ and $\mathbb{H} \setminus E$ are $A$-uniform domains for some $A>0$. 
(As an example, the %unit ball in the Euclidean metric 
Euclidean unit ball
%$\Omega = B_{\rm eucl}(0,1) = E$ 
$B_{\rm eucl}(0,1)$ 
as a subset of $\mathbb{H}$
has boundary $\partial E = \partial B_{\rm eucl}(0,1)$ with this regularity.) 

Further, since $A_{\lambda,g}$ act by an isometry and a scaling map, the domains
$$
A_{\lambda,g}(\mathbb{H} \setminus E) = 
\mathbb{H} \setminus A_{\lambda,g}(E)
$$ 
remain $A$-uniform as %we vary 
$\lambda \in (0,\infty)$ and $g \in \mathbb{H}$ 
vary. 
%Note that $A_{\lambda,g}(B(0,1)) = B(g, \lambda)$, which allows for some self-similarity for our sponges.

\vspace{.05in}
\item {\bf The iterative construction}.\ Fix a sequence $\nseq = \{n_i\}_{i=1}^\infty$ in $\mathbb{N}$ such that %$\sum_{i=1}^\infty \frac{1}{n_i^4} < \infty$ 
$\nseq^{-1} \in \ell^4(\N)$
and $n_i \geq 3$ for all $i \in \mathbb{N}$, and define 
%$s_0 = 1$ and $s_k = \prod_{i=1}^k \frac{1}{n_i}$.
scales $\{s_k\}_{k=0}^\infty$ exactly as in Definition \ref{def:sparsecoll}.
We will define inductively our %set of 
obstacles
%. This will involve 
by first
choosing center points %$G_k = \{g_{k,i}\}$ 
at every scale, and then %defining the corresponding 
choosing collections of scaled and translated copies of the Euclidean unit ball with these centers
%$\mR_{k,\nseq} = \{B(g_{k,i})\}$
as the
obstacles.
(In what follows, all the metric notions will be with respect to the distance on $\mathbb{H}$ defined above.)

First, let $\Omega = \overline{B}_{\rm eucl}(0,1)$, so
${\rm diam}(\Omega) \leq 2$. Now define $G_1 = \{0\}$ and 
$$
\mR_{1,\nseq} = \{
A_{s_1,0}(B_{\rm eucl}(0,1))
\}$$ 
and let 
$
S_{1, \nseq} = \Omega \setminus B_{\rm eucl}(0,s_1)
$
be the ``pre-sponge'' at the first stage.

Assuming %the sets 
$G_k, \mR_{k,\nseq}, S_{k,\nseq}$ have already been defined at some stage $k \in \mathbb{N}$, we next define %the next-stage sets 
$G_{k+1}, \mR_{k+1,\nseq}, S_{k+1,\nseq}$ 
at the next stage
as follows. Let 
$G_{k+1}$ %= \{g\}$ 
be a %maximal 
collection of points such that each $g \in G_{k+1}$ satisfies 
\begin{align} \label{eq:heissepar}
d(g,\partial S_{k,\nseq}) \geq {s_k} \text{ and } d(g, g') \geq s_k %= n_{k+1}s_{k+1} \geq 3s_{k+1}
\end{align}
for each $g' \in G_{k+1}$. 
(Such a collection 
could %may perfectly well happen to 
be empty.)
Moreover, call $G_{k+1}$ \textit{maximal} if no other collection of points $G'$ satisfying \eqref{eq:heissepar} strictly contains $G_{k+1}$.
Putting
$$
\mR_{k+1,\nseq} = 
\{
A_{s_{k+1},g}(B_{\rm eucl}(0,1))
%B_{\rm eucl}(g, s_{k+1}) 
~|~ g \in G_{k+1}
\},
$$
%and $\nmR_{k+1,\nseq} = \bigcup_{l=1}^{k+1}\mR_{l,\nseq}$,
the $(k+1)$-stage pre-sponge is
$$
S_{k+1,\nseq} ~=~
S_{k, \nseq} \setminus \bigcup_{R \in \mR_{k+1,\nseq}} R ~=~
\Omega \setminus 
\bigcup_{l=1}^{k+1} \bigcup_{R \in \mR_{l,\nseq}} R.
%\bigcup_{R \in \nmR_{k+1,\nseq}} R.
$$
Finally, define
$$
S_{\nseq} = \bigcap_{k=1}^\infty S_{k,\nseq}.
$$
\end{enumerate}

\begin{lemma}\label{lem:sparsecollectionheis} Let $\nseq, G_k,%n_k,
\mR_{k,\nseq}, S_{\nseq}, A$ be %defined 
as above. Then, the sets
$\{\mR_{\nseq,k}\}_{k=1}^\infty$
in $\Omega$ form a uniformly $\nseq$-sparse collection of 
co-uniform 
subsets in $\Omega$.

Moreover, if each $G_{k+1}$ is chosen to be maximal, relative to $\{G_i\}_{i=1}^k$, then $\{\mR_{\nseq,k}\}_{k=1}^\infty$ is dense in $\Omega$ and $S_{\nseq}$ has empty interior.
\end{lemma}

\begin{proof}
 First, let $R_k \in \mR_{k,\nseq}$ and $R_l \in \mR_{l,\nseq}$ be arbitrary with $k \geq l$, 
so %and put 
$R_k = A_{s_k,g_k}(B_{\rm eucl}(0,1))$ and $R_l = A_{s_l,g_l}(B_{\rm eucl}(0,1))$ %$R_k = B_{\rm eucl}(g_k, s_k)$ and $R_l = B_{\rm eucl}(g_l, s_l)$
for some $g_k \in G_k$ and $g_l \in G_l$. 
%Also,

To show the separation property,
as a first case let $k > l$, so \eqref{eq:heissepar} implies that
%Based on the construction of $S_{k,\nseq}$, we have 
%By assumption, 
\begin{align} \label{eq:heissepar2}
d(g_k, R_l) \geq 
d(g_k, \partial S_{l,\nseq}) \geq 
d(g_k, \partial S_{k-1,\nseq}) \geq 
s_{k-1},
\end{align}
in which case the Triangle inequality further implies %So, 
$$
d(R_k, R_l) \geq 
d(g_k,R_l) - s_k \geq %{\rm diam}(R_k) \geq
%s_{k-1}-s_{k}{\rm diam}(B_{\rm eucl}(0,1)) \geq 
s_{k-1}-s_{k} \geq 
\frac{s_k}{2}.
$$
As for $k=l$, applying
\eqref{eq:heissepar2} with $l-1=k-1$ in place of $k$, as well as \eqref{eq:heissepar}, yields
$$
d(R_k,R_l) \geq 
d(g_k,g_l) - d(g_k,\partial R_k) - d(g_l,\partial R_l) \geq
s_{k-1} - 2s_k \geq
s_{k-1} - \frac{2s_{k-1}}{3} \geq
\frac{1}{6}s_{k-1} \diam(\Omega).
$$
Similarly if $k \geq l$ then \eqref{eq:heissepar} implies that%
$$
d(R_k, \Omega^c) \geq 
d(R_k, \partial S_{k-1,\nseq}) \geq
d(g_k, \partial S_{k-1,\nseq}) - s_k \geq
s_{k-1} - \frac{s_{k-1}}{2} =
\frac{1}{2}s_{k-1} \geq \frac{1}{6}s_{k-1} \diam(\Omega),
$$
so %. So, choosing 
$\delta = \frac{1}{6}$ 
yields %we obtain 
the desired separation. 
Moreover, $\diam(R_k) \leq 2s_k$ follows from construction, so the diameter bound follows with $L = 2$.

As in (1) before the statement of the Lemma, each $R_k$ has $C^{1,1}$-boundary, %By the discussion above 
so 
each $X \setminus R_k$ is $A$-uniform %and $\Omega$ is $A$-uniform
with $A$ independent of $k$; %This concludes the proof of the sparsity. We conclude by showing density.
the same is true of $\Omega$.
It follows that the collection $\{\mR_{\nseq,k}\}_{k=1}^\infty$ is uniformly $\nseq$-sparse.

As for density, let $x \in \Omega$ be arbitrary, %and 
let $r \in (0,\frac{1}{3}s_1)$,
%We will show that $B_{\rm eucl}(x,r)$ intersects some $R_k \in \mR_{k,\nseq}$. 
and choose $k \geq 1$ so that 
$$
s_{k+1} < 
%s_{k-1} < 
r \leq 
s_k.
$$
Now, $B_{\rm eucl}(x,s_{k+1})$ 
and hence $B_{\rm eucl}(x,r)$ 
must intersect some $R_l \in \mR_{l,\nseq}$ for some $l \leq k+2$, otherwise $G_{k+2} \cup \{x\}$ would form a larger collection of points satisfying the desired separation bounds; this, however, 
would
contradict maximality of $G_{k+2}$. %Thus, $B_{\rm eucl}(x,r)$ also intersects some $R_l \in \mR_{l,\nseq}$ for some $l \leq k+2$.
\end{proof}

 Finally, we can apply Lemma \ref{lem:sparsecollectionheis} and Theorem \ref{thm:metrisponge} to %directly 
 conclude the following %By varying the construction above with different $E$
 result.

 \begin{corollary} Let $G_k,n_k,\mR_{k,\nseq}, S_{\nseq}, \Omega,A$ be defined as above. Then $S_{\nseq}$ is a compact subset of $\mathbb{H}$ which has empty interior, is Ahlfors $4$-regular and satisfies a $(1,p)$-Poincar\'e inequality for any $p>1$.
 \end{corollary}

 %\begin{proof}
  %By Lemma \ref{lem:sparsecollectionheis} the sets $\{\mR_{\nseq,k}\}_{k=1}^\infty$ in $\Omega$ form a uniformly $\nseq$-sparse collection of uniform sets in $\Omega$ that is dense. Thus, by Theorem \ref{thm:metrisponge} we obtain the desired facts about Ahlfors regularity and Poincar\'e inequalities.
 %\end{proof}

In conclusion, we note that the above construction applies to all step-2 Carnot groups, such as higher-dimensional Heisenberg groups,
or for that matter, any Carnot group where uniform domains exist at all scales and locations.  
Moreover, replacing the left-translations $L_g$ with Euclidean translations $x \mapsto x+g$ and the anisotropic dilations $\delta_s$ with Euclidean dilations, the analogous construction still works for Euclidean spaces $\mathbb{R}^d$. In
%Euclidean spaces 
this case, 
this gives new examples of Sierpi\'nski carpets and sponges supporting Poincar\'e inequalities, where the complementary domains are self-similar copies of $E$, %as long as the complement of $E$ is a uniform domain.
with $\mathbb{R}^d \setminus E$ uniform.

\begin{corollary} \label{thm:euclideansponges} 
Let $d \in \mathbb{N}$ with $d \geq 2$, let $\Omega$ be a uniform domain in $\R^d$, and 
let $E$ be a bounded open subset of $\Omega$ that is co-uniform in $\R^d$ %and so that 
with
$0 \in E$
%and $\R^d \setminus E$ is uniform and $\partial E$ is connected 
and ${\rm diam}(E) \leq 1$. 
Given a sequence $\nseq = (n_i)_{i=1}^\infty$ in $\N$ with each $n_i \geq 3$ and with $\nseq^{-1} \in \ell^d(\N)$, if $\{G_k\}_{k=1}^\infty$ %are
is a sequence of
uniformly $\nseq$-sparse collections of points in $\Omega$, defined analogously as above, then the %there exists a 
set 
$$
S = 
\Omega \setminus 
\bigcup_{k=1}^{\infty} 
\bigcup_{g \in G_k} (s_kE + g)
%\bigcup_{R \in \mR_{k,\nseq}} R,
$$ 
%{\color{red} with each $R_k \in \mR_{k,\nseq}$ a rescaled copy of $E$,}
%such that $S$ 
is Ahlfors $d$-regular and satisfies a $(1,p)$-Poincar\'e inequality for each $p>1$.  Moreover, $S$ can be chosen to have
%It is possible to choose $S$ so that it has 
empty interior.
\end{corollary}

%Let $\Omega \subset \mathbb{H}$ be a domain with $C^{1,1}$-boundary. Such domains are known to be uniform by \cite{morbidelliheis}.

%Since we'd like to remove translates and scaled copies of the same object $A$, we will focus on the case of Carnot groups. If in a general metric measure space such objects exist uniformly at all scales and locations, then we could maybe work with them, but I don't know if such a condition is of much interest. Next, we define a Carpet process.

%\begin{definition} Let $A,\Omega$ be a fixed bounded sets in the Heisenberg group $G$, such that $\Omega$ is compact and uniform and $A$ is exterior uniform. A sponge corresponding to $A$ and $\Omega$ is a set $S \bigcup_{i=0}^\infty \Omega_i$, with associated Carpet data $\Omega_i, r_i, l_i,A, J^i, x^i_j$, such that $\Omega_0=A$, and $\Omega_{i+1} \subset \Omega_i$ is obtained from $\Omega_{i}$ in the following way. Assume $r_0<\diam(A)$ and define $r_{i+1}=r_i/l_i$, and let $B(x_j^i,r_i)$ be a maximal connection of disjoint balls contained in $\Omega_i$ for $j \in J^i$, and let $B_j^i$ be a scaled and translated copy of $B$ inside $B(x_j^i,r_i/2)$ of diameter $\diam(B_j^i) \leq r_i/l_i$. Define $\Omega_{i+1}=\Omega_i \setminus \bigcup_{j \in J^i} B_j^i$.
%\end{definition}

%\begin{remark} Maybe radii at level $i$ need not be the same.
%\end{remark}

\subsection{The problem of classifying Loewner carpets}

The 
%prior two 
previous
subsections gave a general 
%techniques for producing examples of 
construction for 
``sponges'' that satisfy Poincar\'e inequalities, including on Euclidean spaces.
%In fact, not only do we have existence, but the general condition of uniformly sparseness, which suffices to give infintely many examples. In fact, by repeating the previous sub-section for Euclidean space, we could obtain such sets with empty interior. Also, 

By varying  the choice for subsets $E$ in Corollary \ref{thm:euclideansponges}, %among sets where $\R^n \setminus E$ is uniform and $\partial E$ is connected, 
we obtain many new %examples of
%we enlarge significantly the 
possibilities %for such Poincar\'e sponges from the ones considered 
beyond those
in \cite{mackaytysonwildrick}.  
Instead of symmetry considerations, it is enough to impose regularity and sparsity conditions on $E$. For example, permissible subsets include $E$ convex, $E$ with connected and smooth boundary, or $E$ any quasi-ball --- that is, $E = f(B(0,1))$ where $f \co \R^d \to \R^d$ is any quasiconformal map.
Moreover, rescaled translates $s_kE+g$ of a single subset $E$ can be replaced by collections of 
uniformly co-uniform subsets $\{E_{gk}\}$, provided that each $E_{gk}$ contains the origin and has at most unit diameter.

Motivated by Corollary \ref{thm:euclideansponges},
%this theorem we define.
we return to the planar case and study whether such examples of carpets are generic. In this context, we can make stronger conclusions.

We begin with the following theorem from \cite{whyburncarpet}, which gives 
topological criteria
%a description 
for carpets. %, and shows that many of them must be homeomorphic to the standard Sierpi\'nski carpet $S_3$.
Recall that a point $x$ on a connected metric space $X$ is called a {\sc cut point} if $X \setminus \{x\}$ is disconnected and it is called a {\sc local cut point} if there exists $r > 0$ so that $x$ is a cut point of $B(x,r)$. Also, $S_3$ will be the usual $1/3$-Sierpi\'nski carpet, which in our notation from the introduction corresponds with $S_{\nseq}$ with $\nseq=(1/3, 1/3, \dots)$.

\begin{theorem}[Whyburn] \label{thm:whyburn}
Let $S$ be a compact, connected, and locally connected subset of $\R^2$ with empty interior. If $S$ has no cut points, then it is homeomorphic to $S_3$.
\end{theorem}

In what follows we refer to such sets $S$ as {\sc topological carpets}, which must satisfy
%As a consequence, for such sets $S$ as above, %if $S$ is any connected, locally connected, compact subset, with empty interior and without cut points, then 
%we have
$$
\R^2 \setminus S = D_0 \cup \bigcup_{i=1}^\infty D_i,
$$
where $\{D_i\}_{i=0}^\infty$ is a dense collection of open, pairwise-disjoint Jordan domains, with $D_i$ bounded for $i \geq 1$ and with $D_0$ unbounded. (To be clear, a connected open subset $D \subset \R^2$ is called a {\sc Jordan domain} if $\partial D$ coincides with a Jordan curve.)

In fact, the Loewner condition for planar carpets implies being a topological carpet.  Formulated below as Corollary \ref{cor:loewnertop}, it is an easy consequence of the following result \cite[Theorem 3.3]{heinonen1998quasiconformal}.

\begin{theorem}[Heinonen-Koskela] \label{thm:annularlyquasi} 
Let $S$ be a Ahlfors $Q$-regular metric measure space that satisfies a $(1,Q)$-Poincar\'e inequality. Then, there is a constant $C\geq 1$ such that it is $C$-quasiconvex as well as $C$-annularly quasiconvex, that is for every $z \in S$ and any $r>0$, if $x,y \in S \setminus B(z, r)$, then there exists a curve $\gamma$ 
in $X \setminus B(z,r/C)$
connecting $x$ to $y$ with $\len(\gamma) \leq Cd(x,y)$. %and $\gamma \cap B(z,r/C) = \emptyset$.
\end{theorem}

\begin{corollary} \label{cor:loewnertop}
If a compact subset $S$ of $\R^2$ is Loewner --- that is, it satisfies a $(1,2)$-Poincar\'e inequality and is Ahlfors $2$-regular --- and has empty interior  then $S$ is a topological carpet. 
\end{corollary}

\begin{proof}
It is well-known from \cite{semmescurves,ChDiff99} that %$p$-Poincar\'e carpets 
$p$-PI spaces
are quasi-convex, and are therefore both connected and locally connected.  Moreover, Loewner 
%carpets 
spaces lack local cut points, %; this follows from \cite[Theorem 3.3]{heinonen1998quasiconformal}, which we restate as follows. 
by Theorem \ref{thm:annularlyquasi}.
Thus the conditions of Theorem \ref{thm:whyburn} are met, and we know that $S$ is a topological carpet.
\end{proof}

This motivates the following definition.

\begin{definition}\label{def:poincarecarpets} A compact subset $S \subset \R^n$ %($S \subset \R^2$) 
is called a {\sc $p$-Poincar\'e sponge} %(or {\sc $p$-Poincar\'e carpet}), 
if it has empty interior, is Ahlfors $n$-regular, %($2$-Ahlfors regular) 
and satisfies a $(1,p)$-Poincar\'e inequality. 
If $n=2$ then $S$ is also called a {\sc $p$-Poincar\'e carpet}.

In particular, if $n \geq 3$ and $p \leq n$, %If $p\leq n$ ($p<2$), 
then $S$ is  called a {\sc Loewner sponge}. %(or {\sc Loewner carpet}).
Also, if instead $p 
\leq 
n = 2$ then $S$ is called a {\sc Loewner carpet}.

\end{definition}

It is %therefore 
now
natural to %ask:
reformulate the Planar Loewner problem (Question \ref{ques:planarLoewner}):

\begin{question} \label{ques:Loewnercarpets}
Can one classify 
%$p$-Poincar\'e carpets, or say Loewner carpets, 
Loewner carpets, or even $p$-Poincar\'e carpets,
in terms of the construction from Corollary \ref{thm:euclideansponges}%
?
\end{question}

There are few techniques available to treat the case of sponges in dimensions $d \geq 3$, 
but for $d=2$ %there is more hope, due to 
techniques such as uniformization (see e.g. \cite{bonkuniform})
provide more possibilities for carpets.

In this subsection we give a partial answer to 
%the classification problem. 
Question \ref{ques:Loewnercarpets}.
In particular, we give sufficient conditions for a 
topological carpet to be a $p$-Poincar\'e carpet, or even Loewner.  
In fact, %We also show that 
two of these conditions %we use 
are also necessary. %Let $S$ now be a $p$-Poincar\'e carpet. 

To formulate our result, we proceed with a well-known characterization of quasi-disks (i.e.\ quasi-balls in dimension $d=2$) from the literature \cite{beurlingahlfors}, \cite{tukiavaisala}.  This first requires a few geometric definitions.  A Jordan curve $\gamma \co S^1 \to \R^2$ is of {\sc $C$-bounded turning}, for some $C \geq 1$, if for every $s,t \in S^1$ it holds that
\begin{equation} \label{eq:boundturn}
\min\{\diam(\gamma(J_1)), \diam(\gamma(J_2))\} ~\leq~ Cd(\gamma(s), \gamma(t)),
\end{equation}
where $J_1, J_2$ are the two 
open arcs in $S^1$ %arcs of $S^1$ %defined by $s$ and $t$. 
that satisfy $J_1 \cup J_2 = S^1 \setminus \{s,t\}$.  

A Jordan curve $\gamma \co S^1 \to \R^2$ is called a {\sc $\eta$-quasicircle}, if there exists $\gamma' \co S^1 \to \R^2$ with the same image as $\gamma$, and which is $\eta$-quasisymmetric, as given in Item (4) of Remark \ref{rmk:egunif}. A {\sc quasidisk} is a domain of the form $D=f(B(0,1))$, where $f:\R^2 \to \R^2$ is quasisymmetric. 

\begin{theorem}[Beurling-Ahlfors]\label{thm:beurlingahlfors}
A bounded Jordan domain $D$ is a quasidisk if and only if $\partial D$ is a quasicircle.
\end{theorem}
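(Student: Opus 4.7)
The plan is to treat this as a classical result in planar quasiconformal geometry and sketch the standard two-direction argument, with the subtle direction resting on an extension theorem.

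For the forward direction, suppose $D$ is a quasidisk, which by definition means there is a $K$-quasiconformal homeomorphism $f \colon \R^2 \to \R^2$ with $f(\mathbb{D}) = D$, where $\mathbb{D} = B(0,1)$. In dimension two, any $K$-quasiconformal map of $\R^2$ is $\eta$-quasisymmetric with $\eta$ depending only on $K$ (this is the standard Mori-type distortion theorem). Restricting $f$ to $S^1 = \partial \mathbb{D}$ gives an $\eta$-quasisymmetric parametrization $f|_{S^1} \colon S^1 \to \partial D$, which by the definition recalled just before the statement exhibits $\partial D$ as an $\eta$-quasicircle.

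For the converse, suppose $\partial D$ is an $\eta$-quasicircle, so there exists an $\eta$-quasisymmetric $\varphi \colon S^1 \to \partial D$. The plan is to extend $\varphi$ to a quasiconformal homeomorphism $F \colon \overline{\mathbb{D}} \to \overline{D}$, and then further to a global quasiconformal self-map of $\R^2$ by reflecting across $\partial D$ using a quasiconformal reflection (available precisely because $\partial D$ is a quasicircle). The extension step is the key ingredient; one can either use the original Beurling-Ahlfors construction by averaging the boundary values over dyadic intervals after passing to the upper half-plane via a Möbius transformation, or invoke the conformally natural Douady-Earle extension. Either way, one obtains a quasiconformal map of $\R^2$ onto itself sending $\mathbb{D}$ to $D$, which is precisely what it means for $D$ to be a quasidisk.

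The main obstacle is producing the quasiconformal extension with quantitative control, since verifying distortion bounds on the extension requires delicate interval-averaging estimates (in the Beurling-Ahlfors approach) or careful analysis of barycentric extensions (in the Douady-Earle approach). A secondary subtlety is ensuring the constants are quantitative in both directions: the quasisymmetry modulus $\eta$ of $\partial D$ should depend only on the quasiconformality constant $K$ of the defining map for $D$, and vice versa, so that the equivalence is also a quantitative one.
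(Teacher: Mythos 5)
First, note that the paper does not prove this statement at all: it is quoted as a classical result and cited to the literature (Beurling--Ahlfors, with Tukia--V\"ais\"al\"a supplying the bounded-turning characterization), so there is no in-paper argument to compare against; your proposal has to stand on its own as a proof sketch.

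Your forward direction is fine: a global $K$-quasiconformal map of $\R^2$ is $\eta$-quasisymmetric with $\eta=\eta(K)$, and restricting to $S^1$ gives exactly the quasisymmetric parametrization of $\partial D$ required by the paper's definition of quasicircle. The converse, however, has a genuine gap, in two places. First, the Beurling--Ahlfors and Douady--Earle extensions apply to self-homeomorphisms of $\R$ or of $S^1$, not to an embedding $\varphi\co S^1 \to \partial D \subset \R^2$; you cannot feed $\varphi$ directly into either construction to get $F\co \overline{\mathbb{D}} \to \overline{D}$. The natural way to reduce to a circle self-map is to precompose with the boundary values of a Riemann map $R\co \mathbb{D}\to D$ and extend $R^{-1}\circ\varphi$, but to know that $R^{-1}|_{\partial D}$ (hence the composition) is quasisymmetric you already need $D$ to be a quasidisk, which is what you are proving --- the argument is circular as stated. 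Second, invoking a quasiconformal reflection across $\partial D$ ``available precisely because $\partial D$ is a quasicircle'' begs the question: with the paper's definition (quasicircle $=$ quasisymmetrically parametrized Jordan curve), the existence of a quasiconformal reflection in $\partial D$ is essentially equivalent to the conclusion that $D$ is a quasidisk, and its standard proof is the very content you are omitting.

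The missing content is the classical welding argument (or, equivalently, Ahlfors' construction of a reflection from the three-point condition): from the quasisymmetric parametrization deduce the bounded turning condition \eqref{eq:boundturn}; uniformize the interior and the exterior of $\partial D$ by conformal maps $R$ and $R^*$ of the half-planes; use extremal length/modulus estimates together with bounded turning to show that the welding homeomorphism $(R^*)^{-1}\circ R$ of $\R$ is quasisymmetric; extend that self-map of $\R$ by Beurling--Ahlfors to a quasiconformal self-map of the lower half-plane; and glue it with $R$ across $\R$ to obtain a global quasiconformal map of $\R^2$ carrying a half-plane (hence, after a M\"obius change, $\mathbb{D}$) onto $D$. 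The quasisymmetry of the welding map is where the geometry of $\partial D$ actually enters, and it is the step your sketch skips; without it, neither the extension nor the reflection you appeal to is available.
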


\begin{theorem}[Tukia-V\"ais\"al\"a%, \cite{tukiavaisala}
] %Further, 
A Jordan curve $\gamma$ is a quasicircle if and only if it of bounded turning.
\end{theorem}

Now recall the notion of relative distance from 
item (2) of Theorem \ref{thm:planar}:\ 
%Remark \ref{rmk:reldist}: call 
a collection of sets $\mathcal{R}$ is called {\sc uniformly relatively $s$-separated} if $\Delta(E,F) \geq s$ for every disjoint pair $E,F \in \mathcal{R}$.

%{\color{red}
%Further, for a Jordan curve $\gamma \subset \R$
%} \ldots?

\begin{theorem}\label{thm:necessary} If $S$ is a %planar
Loewner carpet, %or a planar $p$-Poincar\'e carpet without local cut points, 
then there are countably many 
%{\color{red}
%uniform%
%}, and 
pairwise disjoint, Jordan domains $D_i, \Omega$ such that 
$$
S = \Omega \setminus \bigcup_{i=1}^\infty D_i.
$$
%Further, if $S$ is Loewner, then the 
and where each
$\partial D_i$ and $\partial \Omega$ form a uniformly relatively $s$-separated collection of uniformly $\eta$-quasicircles for some $s>0$ and some distortion function $\eta: [0,\infty) \to [0,\infty)$.
\end{theorem}

%Towards the proof, 

\begin{proof}%[Proof of Theorem \ref{thm:necessary}] 
As $S$ is closed, we decompose the complement into open components
$$
\R^2 \setminus S = \bigcup_{i=0}^\infty D_i
$$ 
where %there can be 
at most one component, say $D_0$, is unbounded. Define $\Omega = \R^2 \setminus D_0$. Since $S$ is Loewner, by \cite[Theorem 3.3]{heinonen1998quasiconformal}, it lacks %cut points, each $D_i$ is a Jordan domain. For Loewner carpets this follows from \cite[Theorem 3.3]{heinonen1998quasiconformal}. Again by the lack of 
local cut points. Further, by Theorem \ref{thm:annularlyquasi} we obtain that $S$ is $C$-quasiconvex and $C$-annularly quasiconvex, 
with $C\geq 1$.  It then follows from Theorems \ref{thm:whyburn} and \ref{thm:annularlyquasi} that the $D_i$ are Jordan domains with pairwise disjoint closures.

Put %Define 
$C_b = 2C^2+1$. 
%Next, we will 
We now 
show that each $\partial D_i$ is %are uniformly 
of $C_b-$bounded turning, %for some uniform constant $C_b$, 
for all $i \in \N$. %. We show this for $i \geq 1$, and 
(For $i = 0$ the argument is similar and we omit it here.)

Let $\gamma \co S^1 \to \partial D_i$ be a parametrization of the boundary as a Jordan curve. Let $s,t \in S^1$ be arbitrary and distinct and let $J_1, J_2$ be the arcs in $S^1$ defined by these points.  Now, if $\gamma(J_1)$ or $\gamma(J_2)$ is contained in the ball $B(\gamma(s), C_bR_{st})$, 
where
$$
R_{s,t} = |\gamma(s) - \gamma(t)|
$$
%$\gamma(J_j) \subset B\big(\gamma(s), (3C+1)d(\gamma(s), \gamma(t))\big) \cap L_i$ 
%for either $j=1,2$, 
then %we obtain that $\partial D_i$ is $C_b$-bounded turning, as desired.
\eqref{eq:boundturn} clearly follows.  So assume instead that
$$
\gamma(J_j) \nsubseteq 
%B(\gamma(s), (C+1)d(\gamma(s), \gamma(t))) 
B(\gamma(s),C_b R_{s,t})
$$
for both $j=1,2$, so there are points 
$
x_j \in \gamma(J_j) \setminus B(\gamma(s), 2C^2 
R_{s,t})
%d(\gamma(s'), \gamma(t')))
$
for both $j=1,2$.

Since $S$ is $C$-quasiconvex, there is a rectifiable curve $\sigma_S$ joining $\gamma(s)$ and $\gamma(t)$ of length at most $CR_{s,t}$ within $S$. It is well-known, say by
C. B. Moore's work
\cite[Theorem 1]{moore}, that there exists a simple subcurve $\sigma_L'$ in $\sigma_S$ that also joins $\gamma(s)$ and $\gamma(t)$. Also, since $D_i$ is a Jordan domain, there is a simple curve $\sigma_D$ joining $\gamma(s)$ and $\gamma(t)$ while intersecting $\partial D_i$ only at those two points. Form the Jordan curve $\sigma$ by concatenating the two simple arcs $\sigma_L'$ and $\sigma_D$. Since $\sigma \subset D_i \cup B(\gamma(s), CR_{st})$, we know that $x_1,x_2\not\in \sigma$.

%
%So if $\ell$ is the line segment joining $\gamma(s)$ and $\gamma(t)$, %By the previous claim, 
%then by our assumption
%there must exist points $s',t' \in J_j$ and a sub-segment $\ell'$ of $\ell$
%connecting $\gamma(s')$ and $\gamma(t')$ and such that $\ell'$ does not intersect $\partial D_i$ except at $\gamma(s')$ or $\gamma(t')$.
%If we consider the complementary arcs $J_1', J_2' \subset S^1$ for $s',t'$, then for both $j=1,2$ we have 
%$$
%\gamma(J_j') \nsubseteq B\big(\gamma(s'), 
%2CR_{s',t'}
%%2Cd(\gamma(s'), \gamma(t'))
%\big) \cap L_i.
%$$

%Since $L_i$ is also a Jordan domain, 
The curve $\sigma$ divides $\R^2$ into two components $U,V$ so that $\partial U=\sigma = \partial V$. Since $D_i$ is an open set containing a point of $\partial U$ and $\partial V$, we must have that $D_i$ intersects both $U$ and $V$. However, since $D_i$ is Jordan, every point in $D_i \setminus \sigma$ can be connected either to $x_1$ or $x_2$ while avoiding $\sigma$. Now, if $x_1,x_2\in U$, then every point of $D_i \setminus \sigma$ would belong to $U$, which is not possible. Similarly for $V$, and thus $x_i$ must lie in separate components of $\R^2 \setminus \sigma$, i.e. one belongs to $U$ and another to $V$. In particular $\sigma$ separates the points $x_1,x_2$.
%boundaries
%$J_j \cup \sigma'$.%

However, $x_j \in S$, and by annular quasiconvexity there exists a curve %$\sigma$ 
connecting $x_1$ and $x_2$, within $S$ and contained in $\R^2 \setminus B(\gamma(s), 2CR_{s,t})$ and thus avoiding $\sigma$. Thus $x_1$ and $x_2$ belong to the same component of %$L_i \setminus \ell'$, 
$\R^2 \setminus \sigma$, which is a contradiction. %but this is not possible by the above.

We now show uniform $s$-separation for $s = \frac{1}{2^4 C^2+2}$; 
i.e.\
for all $D_i$,$D_j$ with $D_i \neq D_j$ 
%we claim
that
\begin{equation} \label{eq:disksepar}
d(D_i,D_j) \geq s \min\{\diam(D_i),\diam(D_j)\}.
\end{equation}
Supposing otherwise, there would exist a pair,
say $D_i,D_j$, 
where %this would 
\eqref{eq:disksepar}
fails. Choose a pair of points $a \in \partial D_i, b \in \partial D_j$ with $|a-b|=d(D_i,D_j)$. Next, let $\ell$ be the line segment joining $a$ and $b$, which is contained in $\R^2 \setminus (D_i \cup D_j)$. Choose two points $x_1 \in D_i,x_2 \in D_j$ with
$$
d(x_1,a) \geq \diam(D_i)/2 \geq 8C^2 d(D_i,D_j)
\text{ and }
d(x_2,b) \geq 8C^2 d(D_i,D_j).
$$
%Thus we have $d(x_i,a)\geq 8C^2 d(D_i,D_j).$ 
The points $x_1,a$ divide $\partial D_i$ into two arcs $J_1,J_2$. 
Next, since $J_i$ are connected, we can find points $s_i \in J_i$ with $d(s_i,a) = 2Cd(D_i,D_j)$. Thus $d(s_1,s_2) \leq 4Cd(D_i,D_j)$ By the annular quasiconvexity condition, and combined with %Moore 
\cite[Theorem 1]{moore}, we can find a curve $\sigma_L$ connecting $s_1$ to $s_2$ within $B(a, 4C^2 d(D_i,D_j)) \setminus B(a, 2d(D_j,D_j))$. Again find a curve $\sigma_D$ within $D_i$ connecting $s_i$, and form the Jordan curve $\sigma$ by concatenation of $\sigma_L$ and $\sigma_D$. As above, this curve will separate $x_1$ and $a$. However, since $\sigma$ can not intersect $\ell$, and $x_2$ can be connected to $\ell$ while lying strictly within $D_j$, we see that $x_2$ lies in the same component defined by $\sigma$ as $a$. Hence, $x_2$ lies in a different component of $\R^2 \setminus \sigma$ than $x_1$. But this contradicts the annular quasiconvexity condition, just as before. 
\end{proof}

The assumptions of uniform separation and uniform quasidisks have appeared before in \cite[Theorem 1.1]{bonkuniform}. 

\begin{theorem}[Bonk] 
If $S= \Omega \setminus \bigcup_{i\in I} D_i$, where $D_i$ and $\Omega$, for $i \in I$ are an at most countable collection of uniformly $\eta$-quasidisks, with $\{\partial \Omega\} \cup \{\partial D_i\}_i$ uniformly relatively separated, 
%\sout{\color{red} and $\Omega, D_i$ are uniform quasidisks, }
then there exists a quasisymmetry $f \co \R^2 \to \R^2$, such that $$
f(S) = B(0,1) \setminus \bigcup_{i \in I} B(x_i, r_i).
$$
\end{theorem}

%\begin{note}[Jasun, 2019-06-24]
%{\edit 
%Does `uniform' above refer to a uniform \\
%distortion constant for the quasidisks?
%}
%\end{note}

In other words, every such set $S$ is quasisymmetric to a similar set with circle boundaries. One can also find quasisymmetric maps with images with square boundaries, or any other self-similar shapes. The proof follows from identical arguments to \cite[Theorem 1.6]{bonkuniform}.

As a corollary we obtain a result, which is known to many specialists.

\begin{corollary}\label{cor:loewnerunif} 
If $S$ is a %planar 
Loewner carpet, then there exist 
quasisymmetries $f: S \to S'$ and $g \co S \to S''$ so that
$$
S' = B(0,1) \setminus \bigcup_{i \in I} B(x_i, r_i)
\text{ and }
S'' = [0,1]^2 \setminus \bigcup_{i \in I} Q_i
$$
where $\{\bar{B}(x_i, r_i)\}_{i\in I}$ is a pairwise disjoint collection of %subsets 
disks
in $B(0,1)$ and %. Also, there exists a quasisymmetry , where $Q_i \subset [0,1]^2$ are 
$\{Q_i\}_{i \in I}$ is a collection of open
squares 
in $[0,1]^2$
with pairwise disjoint closures. %compactly contained in $(0,1)^2$.
\end{corollary}

This reduces the classification of Loewner carpets to the problem of classifying square carpets. As of now, though, no such classification exists, even with such explicit boundaries. 
 However, we give instead a sufficient condition in terms of an assumption on density. Let 
$\mathcal{R} \defeq \{D_i\}_{i \in I}$ 
be a countable collection of connected open sets in %the plane 
$\R^2$, 
consider the indices of those sets near a fixed ball, denoted as
\begin{equation} \label{eq:closeindices}
I(x,r) \defeq \{ i \in I : D_i \cap B(x,r) \neq \emptyset \},
\end{equation}
and for $N \in \N$, 
consider a variant of the ``$N$-fold density function'' from \eqref{eq:densityfunction}, given as
\begin{align} %\label{eq:densityfunction}
s_N(x,r) \defeq
\inf\Big\{
%\inf_{|J| \leq N, J \subset I}
\sum_{i \in I(x,r) \setminus J}
%\sum_{ i \in I \setminus J, D_i \cap B(x,r) \neq \emptyset}
\frac{\lambda(D_i)}{r^2} :
J \subset I, |J| \leq N
\Big\}.
\end{align}
Note that if $D_i$ are uniform quasidisks, then $\diam(D_i)^2 \sim \lambda(D_i)$.

The following is a more quantitative version of Theorem \ref{thm:planar}, which can be considered its corollary.

\begin{theorem} \label{thm:densitycarp}
Let $\Omega, D_i$, for $i \in I$, be a countable collection of uniform $\eta$-quasidisks such that 
$D_i \subset \Omega$ and that 
$\{\partial \Omega\} \cup \{\partial D_i\}_i$ are uniformly relatively $s$-separated. Fix $N \in \N$. For every $p \in (1,\infty)$ 
there exists $\epsilon_{p,N} > 0$, depending on $s$, $\eta$,
such that if
$$
\limsup_{r\to 0} \sup_{x \in X} s_N(x,r) < \epsilon_{p,N},
$$
then %$S$ 
$S= 
\Omega \setminus \bigcup_{i\in I} D_i$
is a $p$-Poincar\'e carpet.
In particular, if there exists $N \in \N$ such that
$$
\lim_{r\to 0} \sup_{x \in X} s_N(x,r) = 0,
$$
then $S$ is a Loewner carpet.
\end{theorem}

We remark, that for self-similar Sierpi\'nski carpets $S_{\nseq}$ %we have 
it %which 
follows from the proof in Theorem \ref{thm:ndimSiercarpet} that

$$
\lim_{r\to 0} \sup_{x \in X} s_1(x,r) = 0.
$$

\begin{proof}
 It is sufficient to show the first claim. %, since, if the limit is vanishing, then certainly $\limsup$ will be less than $\epsilon_{2,N}$.

 Firstly, as a consequence of %Beurling-Ahlfors
 Theorem \ref{thm:beurlingahlfors}, the set $\R^2 \setminus D_i$ is a quasisymmetric image of $\R^2 \setminus B(0,1)$. Then, since uniformity is preserved under quasisymmetries \cite{martiosarvas}, we see that the $D_i$ are co-uniform domains in the sense of Definition \ref{def:exteriorunif} with 
 the same 
 uniform constant.  Similarly, the $D_i$ are all uniform domains and there is a constant $C_d$, independent of $i$, so that $\diam(D_i)^2 \leq C_d \lambda(D_i)$. Similarly $\Omega$ is a uniform domain. Let $D \leq 9$ 
be the metric doubling constant of $\R^2$.

Now fix $N$ and define for any subset $J \subset I$ the set
$$
\Omega_J \defeq \Omega \setminus \bigcup_{i \in J}D_i.
$$ 
By Corollary \ref{cor:unifmultiple}, each $\Omega_J$, with $|J| \leq D^8 N$, is an $A$-uniform domain with constant $A$ depending only on $N,s,\eta$ 
and in particular, independent of $J$,
so by Lemma \ref{lem:unifarreg} % 
%\sout{Thus, from the corkscrew condition in \ref{lem:corkscrew} we see that}
it is also Ahlfors $2$-regular with constant $C_\lambda$ depending only on $N, s, \eta$.

With %the constants 
$A$, $C_\lambda$, and $C_d$ now fixed, let $\epsilon %= \epsilon(p,N)
>0$ be the constant from Corollary \ref{cor:almostunifcon} such that %if $S \subset \R^2$ is
any 
$(\epsilon,A)$-almost uniform 
subset of $\R^2$ 
%then it 
necessarily
satisfies a $(1,p)$-Poincar\'e inequality. Define
\begin{equation}\label{eq:epsilonpNchoice}
\epsilon_{p,N} = 2^{-3}A^{-2}C_\lambda^{-1}C_d^{-1}D^{-8}\epsilon.
\end{equation}
Now, by assumption there exists $r_0 > 0$ such that
$$
\sup_{x \in X} s_N(x,2Ar) < \epsilon_{p,N}
$$
for all $r \in (0,r_0)$. Fix such an $r\in (0,r_0)$.

To 
construct the filling, %. Find 
take an $Ar$-net\footnote{A set $\mathcal{N}$ is a {\sc$\epsilon$-net}, if it is maximal subject to the condition that for each $x_i,x_j\in\mathcal{N}$ distinct it holds that $d(x_i,x_j) \geq \epsilon$. } $\mathcal{N} = \{x_i\}$ of $S$ and %use it to 
define a covering of $S$ by balls $\mathcal{B}=\{B(x_i,2Ar)\}$. By 
the $D$-metric doubling condition, %each ball 
for $x \in S$, each
$B(x,2^4 Ar)$ %for $x \in S$, 
intersects at most $D^8$ many balls in $\mathcal{B}$. Let $\mathcal{N}_{x,r}$ be the collection of the indices $i$ so that $B(x,2^4 Ar) \cap B(x_i,2Ar)$ is not empty. In other words, we have $|\mathcal{N}_{x,r}|\leq D^8$ for any $x\in S$. 

Now for each $B(x_i,2Ar) \in \mathcal{B}$,
let $I(x_i,2Ar)$ be the set of indices as in \eqref{eq:closeindices}, and 
choose a subset $J_i \subset I(x_i,2Ar)$ with $|J_i|\leq N$ so that 
$$
\sum_{j \in I(x_i,2Ar) \setminus J_i} \frac{\lambda(D_j)}{(2Ar)^2} = s_N(x_i,2Ar)<\epsilon_{p,N}.
$$
By choice of $\epsilon_{p,N}$, we have that if $j\in  I(x_i,2Ar) \setminus J_i$, then $\diam(D_j)\leq r$, as otherwise 
$$
\lambda(D_j) \geq 
C_d^{-1}\diam(D_j)^2 \geq 
(2^3A^2\epsilon_{p,N}) r^2
$$
would be a contradiction. In particular, if $D_j$ is such that $D_j \cap B(x_i,2Ar) \neq \emptyset$ and $\diam(D_j) \geq r$, then $j\in J_i$.
%Define now 

Now let 
$\mathcal{J} = \bigcup_{x_i \in \mathcal{N}} J_i$, and define $\Omega_r \defeq \Omega_{\mathcal{J}} = \Omega \setminus \bigcup_{i \in \mathcal{J}} D_i$. We will show that $\Omega_r$ is our desired filling.

 We first show the local uniformity at scale $4r$. Take $x,y \in \Omega_r$ with $d(x,y) \leq 4r$.  Define 
$$
J = \bigcup_{i\in \mathcal{N}_{x,r}} J_i.
$$
Since $|\mathcal{N}_{x,r}|\leq D^8$, we have $|J|\leq D^8 N$. Consider now some $j\in \mathcal{J}$ with $D_j \cap B(x,8Ar) \neq \emptyset$.  If $\diam(D_j) \geq r$, then we have an $i$ so that $B(x_i,2Ar) \cap D_j \cap B(x,2^4Ar) \neq \emptyset$ and we must have $i\in J_i \subset J$ by the choice of $\epsilon_{p,N}$ and the previous two paragraphs. If instead $\diam(D_j) \leq r$ we can take any $B(x_i,2Ar)$ which intersects $D_j$ and thus $B(x,2^4Ar)$ with $j \in J_i\subset J$. Either way, any $j\in \mathcal{J}$ such that $D_j \cap B(x,8Ar) \neq  \emptyset$ will satisfy $j\in J$. %the set 
It follows that, for each $\rho \in (0,8Ar]$,
$$
\Omega_r \cap B(x,\rho)  = \Omega_\mathcal{J} \cap B(x,\rho) = 
\Omega_J \cap B(x,\rho).
$$
%We remark, that since $J\subset \mathcal{J}$, usually $\Omega_{\mathcal{J}}\neq \Omega_J$. However, we just showed that the intersections with $B(x,8AR)$ match.

 Since $\Omega_J$ is $A$-uniform, we have that $x,y$ can be connected by an $A$-uniform curve within $\Omega_J$, which will also automatically be an $A$-uniform curve within $\Omega_r$. Similarly, we obtain that $\Omega_r$ is Ahlfors $2$-regular with constant $C_\lambda$ up to scale $2r$.

Next, we show the desired density bound. We have that 
\begin{equation}\label{eq:inclusionOmega}
\Omega_r \setminus S \cap B(x,r) = \Omega_J \setminus S \cap B(x,r) \subset \bigcup_{i\in\mathcal{N}_{x,r}} \bigcup_{j \in I(x_i,2Ar) \setminus J_i} D_j.
\end{equation} 

Then the choice in Equation \eqref{eq:epsilonpNchoice}, Inclusion \eqref{eq:inclusionOmega} and Ahlfors regularity of $\Omega_J$  lead to
\begin{eqnarray*}
\frac{\lambda(\Omega_{r} \setminus S \cap B(x,r))}{\lambda(B(x,r) \cap \Omega_{r})} &= & \frac{\lambda(\Omega_{J} \setminus S \cap B(x,r))}{\lambda(B(x,r) \cap \Omega_{J})} \leq
\frac{\displaystyle \sum_{ i\in\mathcal{N}_{x,r}} \sum_{j\in I(x_i,2Ar) \setminus J_i} \lambda(D_i)}{ \frac{1}{C_\lambda} r^2} \\
&=&  
4A^2C_\lambda \sum_{ i\in\mathcal{N}_{x,r}} s_N(x_i,2Ar) %\frac{\displaystyle \sum_{ i\in\mathcal{N}_{x,r}} s_N(x_i,2Ar)}{ \frac{1}{C_\lambda} r^2}
~<~
8A^2D^8 C_\lambda\epsilon_{p,N} ~<~
\epsilon,
\end{eqnarray*}
which is the desired density condition; the Poincar\'e inequality follows.
\end{proof}

%\begin{note}
%{\edit In the above statement, uniformly $(1,p)$-collarable sets correspond to $\delta=1$.  Do we really want to fix the thickness of the neighborhood for the sets $R %\in \mathcal{R}$?}
%\end{note}

%\begin{proof}[Idea of proof] By doubling the there are only finitely many $R \in \mathcal{R}$ which intersect $B(x,r)$ and have comparable or larger diameter. However, if we add back all the remaining obstacles, then we get a PI-space at scale $r$ (by mofidying an argument from Bonk on Loewner carpets where one removes finitely many quasicircles). Then we can apply our characterization.
%\end{proof}

%-define density function and show that given density condition planar carpets. Show sufficiency and necessity for Loewner ones. Give bad example.

\section{General Poincar\'e results} \label{sect:classification}

We %first need 
begin with
some basic definitions.
In what follows, $X=(X,d)$ always refers to a metric space.

\begin{definition} A Lipschitz map $\gamma \co K \to X$ from a compact %set $K$ 
subset $K$ of $\R$
is called a {\sc curve fragment}
in $X$.
The domain $K$ %can also be 
is also
denoted by $\mathrm{Dom}(\gamma)$.
\end{definition}
%Its length 
Length for curve fragments 
is defined analogously as for curves,
that is
%\marginpar{\tt\tiny
%maybe move curve \\
%fragment discussion \\
%to \S5?
%}
$$
\len(\gamma) \defeq 
{%\color{red}
\sup_{n \in \N}%
}
\sup_{t_1, \dots t_n \in K} \sum_{i=1}^{n-1} d(\gamma(t_i), \gamma(t_{i+1})),
$$
where we further assume $t_i \leq t_j$ for $i \leq j$. %Further, we define
Furthermore, the set
$$
\undf(\gamma) = \big(\min(K), \max(K)\big) \setminus K,
$$
%which 
is always a countable union of disjoint open intervals, called {\sc gaps}, as follows:
\begin{equation}\label{eq:gaps}
\undf(\gamma) = \bigcup_i (a_i, b_i).
\end{equation}
%Finally, based on 
From
this, we define the {\sc total gap size} as
$$
\gap(\gamma) \defeq \sum_{i} d(\gamma(a_i),\gamma(b_i)).
$$

%For a curve fragment $\gamma$ we can canonically define path integrals
The {\sc path integral} of a Lipschitz function $f: X \to \R$ over a curve fragment $\gamma$ is canonically defined as
$$
\int_\gamma f ~ds = \int_K f(\gamma(t)) d_\gamma(t) ~dt,
$$
where $d_\gamma(t)$ is the metric derivative of $\gamma$, i.e.
$$d_\gamma (t) \defeq \lim_{h \to 0} \frac{d(\gamma(t), \gamma(t+h))}{h},$$
which exists for almost every $t \in K$.  This coincides with the definition of Ambrosio \cite{ambrosio2008gradient} for curves, when first
embedding the metric space $X$ into a Banach space, such as $L^\infty$, and filling in the gaps of $\gamma$ with line segments to construct a curve. This enlarged curve has a 
well-defined
metric derivative and integral, and the ones for  curve fragments are obtained by restriction. % of a curve which lies in and embedding the space into $L^\infty$. 
For a similar discussion see \cite{sylvester:poincare,bate2015geometry}.

We will employ the proof of the characterization 
of (global) Poincar\'e inequalities 
from \cite[Lemma 5.1]{heinonen1998quasiconformal},
%To clarify the dependence on the scale parameter, we recall part of the proof.
in order to prove new characterizations.

%\sout{In order to simplify dependence on various scale parameters we also use the following (roughly) equivalent definition.}

\begin{definition} \label{def:PIptwise}
Let $1\leq p < \infty$. A proper metric measure space $(X,d,\mu)$
is said to satisfy a
{\sc pointwise $(1,p)$-Poincar\'e inequality}
%\marginpar{\tt\tiny
%is {\sc pointwise} \\
%a standard \\
%terminology?
%}
at scale $r_0>0$ with constant $C \geq 1$, if for all locally Lipschitz functions $f:\ X \to \mathbb{R}$ and all %distinct
$x,y \in X$ with $r := d(x,y) \in(0,r_0)$ we have %for $r=d(x,y)$ that
\begin{equation}\label{eq:hajlasz}
|f(x) - f(y)|\leq
C r  \left(
M_{Cr} \Lip [f]^p (x)^{\frac{1}{p}}+
M_{Cr} \Lip [f]^p (y)^{\frac{1}{p}}
\right).
\end{equation}
\end{definition}

By \cite[Lemma 5.15]{heinonen1998quasiconformal} this is equivalent to a Poincar\'e inequality. The proof in \cite{heinonen1998quasiconformal} covers  global 
Poincar\'e inequalities, but the same argument applies to the local version as well. 
For completeness, we state the result and show
the modifications,
which
only involve tracking the scales of the balls/pairs of points used. %For a more precise statement see Theorem \ref{thm:classificationAp}.

\begin{theorem}\label{thm:classificationAp} 
Let $D \geq 1$.  For a proper space $X$, the following conditions are equivalent:
\begin{enumerate}
\item  $X$ is $(D,r_0)$-doubling and satisfies a $(1,p)$-Poincar\'e inequality with constant $C_1 \geq 1$ at some scale $r_0>0$.
\item   $X$  is $(D,r_2)$-doubling and satisfies a
$(1,p)$-pointwise Poincar\'e inequality with constant $C_2\geq 1$ at scale $r_2>0$.
\end{enumerate}
Here, the constants in Items (1) and (2) depend quantitatively on one another, with $r_2 = r_0/2$ when going from $(1) \Longrightarrow (2)$ and $r_0 = r_2/(2C_2)$ when going $(2) \Longrightarrow (1)$. Also, in either direction,
$$
\frac{1}{C} ~\leq~ \frac{C_1}{C_2} ~\leq~ C
$$
for some universal constant $C = C(D,p)$.
\end{theorem}

\begin{proof} Assume throughout that $f$ is an arbitrary Lipschitz function. 

We first prove $(1) \Rightarrow (2)$. Choose $r_2=r_0/2$ and %assume that 
let
$x,y \in X$ satisfy $r:= d(x,y)<r_2$.  Consider balls $B_i = B(x,2^{1+i}r)$ for $i \leq 0$ and $B_i = B(y,2^{1-i}r)$ for $i > 0$, %.
 all of which
%All of these balls
have radius less than $r_0$ and thus the  local Poincar\'e inequality can be applied to them. Then for $i \leq -1$, we obtain $B_{i+1}=2B_{i}$, 
as well as
\begin{eqnarray*}
|f_{B_i} - f_{B_{i+1}}| & \leq &
\avint_{B_{i}}  |f - f_{B_{i+1}}| ~d\mu \\
                        &\leq& % & \leq &
D^2 \avint_{B_{i+1}}  |f - f_{B_{i+1}}| ~d\mu %\\
                        ~\leq~ %& \leq &
D^2 C_12^{2+i}r \left(\avint_{C_1 B_{i+1}} \Lip [f]^p ~d\mu \right)^{\frac{1}{p}}
\end{eqnarray*}
%and 
while
for $i \geq 0$, we have $B_{i+1} \subset B_i \subset 4B_{i+1}$ and

\begin{eqnarray*}
|f_{B_i} - f_{B_{i+1}}| &\leq& %& \leq &
\avint_{B_{i+1}}  |f - f_{B_{i}}| ~d\mu \\
                        &\leq& %& \leq &
                        \frac{\mu(B_i)}{\mu(B_{i+1})} \avint_{B_{i}}  |f - f_{B_{i}}| ~d\mu %\\
                        ~\leq~ %& \leq &
D^2 C_12^{1-i}r  \left(\avint_{C_1 B_i} \Lip [f]^p ~d\mu \right)^{\frac{1}{p}}.
\end{eqnarray*}
Thus, we get by a telescoping sum argument that
$$
|f(x) - f(y)| \leq \sum_{i \in \Z} |f_{B_i}-f_{B_{i+1}}| \leq 4D^2 C_1 r \left(M_{2C_1r}(\Lip [f](x)^p)^{\frac{1}{p}}+M_{2C_1r}(\Lip [f](y)^p)^{\frac{1}{p}}\right).
$$

Next, we prove $(2) \Rightarrow (1)$. 
Let $r_0 = r_2/(2C_2)$ and 
%assume $B=B(x,r)$ is fixed 
fix $B=B(x,r)$
with $r<r_0
$. By subtracting the median from $f$ we can assume that
$$
\min\Big(
\mu(\{f\leq 0\} \cap B),
\mu(\{f \geq 0\} \cap B)
\Big) ~\geq~
\frac{1}{2}\mu(B).
$$
%First 
Now
define $E^\pm_k=\{\pm f\geq 2^k\} \cap B$. We first prove a weak type bound using a covering argument.
Now if $z \in E^\pm_k$ and $y \in \{\pm f \leq 0\} \cap B$, then 
$$
d(z,y) \leq 2r < 2 r_0 < r_2,
$$
so by the pointwise Poincar\'e inequality,
%we have a $B(w,r_w)$ with
there exist $w \in X$ and $r_w \leq C_2r$ such that

\begin{equation}\label{eq:intest}
\fint_{B(w,r_w)} \Lip[f]^p%(x)^p 
~d\mu \geq \frac{2^{kp-1}}{r^pC_2^p},
\end{equation}
and either $z \in B(w,r_w)$ or $y \in B(w,r_w)$. 

Suppose first that $r_w \leq r_0/8$ for each $w$ so arising.
Now by an easy argument such as in \cite[Lemma 5.1]{heinonen1998quasiconformal} %either
the collection of balls $B(w,r_w)$ %with $r_w \leq Cr$
cover either $E^\pm_k$ or %they cover
$\{\pm f \leq 0\} \cap B$.
In the latter case then we get a cover of $\{\pm f \leq 0\} \cap B$, and thus using the 5B-Covering Lemma \cite{Mattila1999} (since we have doubling at scale $2r_0$) we get
\begin{equation} \label{eq:premazya}
\mu(E^\pm_k) \leq
\frac{1}{2} \leq
\mu(\{\pm f \leq 0\} \cap B) \leq \frac{D^{3p}C_2^pr^p}{2^{kp-1}} \int_{2C_2B} \Lip[f](x)^p ~d\mu.
\end{equation}
In the case that they cover $E^\pm_k$, we obtain the same estimate by covering $E^\pm_k$ directly. %Also, if 

If instead
$r_w > r_02^{-3}$ for some $w$, then the claim follows easily from doubling and using a single ball. %That is,
%
%\begin{equation} \label{eq:weaker} %\label{eq:weakest}
%\mu(E^\pm_k) \leq \frac{1}{2} \leq \mu(\{f \leq 0\} \cap B) \leq \frac{D^{3p}C^pr^p}{2^{kp}} \int_{2CB} \Lip[f](x)^p ~d\mu
%\end{equation}
%This weak estimate alone does not suffice \ldots
%
%\begin{note}[\em Jasun, Fri 9 Mar 2018]
%The argument below seems clear enough so that we can omit Equation \eqref{eq:weaker} from the proof.
%\end{note}
%
%\ldots but if we apply
By applying Maz'ya's trick, i.e.\ applying the above argument with the truncated function
$$
u^\pm_k(x) = \pm(\min(\max(\pm f,2^{k-1}), 2^k)-2^{k-1})
$$
in place of $f$ and at level $2^{k-1}$
 in place of $2^k$, 
and since
$$
\Lip u^\pm_k = 1_{E^\pm_{k-1} \setminus E^\pm_{k}}\Lip f
$$
almost everywhere (see e.g. \cite[Lemma 2.6]{bate2015geometry}),
then
analogously as \eqref{eq:premazya}
we obtain %and then
\begin{equation} \label{eq:weakest}
\mu(E^\pm_k) %\leq \frac{1}{2} \leq \mu(\{f \leq 0\} \cap B)
\leq \frac{2^{p+1} D^{3p}C_2^pr^p}{2^{kp}} \int_{2C_2B \cap (E^\pm_{k-1} \setminus E^\pm_k)} \Lip[f](x)^p ~d\mu,
\end{equation}
which when multiplied by $2^{kp}$ and summed over $k$ gives
\begin{equation}
\fint_B |f|^p ~d\mu \leq  2^{p+1} D^{3p}C_2^pr^p \frac{\mu(2C_2B)}{\mu(B)} \fint_{2C_2B} \Lip[f](x)^p ~d\mu.
\end{equation}
Then, via H\"older's inequality, doubling and the triangle inequality, we obtain
\begin{eqnarray*}
\fint_B |f-f_B| ~d\mu &\leq&
2\fint_B |f| ~d\mu \\
&\leq&
2 \left(\fint_B |f|^p ~d\mu\right)^\frac{1}{p} %\\
~\leq~%&\leq&
2^3 D^{5+\log_2(C_2)}C_2 r \left(\fint_{2C_2B} \Lip[f](x)^p ~d\mu \right)^\frac{1}{p},
\end{eqnarray*}
which concludes the proof.
\end{proof}

The proofs of Theorems \ref{thm:classification1} and \ref{thm:classification2} %are 
can be 
more %intuitively 
succinctly
formulated with a certain function that measures the connectivity of a space
by rectifiable curves.
Let $p \in [1,\infty)$ be fixed.  Since %we are working locally, 
we consider a local notion of connectivity,
we include the scale $r_0>0$ used.

First define {$\Gamma_{x,y}(L)$} to be the set of Lipschitz curve fragments connecting $x$ to $y$ and with length at most $Ld(x,y)$, let $LSC_{0,1}(X)$ be the collection of lower semi-continuous functions from $X$ to 
$[0,1]$,
%(\textit{Just as in the case of curves, we think of these curve fragments as being parametrized by arclength, i.e. being $1$-Lipschitz, but this \sout{is irrelevant for the definition of $\alpha$ and} can always be attained by a reparametrization.})
%
and let {$\mathcal{E}_{x,y,C}^p(\tau)$} be the class of $\tau$-admissible functions% defined as
$$
\mathcal{E}_{x,y,C}^p(\tau) \defeq \{
g \in LSC_{0,1}(X)~|~ \left(M_{Cd(x,y)} g^p(x)\right)^{\frac{1}{p}} < \tau, \left(M_{Cd(x,y)} g^p(y)\right)^{\frac{1}{p}}< \tau
\}.
$$
Finally, define the connectivity function as follows:\
$$
\alpha^p_{r_0,C}(L,\tau) \defeq
\sup_{x \in X}
\sup_{y \in \bar{B}(x,r_0)}
\sup_{g \in \mathcal{E}_{x,y,C}^p(\tau)}
\inf_{\gamma \in \Gamma_{x,y}(L)}
\frac{\int_\gamma g ~ds + \gap(\gamma)}{d(x,y)}.
$$
%We always have 
Clearly 
$\alpha^p_{r_0,C}(L,\tau) \leq 1$ 
always holds,
since the trivial curve fragment $\gamma \co \{0,d(x,y)\} \to X$ with $\gamma(0)=x$ and $\gamma(d(x,y))=y$ attains the bound $1$.
For every $c \geq 1$, it is also clear that %we also have the following easy estimate:
\begin{equation}\label{eq:sublin}
\alpha^p_{r_0,C}(L,c\tau) \leq
c\alpha^p_{r_0,C}(L,\tau),
\end{equation}
{%\color{blue}
whereas nontrivial consequences occur for $X$ when \eqref{eq:sublin} holds for all $c > 0$.
}
%One motivation for
%The reason to work with
%curve fragments is their added flexibility in avoiding certain pathologies. A simple lemma is the following.

\begin{lemma}\label{lem:alphachar}
Let $1 \leq p < \infty$, let $D \geq 1$, let $r_0 > 0$, and let $X$ be a $(D,r_0)$-doubling metric measure space. If for some
$C,C',L \geq 1$ with $C \leq 2C'$ we have
$$
\alpha^p_{r_0,C}(L,\tau) \leq
C'\tau
$$
for all $\tau \in (0,1]$,
then $X$ satisfies a pointwise $(1,p)$-Poincar\'e inequality {with constant $2C'$} at scale $r_0$, and moreover a
$(1,p)$-Poincar\'e inequality at scale $r_0/(2C')$.
\end{lemma}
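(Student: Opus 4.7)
My plan is to derive the pointwise $(1,p)$-Poincar\'e inequality directly from the connectivity bound and then to invoke Theorem~\ref{thm:classificationAp} for the global conclusion. I would fix a Lipschitz function $f$, choose points $x,y\in X$ with $r=d(x,y)<r_0$, and set
\[
M \;:=\; \max\bigl\{M_{Cr}\Lip[f]^p(x)^{1/p},\ M_{Cr}\Lip[f]^p(y)^{1/p}\bigr\}.
\]
The case $M=0$ forces $f$ to be constant on small balls around $x$ and $y$, so I may assume $M>0$. Let $K$ be a global Lipschitz bound for $f$, so that $\Lip[f]\leq K$ pointwise, and introduce the normalized obstacle $g:=\Lip[f]/K\in[0,1]$ (replaced by a lower semicontinuous majorant if necessary; see below).

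The next step is to apply the hypothesis with $\tau:=M/K+\epsilon$ for arbitrarily small $\epsilon>0$. Homogeneity of the maximal function gives $(M_{Cr}g^p(z))^{1/p}\leq M/K<\tau$ for $z\in\{x,y\}$, so $g\in\mathcal{E}^p_{x,y,C}(\tau)$, and $\alpha^p_{r_0,C}(L,\tau)\leq C'\tau$ then yields a curve fragment $\gamma\in\Gamma_{x,y}(L)$ with
\[
\int_\gamma g\,ds+\gap(\gamma)\;\leq\;C'\tau\,r.
\]
The main estimate is to telescope $|f(x)-f(y)|$ along $\gamma$: on the Lipschitz pieces of $\gamma$ the fundamental theorem of calculus bounds the increment of $f$ by $\int\Lip[f]\circ\gamma\,d_\gamma\,dt$, while across each gap $(a_i,b_i)\subset\undf(\gamma)$ the global Lipschitz bound gives $|f(\gamma(b_i))-f(\gamma(a_i))|\leq K\,d(\gamma(a_i),\gamma(b_i))$. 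Summing the contributions and using $\Lip[f]=Kg$,
\[
|f(x)-f(y)|\;\leq\;\int_\gamma\Lip[f]\,ds+K\,\gap(\gamma)\;=\;K\!\left(\int_\gamma g\,ds+\gap(\gamma)\right)\;\leq\;KC'\tau\,r.
\]
Letting $\epsilon\to 0$ yields $|f(x)-f(y)|\leq C' M r$, with all dependence on $K$ cancelling. Since $M\leq M_{Cr}\Lip[f]^p(x)^{1/p}+M_{Cr}\Lip[f]^p(y)^{1/p}$, this is the pointwise $(1,p)$-Poincar\'e inequality with constant at most $2C'$ at scale $r_0$. The $(1,p)$-Poincar\'e inequality at scale $r_0/2$ then follows immediately from Theorem~\ref{thm:classificationAp}.

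The only nontrivial technical point I anticipate is that $\Lip[f]/K$ need not be lower semicontinuous, whereas $\mathcal{E}^p_{x,y,C}(\tau)$ requires obstacles in $LC_{0,1}(X)$. I would handle this by a standard outer regularization: for every $\eta>0$, Borel regularity of $\mu$ together with the structure of the centered maximal function produces a lower semicontinuous $\tilde g\geq\Lip[f]/K$ on $X$ whose maximal averages at $x$ and at $y$ exceed those of $\Lip[f]/K$ by at most $\eta$. Running the argument above with $\tilde g$ in place of $g$ and $\tau=M/K+\eta+\epsilon$ enlarges the final estimate by a factor that is absorbed in the passage from $C'$ to the stated $2C'$.
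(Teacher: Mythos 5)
Your proposal is correct and is essentially the paper's own argument: you bound $|f(x)-f(y)|$ by telescoping along near-optimal curve fragments from the $\alpha$-bound, control the gap increments by the Lipschitz constant (your division by $K$ playing the role of the paper's normalization to a $\tfrac12$-Lipschitz $f$), pass to a lower semicontinuous majorant of $\Lip[f]/K$ by a Vitali--Carath\'eodory-type regularization exactly as the paper does, and then invoke Theorem~\ref{thm:classificationAp}; taking $\epsilon\to 0$ even gives the constant $C'\leq 2C'$ with $M_{Cr}\leq M_{2C'r}$. The only blemish is the $M=0$ aside: $\mu$-a.e.\ vanishing of $\Lip[f]$ near $x$ and $y$ neither forces constancy nor would constancy on separate balls give $f(x)=f(y)$, but this case split is unnecessary since your main estimate with $\tau=\epsilon\to 0$ already yields $f(x)=f(y)$ there.
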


%\begin{note}[\em Jasun, 27 Feb 2018]
%There has been no prior discussion of `upper gradients' in this writeup.  In the proof below, however, I think that it's enough to estimate the pointwise Lipschitz constant $\Lip[f]$:\
%$$
%\int_\gamma \Lip[f] ~ds ~=~
%\int_K \Lip[f](\gamma(t)) d_\gamma(t) ~dt ~=~
%\int_K \limsup_{h\to 0} \frac{f(\gamma(t+h)) - f(\gamma(t))}{h} ~dt
%$$
%\end{note}

\begin{proof}Let $x,y \in X$ with $r:=d(x,y) \in (0,r_0)$ be arbitrary and let $f:X \to \mathbb{R}$ be any %
Lipschitz function.
%\sout{
%, and $g$ its upper gradient. By considering instead
%$$
%h ~:=~ f \cdot\max\{1,d(x,B(x,4Cr))/r\}
%$$
%\sout{and its upper gradient}
%$$
%\Lip h \leq
%\Lip f +
%\Big(\sup_{z \in B(x,4Cr)}|f(z)|\Big)1_{B(x,4Cr)\setminus %B(x,3Cr)},
%$$
%which is bounded, we can restrict attention to Lipschitz functions $f$ and bounded upper gradients $g$. Further, by a result of Keith, it is sufficient to consider lower semicontinuous $g$. Moreover, the pointwise Poincar\'e inequality is scaling invariant and thus we can assume that $g \leq 1$ and that $f$ is $1$-Lipschitz.
%}
%For any curve fragment $\gamma {\in \Gamma_{x,y}(L)}$ we have
%$$
%|f(x) - f(y)| \leq \int_{\gamma} \Lip f ~ds + \gap(\gamma).
%$$
%\sout{Taking the infimum on the right hand side, and if}
 By scale invariance of the Poincar\'e inequality, it suffices to assume that $f$ is $1/2$-Lipschitz, so
%\marginpar{\edit Moved sentence here, where it is more obviously used.}
 by defining
$$
\tau :=
\max\Big(\left(M_{Cr} (\Lip f)^p(x)\right)^{\frac{1}{p}}, \left(M_{Cr} (\Lip f)^p(y)\right)^{\frac{1}{p}} \Big) \leq
\frac{1}{2},
$$
then, by a variant of the Vitali-Caratheodory theorem (see \cite[Lemma 2.5]{sylvesterantti} for details) for any small $\epsilon \in (0,\frac{1}{2})$, there exists a lower semi-continuous $g: X \to \mathbb{R}$ so that $\Lip f \leq g < 1$ (except possibly at $x,y$) and so that
%\sout{we get}
$$
\max\big(\left(M_{Cr} g^p(x)\right)^{\frac{1}{p}}, \left(M_{Cr} g^p(y)\right)^{\frac{1}{p}} \big) \leq \tau + \epsilon \leq 1.
$$
%Then, if we use the fact that $f$ has been scaled to be $1/2$-Lipschitz, we obtain for any 
Since $f$ is assumed $1/2$-Lipschitz, every
curve fragment $\gamma {\in \Gamma_{x,y}(L)}$
satisfies
$$
|f(x) - f(y)| \leq \int_{\gamma} g ~ds + \gap(\gamma)
$$
 so by infimizing over $\gamma \in \Gamma_{x,y}(L)$, letting $\epsilon<\tau$ and by the definition of $\tau$ above we have also
\begin{align*}
%\begin{equation} \label{eq:alphapointwise}
|f(x)-f(y)| ~\leq&~
r\alpha^p_{r_0,C}(L,2\tau) \\ \leq&~
2C'r \tau \leq
2C'r(
\left(M_{Cr} \Lip[f]^p(x)\right)^{\frac{1}{p}} +
\left(M_{Cr} \Lip[f]^p(y)\right)^{\frac{1}{p}}
).
%\end{equation}
\end{align*}
This is the desired pointwise estimate at scale $r_0$. Here, we use $Cr \leq 2C'r$, which is needed for the precise constants in our pointwise estimates\footnote{We remark, that one could also, alternatively, deal with two constants, that is an estimate of the form $|f(x)-f(y)|\leq Cd(x,y)(
\left(M_{\Lambda r} \Lip[f]^p(x)\right)^{\frac{1}{p}} +
\left(M_{\Lambda r} \Lip[f]^p(y)\right)^{\frac{1}{p}}
),$ where $C,\Lambda$ would be constants and not necessarily equal. As we already have many constants to keep track of, we simplify these as equal with the slightly unfortunate restriction of $C\leq 2C'$. However, as $C'$ can always be made larger, this is not significant for us.}. Finally by Lemma \ref{thm:classificationAp} we also have a $(1,p)$-Poincar\'e inequality at scale $r_0/(2C')$.
\end{proof}

The crucial part of the proof of Theorem \ref{thm:classification2} is the following estimate.

\begin{lemma}\label{lem:submultip} 
Let $1 \leq p < \infty$, let $D \geq 1$, and let 
$X$ be a $(D,r_0)$-doubling metric measure space. 
If $\tau_0 \in (0,1)$ and %if 
$\delta \in (0,\frac{1}{2}D^{-5/p}) 
%\subset (0,\frac{1}{2}D^{-5})
$ are such that
%the space
$X$ is $(C,\delta,\tau_0,p)$-max connected at scale $r_0$, then
\begin{equation} \label{eq:submultip}
\alpha_{r_1,2C}^p\left(L,\tau\right)%\)
\leq C'\tau
\end{equation}
for every $\tau \in (0,1)$ and for the choice of parameters
\begin{equation} \label{eq:submultip2}
L = \frac{C}{1-\delta \tau_0^{1/p}}%,
\text{ and }
r_1 = \frac{r_0}{5C}
\text{ and }
C' = \frac{2D^{5/p}C}{\tau_0^{1/p}(1-2\delta D^{5/p})}.
\end{equation}
\end{lemma}

\begin{proof}
Fix $\tau,\delta,r_1>0$ as in the statement, and let $\Lambda=2D^{5/p}\tau_0^{-1/p}$.   Let $x,y$ be arbitrary with $r := d(x,y) \in (0,r_1)$, and let $g \in \mathcal{E}_{x,y,2C}^p(\tau)$. %\sout{By regularity considerations we can assume that $g$ is continuous.} 
Define
\renewcommand{\E}{E}
$$
\E=\{\ z \ |\   \left(M_{Cr} g^p(z)\right)^{\frac{1}{p}} > \Lambda\tau\ \}.
$$
We first prove that $\E$ has a desired maximal function bound at $x$ and $y$. 

Let $s \in (0,Cr)$ be arbitrary. We first show that, for every $z \in \E \cap B(x,s)$ we have 
\begin{equation}\label{eq:maxattainment}
M_{Cr} g^p(z) \leq  M_{2s}g^p(z).
\end{equation}
This is trivial when $2s\geq Cr$. Then consider $2s<Cr$; %, and show that 
for the same reasons,
the averages 
of $g$
at scales $R \in (2s, Cr)$ are strictly smaller than the left hand side of Equation \eqref{eq:maxattainment}. %.Take such $R$. Then, of \eqref{eq:maxattainment}.  
Since
$g \in \mathcal{E}_{x,y,2C}^p(\tau)$, %and 
for such $R$
our choice of $\Lambda$ implies
$$
\avint_{B(z,R)} g^p ~d\mu \leq 
D \avint_{B(x,2R)} g^p ~d\mu \leq 
D\tau^p <
\frac{\Lambda^p\tau^p}{2} <
\frac{M_{Cr} g^p(z)}{2}.
$$
%but 
%$$
%M_{Cr} g^p(z) >
%\Lambda^p\tau^p =
%\frac{2^pD^5}{\tau_0}\tau^p >
%D\tau^p,
%$$
Thus the supremum of %in the definition of 
$M_{Cr} g^p(z)$ must already be attained for radii $R\in (0,2s)$. %must be attained for 
%radii at most $2s$, %smaller balls which are also included in the definition of $M_{2s} g^p(z)$, 
%which gives the previous inequality.

Then, from Equation \eqref{eq:maxattainment} we have $E \cap B(x,s) =\{z \in B(x,s)| \left(M_{\min\{2s,Cr\}} g^p \right)^{\frac{1}{p}}>\Lambda \tau \}$.
Noting first that 
$$
\min\{2s,Cr\}+s\leq Cr + s \leq 2Cr
\text{ and } 
\min\{2s,Cr\}+s\leq 4s,
$$
by Lemma \ref{lem:localmaximal} applied to the scale $s<r_0/4$, and the maximal function bound for $g$ and by local doubling we get
$$
\avint_{B(x,s)} 1_{\E} ~d\mu \leq
\frac{\mu(\{M_{\min\{2s,Cr\}} g^p >\Lambda^p \tau^p\} \cap B(x,s)) }{\mu(B(x,s))} \leq
\frac{D^3 \int_{B(x,\min\{2s,Cr\}+s)} g^p ~d\mu}{\mu(B(x,s))\Lambda^p\tau^p} <
\frac{D^5}{\Lambda^p} <\!
\tau_0.
$$

In this application of Lemma \ref{lem:localmaximal} we need the doubling at a larger scale.
Taking the supremum over $s$ we get $M_{Cr} 1_\E(x) < \tau_0$ and symmetrically $M_{Cr} 1_\E(y) < \tau_0$.
 Let $\epsilon > 0$ be arbitrary.
By
Definition \ref{def:maxconnlevel},
there exists a curve $\gamma \co I \to X$, with
$$
\int_{\gamma} 1_{\E} ~ds
~\leq~
\delta \tau_0^{\frac{1}{p}} r.
%~<~
%\delta \tau_0^{\frac{1}{p}} r +\epsilon.
$$
Let $O=\gamma^{-1}(\E)$, which is open since the Hardy-Littlewood maximal function is lower semicontinuous, 
and define $K = (I \setminus O) \cup \{\min(I), \max(I)\}$. Then, defining $\gamma' = \gamma|_K$ we obtain a curve fragment $\gamma' \co K \to X$ with 
$$
\len(\gamma') \leq \len(\gamma) \leq Cr.
$$%, and
Now let %This can be seen as follows. Let 
$\undf(\gamma') = \bigcup_{i} (a_i, b_i)$ as in \eqref{eq:gaps} %, and denote $d_i=d(\gamma'(a_i),\gamma'(b_i))$.
and 
note that for every gap $(a_i, b_i)$ of $\gamma'$, we have $\gamma((a_i,b_i)) \subset \E$ and %thus
$$
d_i \defeq
d(\gamma(a_i),\gamma(b_i)) \leq
\len(\gamma|_{[a_i,b_i] \cap K}) \leq
\int_{\gamma|_{[a_i,b_i]}}1 ~ds =
\int_{\gamma|_{[a_i,b_i]}} 1_{\E} ~ds.
$$
 %Then 
 Thus
 summing over $i$ gives %the desired estimate for $\gap(\gamma')$.
$$
\gap(\gamma') \leq \int_{\gamma} 1_{\E} ~ds \leq
\delta \tau_0^{\frac{1}{p}} r.% +\epsilon.
$$
Now, clearly $\gamma'$ avoids $\E$ except possibly at $x,y$. Thus, by the lower semi-continuity of $g$ we also have $g(\gamma'(t)) \leq \Lambda\tau$ for every $t \in K$. In particular,
\begin{equation}\label{eq:goodsetest}
 \int_{\gamma'} g ~ds \leq \Lambda \tau \len(\gamma') \leq \Lambda \tau Cr.
\end{equation}
By the assumption,
$\delta \tau_0^{1/p}<\frac{1}{2}$, so each of these gaps is of size less than $r_1$.
By our prior estimates, we obtain
$$
\sum_i d_i =
\gap(\gamma') \leq
\delta \tau_0^{\frac{1}{p}}r.%+\epsilon.
$$
Now let $\epsilon > 0$ be given. %Take now an arbitrary positive $\epsilon$, which we will eventually send to zero. 
We have 
$M_{2Cd_i} \left(g^p(\gamma'(t%a_i
)\right)^{1/p} < \Lambda\tau$ 
%and $\left(M_{2Cd_i}g^p(\gamma'(b_i)\right)^{1/p} < \Lambda\tau$,
for $t=a_i,b_i$,
so by 
the definition of $\alpha^p_{r_1,2C}(L,\Lambda\tau)$ there are curve fragments $\gamma_i$ of length at most $Ld_i$ connecting $\gamma'(a_i)$ and $\gamma'(b_i)$ and
$$
\int_{\gamma_i} g ~ds + \gap(\gamma_i)
\leq \alpha^p_{r_1,2C}(L,\Lambda\tau)d_i + 2^{-i}\epsilon.
$$
Now, by a dilation and translation we can assume that the domains of $\gamma_i$ are $[a_i, b_i]$, and that the curves are uniformly Lipschitz. Thus, we can define a new curve
$\gamma''$ by the choices
$\gamma''(t)=\gamma'(t)$ for $t \in K$ and $\gamma''(t)=\gamma_i(t)$ for $t \in 
 [a_i',b_i']
$. This is clearly Lipschitz and

$$\len(\gamma'') \leq \len(\gamma') + \sum_{i} \len(\gamma_i) \leq (C + \delta \tau_0^{1/p}L)r \leq L r.$$

Further, using the above estimates and estimate \eqref{eq:goodsetest} 

\begin{eqnarray*}
\inf_{\overline{\gamma} \in \Gamma_{x,y}(L)} \int_{\overline{\gamma}} g ~ds + \gap(\ngam) &\leq & \int_{\gamma''} g ~ds + \gap(\gamma'') \leq \int_{\gamma'} g ~ds + \sum_i \int_{\gamma_i} g ~ds + \gap(\gamma_i) \\
&\leq & C\Lambda\tau r + \delta \tau_0^{1/p}r\alpha^p_{r_1,2C}(L,\Lambda\tau)+ \epsilon.
\end{eqnarray*}
%\end{proof}
Letting first $\epsilon \to 0$, %and 
taking suprema %a supremum 
over %$g \in \mathcal{E}_{x,y,2C}^p(\tau)$ and $x,y$ with $d(x,y) \leq r_0$, 
$g$ and $y$ and $x$,
and dividing by $r$, we obtain %gives
$$
\alpha^p_{r_1,2C}(L,\tau) \leq C\Lambda\tau + \delta \tau_0^{1/p} \alpha^p_{r_1,2C}(L,\Lambda\tau).
$$
Finally combining this with Equation \eqref{eq:sublin}, our initial choice of $\Lambda$ yields %we obtain

$$
\alpha^p_{r_1,2C}(L,\tau) \leq \frac{2D^{5/p}C}{\tau_0^{1/p}}\tau + 2\delta D^{5/p}\alpha^p_{r_1,2C}(L,\tau),
$$
and solving for $\alpha^p_{r_1,2C}(L,\tau)$ %then moving the second term on the right to the left hand side 
gives
$$
\alpha^p_{r_1,2C}(L,\tau) \leq \frac{2D^{5/p}C}{\tau_0^{1/p}(1-2\delta D^{5/p})}\tau =
C'\tau
$$
as desired.
\end{proof}

We now have all the tools to prove %the classification 
Theorems \ref{thm:classification1} and \ref{thm:classification2}.  The argument for the first result is similar to the one presented in \cite{sylvester:poincare}, so we only sketch the details.

\begin{proof}[Proof of Theorem \ref{thm:classification1}]
 Assume that the space satisfies a $(1,p)$-Poincar\'e inequality at scale $r_0$ with constant $C_1=C$, so by Theorem \ref{thm:classificationAp} it also satisfies a pointwise $(1,p)$-Poincar\'e inequality at scale $r_0/2$ with constant $C_2$.
To prove the maximal connectivity condition,
fix $x,y \in X$, put $r=d(x,y)$, fix $\tau \in (0,1)$, and fix a Borel set $E$ with $M_{C_2r} 1_E (z)%, M_{C_2r} 1_E (y)
< \tau$ for $z = x,y$. By Remark \ref{rmk:reduction} it is sufficient to assume $E$ open. We will construct a curve $\gamma$ with controlled length and which almost avoids the set $E$. Define 
 $$\mathcal{F}_x(z) = \inf_{\gamma} \int_{\gamma} (1_E + \tau)~ds.$$
 The infimum is taken over rectifiable curves $\gamma$ connecting $x$ to $y$.
 
 Since the space is $\Lambda$-quasiconvex at scale $r_0/2$ with $\Lambda$ depending only on $C$ and $D$ %the Poincar\'e constants and doubling 
 (see e.g. \cite{ChDiff99}), this infimum is finite.\footnote{This step requires a proof using a local Poincar\'e inequality which is a fairly straightforward modification of the previous one. See, e.g. \cite[Proposition 4.8]{bjornlocal}.} It is easy to see that $\Lip[\mathcal{F}_x] \leq \Lambda(1_E + \tau)$. Thus, by the pointwise Poincar\'e inequality we have
 $$
 \mathcal{F}_x(y) = %\inf_{\gamma} \int_{\gamma} (1_E + \tau)~ds = 
 \mathcal{F}_x(y) - \mathcal{F}_x(x) \leq C_2 \Lambda r (M_{C_2r} 1_E (x) + M_{C_2r} 1_E (y) + 2\tau).
 $$
 Thus, there must be some curve $\gamma$ such that
 $$
 \int_{\gamma} (1_E + \tau)~ds \leq 
 C_2 \Lambda r \big(
 M_{C_2r} 1_E (x) + M_{C_2r} 1_E (y) + 3\tau
 \big) < 
 C_2\Lambda (2\tau+3\tau) r < 
 6C_2\Lambda \tau r.
 $$
 %Now, fix $\epsilon = \tau$. 
 In particular, %Then, $\epsilon \len(\gamma) \leq C_2\Lambda (2\tau+3\epsilon) r \leq 6C_2\Lambda \epsilon r$, and thus 
 $\len(\gamma) \leq 6C_2\Lambda r$.  The same inequality also
verifies the $(C_0,\Delta,p)$-maximal connectivity condition \eqref{eq:finemax1} for $\gamma$ with constants $C_0 = 6C_2\Lambda$ and $\Delta = 6C_2\Lambda$.
\end{proof}

\begin{proof}[Proof of Theorem \ref{thm:classification2}]
Let $\delta_{p,D}=\frac{1}{2}D^{-5/p}$.  If the space is $(C,\delta,\tau_0,p)$-max connected and $\delta \in (0,\frac{1}{2}D^{-5/p})$, then by Lemma \ref{lem:submultip} we have
$$
\alpha_{r_1,2C}^p\left(L,\tau\right)%\)
\leq C'\tau$$
for $r_1 = r_0/5C$, with $2C \leq C'$.  So by Lemma \ref{lem:alphachar}, the space satisfies a $(1,p)$-Poincar\'e inequality at scale $r_1/(2C') = r_0/(10CC')$ with constant $C_p$, where $C_p$ depends quantitatively on $C'$ and hence on $\delta,D,C,\tau_0$, and $p$.
\end{proof}

\appendix
\section{On preserving uniformity by removal processes} \label{a:cutout}

Here we give a proof of Theorem \ref{thm:cutout}, our main technical tool in the construction of metric sponges.  This requires some preliminary lemmas for uniform domains.

\subsection{Initial properties of the measure}

One
%Another 
useful property of a uniform domain $\Omega$ 
corresponds roughly to the boundary $\partial \Omega$ being porous (see e.g. \cite{bonkheinrho} for a definition).  %From this we can conclude useful properties of the restricted measure on $\Omega$ (i.e.\ Lemma \ref{lem:unifarreg}). 
We recall 
a variant of \cite[Lemma 4.2]{bjornuniform} first, and sketch the proof.

\begin{lemma}[Bj\"orn-Shanmugalingam, \cite{bjornuniform}] \label{lem:corkscrew} 
If $\Omega$ is an $A$-uniform subset of $X$ then it satisfies the %$\frac{1}{4A}$-
following corkscrew condition:\
for all $x \in \Omega$ and $r \in (0,\diam(\Omega))$, there exists $y \in B_\Omega(x,r)$ so that 
$$
B\Big(y,\frac{r}{4A}\Big) %B(y,cr) 
\subset \Omega \cap B(x,r).
$$
\end{lemma}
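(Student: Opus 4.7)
The plan is to build $y$ as a point on an $A$-uniform curve joining $x$ to a carefully chosen auxiliary point $z \in \Omega$, with $d(x,z)$ selected so that the twisted-cone inequality \eqref{eq:unifdom} delivers both the required interior radius and containment in $B(x,r)$.

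First I would set $R \defeq \min\{r,\, \diam(\Omega)/2\}$, so that $r/2 \leq R \leq r$ in every case. Since $\Omega$ is $A$-uniform, any two of its points can be joined by a curve in $\Omega$, hence $\Omega$ is path-connected. A direct triangle-inequality argument applied to two points of $\Omega$ realizing (a near-supremum of) $\diam(\Omega)$ shows that $\sup_{w \in \Omega} d(x,w) \geq \diam(\Omega)/2 \geq R$. The intermediate value theorem applied to the continuous map $w \mapsto d(x,w)$ on the connected set $\Omega$ then produces $z \in \Omega$ with $d(x,z) = R$.

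Next I would take an $A$-uniform curve $\gamma \co [0,1] \to \Omega$ joining $x$ to $z$ and apply the intermediate value theorem a second time, this time to $t \mapsto d(x,\gamma(t))$, in order to select $t^* \in (0,1)$ with $d(x,\gamma(t^*)) = R/2$. Setting $y \defeq \gamma(t^*)$, both sub-diameters admit the bound
$$
\diam(\gamma|_{[0,t^*]}) \geq d(x,y) = R/2 \quad\text{and}\quad \diam(\gamma|_{[t^*,1]}) \geq d(y,z) \geq R - R/2 = R/2,
$$
where the second estimate follows from $d(x,z) = R$ and the triangle inequality. Plugging this into \eqref{eq:unifdom} yields
$$
d(y,\partial\Omega) \geq A^{-1}\min\big(\diam(\gamma|_{[0,t^*]}),\diam(\gamma|_{[t^*,1]})\big) \geq \frac{R}{2A} \geq \frac{r}{4A},
$$
so $B(y, r/(4A)) \subset \Omega$. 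Finally, for any $w \in B(y, r/(4A))$, the triangle inequality gives
$$
d(x,w) \leq d(x,y) + d(y,w) < R/2 + r/(4A) \leq r/2 + r/4 < r,
$$
using $A \geq 1$, so $B(y, r/(4A)) \subset B(x,r)$ as well.

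The only subtlety I anticipate is the existence of $z$ at the prescribed distance when $r$ is close to $\diam(\Omega)$: one cannot in general expect $\Omega$ to contain points at distance exactly $r$ from $x$ (if $x$ lies near the ``center'' of a long, thin domain, the farthest point of $\Omega$ from $x$ may be much closer than $r$). Capping the target distance at $\diam(\Omega)/2$ circumvents this uniformly, at the cost of a factor of $2$ in the corkscrew radius, which matches the constant $r/(4A)$ claimed in the lemma.
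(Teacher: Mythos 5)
Your construction follows essentially the same route as the paper's proof: pick a far-away point of $\Omega$, run the $A$-uniform curve from $x$ to it, stop at an intermediate point $y$ whose two sub-arcs both have diameter bounded below by a fixed multiple of $r$, and conclude with \eqref{eq:unifdom} plus the triangle inequality; up to the choice of the intermediate distance ($R/2$ here versus $r/4$ in the paper) the computation is the same and gives the same constant $r/(4A)$.

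One step, however, can fail as written: the claim that the intermediate value theorem produces $z\in\Omega$ with $d(x,z)=R$. What you actually know is $\sup_{w\in\Omega} d(x,w)\geq \diam(\Omega)/2\geq R$, and the value $R$ is realized only if this supremum either exceeds $R$ strictly or is attained. Both can fail when $x$ is a metric center of $\Omega$ and $r\geq \diam(\Omega)/2$: take $\Omega$ the open unit disk in $\R^2$, $x=0$, $r=\tfrac32$, so $R=\diam(\Omega)/2=1$ while $d(0,w)<1$ for every $w\in\Omega$. Your cap at $\diam(\Omega)/2$ does not remove the problem, because the cap value is exactly the (possibly unattained) supremum. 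The repair is one line: the argument only ever uses $d(x,z)\geq R/2+r/4$ (this is what yields $d(y,z)\geq r/4$, while $\diam(\gamma|_{[0,t^*]})\geq d(x,y)=R/2\geq r/4$ is automatic), and in both cases for $R$ one checks, using $r<\diam(\Omega)$, that $R/2+r/4<\diam(\Omega)/2\leq\sup_{w\in\Omega}d(x,w)$, so a point $z$ with $d(x,z)>R/2+r/4$ always exists and the rest of your proof goes through verbatim with the same constant. (The paper avoids the issue by placing $y$ at distance exactly $r/4$ from $x$, which only requires a far point at distance greater than $r/2$, again a value strictly below the supremum.)
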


\begin{proof}
Let $x \in \Omega$ and $r \in (0,\diam(\Omega))$ be arbitrary. Choose $y \in \Omega$ so that
$$
d(x,y) \geq \frac{\diam(\Omega)}{2}.
$$
Then, let $\gamma$ be the $A$-uniform curve connecting $x$ to $y$. By continuity, there is a $t$ such that $d(\gamma(t), x) = r/4$, and thus also $d(\gamma(t),y) \geq r/4$. Therefore,
$$
d(\gamma(t), \Omega^c) \geq \frac{1}{A} \min\{\diam(\gamma|_{[0,t]}), \diam(\gamma|_{[t,1]})\} \geq \frac{r}{4A},
$$
and thus $B(\gamma(t), \frac{r}{4A}) \subset \Omega$ and $$
B\big(\gamma(t), \frac{r}{4A}\big) \subset
B\big(\gamma(t), \frac{r}{2}\big) \subset
B(x,r),
$$which completes the proof.
\end{proof}

From this we conclude useful properties of the restricted measure on $\Omega$, such as Ahlfors regularity %i.e. (Lemma \ref{lem:unifarreg}). 
and a basic volume (or measure) estimate for removed ``%geometric 
obstacles.''

\begin{proof}[Proof of Lemma \ref{lem:unifarreg}]
  Let $x \in X,r \in (0,\diam(\Omega))$ and let $C_{AR, \Omega}=(4A)^Q C_{AR}$. Firstly, the upper bound in the Ahlfors $Q$-regularity condition is trivial:
  $$
	\mu(B(x,r)\cap\Omega)) \leq \mu(B(x,r)) \leq C_{AR} r^Q \leq C_{AR, \Omega} r^Q.
	$$
  Now, by %the corkscrew condition from 
	Lemma \ref{lem:corkscrew} %we have that 
	there is a $y \in B(x,r) \cap \Omega$ such that $B(y,\frac{r}{4A}) \subset \Omega$, in which case %. Then,
  $$
	\mu(B(x,r) \cap \Omega)) \geq \mu\left(B\left(y,\frac{r}{4A}\right)\right) \geq \frac{r^Q}{(4A)^Q C_{AR}} \geq \frac{r^Q}{C_{AR, \Omega}}
	$$
	and the result follows.
\end{proof}

\begin{proof}[Proof of Lemma \ref{lem:volobstacles}]
Scale the statement so that $\diam(\Omega)=1$. Fix $C_\delta = C_{AR}^3(2L(1+\delta))^Q \delta^{-Q}$
and, for $l > k$, let $\mathcal{R}_{x,r}^l$ be the set of all $R \in \mR_{\nseq,l}$ so that $R \cap B(x,r) \neq \emptyset$. 
 It is sufficient to prove that
$$
\mu\Big(
\bigcup_{R \in \mR_{x,r}^l} R
%B(x,r) \cap \bigcup_{R \in \mR_{\nseq,l}} R
\Big) \leq \frac{C_\delta}{n_l^Q} r^Q
$$
for every $l > k$;
the desired estimate follows from summation over $l$.%by adding these.

Given $R \in \mR_{x,r}^l$ let %$x_R$ be an arbitrary point in $R \in \mR_{\nseq,l}$.
$x_R \in R \cap B(x,r)$, so $R \subset B(x_R, Ls_l)$ follows from Definition \ref{def:sparsecoll}. 
%\sout{
%and that $R$ and $B(x_R, \delta s_{l-1})$ are disjoint. 
%}
Since $r \geq s_k> s_l$ we have 
$$
B(x_R, \delta s_{l-1}/2) \subset 
B(x,r+\delta s_k) \subset B(x,(1+\delta)r).
$$
By separation, the balls $B(x_R, \delta s_{l-1})$ are disjoint for distinct $R$.
%if $R \cap B(x,r) \neq \emptyset$. Thus,
We then estimate using Ahlfors regularity
\begin{eqnarray*}
 \mu\Big(B(x,r) \cap \bigcup_{R \in \mR_{\nseq,l}} R\Big) ~\leq~
 \sum_{R \in \mR_{x,r}^l} 
 \mu(R) &~\leq~& 
 \sum_{R \in \mR_{x,r}^l} 
 \mu(B(x_R, Ls_l)) \\ &~\leq~&
 \frac{C_{AR}^2 (2Ls_{l})^Q}{\delta^Q s_{l-1}^Q}
 \sum_{R \in \mR_{x,r}^l} 
 \mu(B(x_R, \delta s_{l-1}/2)) \\ &~\leq~&
\frac{C_{AR}^2 2^Q L^Q}{\delta^Q n_l^Q}\mu(B(x,(1+\delta)r)) \\ &~\leq~&
\frac{C_{AR}^3(2L(1+\delta))^Q}{\delta^Q n_l^Q} r^Q ~=~
\frac{C_\delta }{n_l^Q} r^Q.
\end{eqnarray*}
as desired.
\end{proof}

\subsection{Preserving uniformity}
One of the forthcoming technical issues in removing a set $R$ 
%A technical issue that arises later, 
is that an arbitrary uniform curve relative to a pair of points in $X \setminus R$ may travel ``too far away'' from $R$.  To resolve this, we verify the following result, in whose proof we use the argument from \cite[Theorem 4.1]{vaisala1988}. 

To fix
%We need the following 
notation, for a metric space $X=(X,d)$ and for $\epsilon > 0$ we denote $\epsilon$-neighborhoods of subsets $Y$ of $X$ by
$$
N_\epsilon(Y) := \bigcup_{x \in Y} B(x,\epsilon).
$$

\begin{lemma}\label{lem:collaring}
Fix $D,C,A\geq 1$. Let $X$ be a $C$-quasiconvex, $D$-metric doubling metric space.  If $S$ is a bounded, $A$-co-uniform domain in $X$, then 
%\sout{$\Omega$ is $4( \textcolor{red}{C+A^2} )$-annularly uniform.}
for every $\epsilon >0$ there is a constant $L_{\epsilon} = L_\epsilon(C,D,A)$ such that for every $x,y \in N_{\epsilon \diam(S)}(S) \setminus S$, there exists a $L_\epsilon$-uniform curve $\gamma$ with respect to $x$, $y$, and $X \setminus S$ with $\gamma \subset N_{4(C+A^2)\epsilon\diam(S)}(S)$.
\end{lemma}

\begin{proof}
The statement is scale invariant, so
assume $\diam(S)=1$.  Fix $\epsilon>0$. %in Definition \ref{def:annularlyunif}. 
Let $x,y \in N_{\epsilon}(S) \setminus S$ be arbitrary. If $d(x,y) \leq \epsilon$, the result follows simply by choosing the $A$-uniform curve with respect to $x$, $y$, and $X \setminus S$. Thus assume $d(x,y) > \epsilon$, 
in which case
$$
d(x,y) \leq 2\epsilon + \diam(S) \leq 2\epsilon + 1.
$$
Let $S_\epsilon$ be a maximally $\epsilon$-separated subset of $N_{
C
\epsilon}(S) \setminus S$, that is for each distinct $a,b \in S_\epsilon$ we have $d(a,b) \geq \epsilon$. The union $ \bigcup_{s \in S_\epsilon} B(s,2\epsilon)$ covers $N_{C\epsilon}(S) \setminus S$, so by quasiconvexity, connectivity of $\partial S$, and doubling, %\sout{we can find}
there exists $M_0 \in \N$ with dependence $M_0 = M_0(\epsilon, C, D)$ as well as a chain of points
$\{x_i\}_{i=1}^M$ in $S_\epsilon\cup\{x,y\}\subset N_{C\epsilon}(S) \setminus S$ satisfying $x_1=x$, $x_M=y$, $3\leq M \leq M_0$, and
$$
\frac{\epsilon}{2} ~\leq~ d(x_i,x_{i+1}) ~<~ 2\epsilon.
$$
Note, quasiconvexity is used simply to ensure that the points $x,y$ can be connected to $\partial S$.
%\begin{note}[Jasun, 24 Aug 2018]
%{\edit Why are we choosing $C\epsilon$ here already, instead of just $\epsilon$?  Did we use quasiconvexity already?}
%\end{note}
For $i=1, \dots, M-1$, let $\gamma_i \co [0,1] \to X$ be the $A$-uniform curve with respect to $x_i$, $x_{i+1}$, and $X \setminus S$, so %, which connect $x_i$ and $x_{i+1}$ and 
$\diam(\gamma_i) \leq 2A\epsilon$.
By continuity, there exists $t_i \in [0,1]$ such that
$$
\frac{\epsilon}{4} \leq
\min\{
\diam(\gamma_i|_{[0,t_i]}), \diam(\gamma_i|_{[t_i,1]})
\}.
%\leq
%A d(\gamma_i(t_i),\partial\Omega).
$$
%which is possible by continuity and the distance bound for $x_i$. 
Then for $i=1, \dots, M-2$, let 
%Then {\edit for $i=1, \dots, N-2$, let } 
%\sout{construct }
$\gamma_i'$ %to 
be the $A$-uniform curve with respect to $\gamma_i(t_i)$, $\gamma_{i+1}(t_{i+1})$, and $X \setminus S$.  Define $\gamma$ to be the concatenation of $\gamma_1|_{[0,t_1]}$ with $\gamma_M|_{[t_{M-1},1]}$ and all the $\gamma_i'$. 
Direct calculation and Definition \ref{def:unifdom} imply that 
\begin{align*}
\diam(\gamma_i') ~\leq&~
A d(\gamma_i(t_i),\gamma_{i+1}(t_{i+1})) \\ ~\leq&~
A\big(
d(\gamma_i(t_i),x_{i+1}) + d(x_{i+1},\gamma_{i+1}(t_{i+1}))
\big) \leq
A\big(
\diam(\gamma_i) + \diam(\gamma_{i+1})
\big) \leq
4A^2\epsilon,
\end{align*}
and 
$d(\gamma_i'(t), S) \geq \frac{\epsilon}{8A^2}$ 
for $t \in [0,1]$.  Now,
$$
\diam(\gamma) \leq 4MA^2\epsilon \leq 4MA^2 d(x,y).
$$
Also, if $\gamma(t)$ intersects with $\gamma_i'$ then
$$
{%\color{red}
d(\gamma(t), S) \geq 
\frac{\epsilon}{8A^2} \geq 
\frac{\diam(\gamma)}{32MA^4} \geq \frac{1}{32MA^2} \min\{\diam(\gamma|_{[0,t]}),\diam(\gamma|_{[t,1]})\}}.
$$
%This gives the uniformity except for small $t$ and large $t$, when 
As for the cases when $\gamma(t)$ coincides with a point on $\gamma_1(s)$ or $\gamma_M(s)$, 
the estimate follows from the $A$-uniformity of $\gamma_1$ and $\gamma_M$. To clarify, this involves some case checking. We expand only the case of $\gamma(t)$ coinciding with $\gamma_1(s)$, when we have $d(\gamma(t),\Omega^c) = d(\gamma_1(s),\Omega^c) \geq \frac{1}{A}\min\{\diam(\gamma_1|_{[0,s]}),\diam(\gamma_1|_{[s,1]})\}$. We also have $\diam(\gamma_1|_{[0,s]})=\diam(\gamma|_{[0,t]})$, so if the minimum is attained with $\diam(\gamma_1|_{[0,s]})$ the inequality is immediate. If the minimum is attained by the second option, then  we have $\diam(\gamma_1|_{[s,1]})\geq \epsilon/4 \geq \frac{1}{4A}\diam(\gamma|_{[0,t]}) $ by the choice of $t_1$. In combination, we get that $\gamma$ is an $32MA^4$-uniform curve contained in $N_{2(C+A)\epsilon \diam(S)}(S)$. The containment follows since $\gamma_i' \subset N_{2(C+A)\epsilon \diam(S)}(S)$.
\end{proof}

We will need the following simple lemma on uniform domains.

\begin{lemma} \label{lem:basicuniflemma} Let $\Omega$ be an open domain and let $x,y \in \Omega$.  If $\gamma \co [0,1] \to \Omega$ is an $A$-uniform curve with respect to $x$, $y$, and $\Omega$, then for every $t \in [0,1]$ it holds that
$$
{%\color{red} 
d(\gamma(t), \Omega^c) \geq \frac{1}{4A}\min\{d(x, \Omega^c) + \diam(\gamma|_{[0,t]}),d(y, \Omega^c) + \diam(\gamma|_{[t,1]})\}.}
$$
\end{lemma}

\begin{proof} {%\color{red}
%Suppose that $\min\{\diam(\gamma|_{[0,t]}),\diam(\gamma|_{[t,1]})\} = \diam(\gamma|_{[0,t]}).$ The other case, where the minimum is attained by the second term is similar. 
Up to symmetry, assume $\diam(\gamma|_{[0,t]}) \leq \diam(\gamma|_{[t,1]})$
If
$$
\diam(\gamma|_{[0,t]}) \geq \frac{d(x, \Omega^c)}{2},
$$
then the claim follows from $A$-uniformity. If on the other hand
$$
\diam(\gamma|_{[0,t]}) \leq
\frac{d(x, \Omega^c)}{2},
$$
%then by symmetry assume that the minimum on the left is $\diam(\gamma|_{[0,t]})$. T
then,
%$$
%d(\gamma(t), x) \leq
%\diam(\gamma|_{[0,t]}) \leq
%\frac{1}{2} d(x, \Omega^c),
%$$
%so 
by the Triangle inequality,
\begin{align*}
d(\gamma(t), \Omega^c) \geq&~
d(x,\Omega^c) - d(\gamma(t), x) \\ \geq&~
d(x,\Omega^c) - \diam(\gamma|_{[0,t]}) \geq 
\frac{d(x, \Omega^c)}{2} 
%Ultimately, as desired, we get $$d(\gamma(t),\Omega^c) \geq \frac{1}{2} d(x,\Omega^c) 
\geq \frac{1}{4} d(x,\Omega^c) + \frac{1}{4} \diam(\gamma|_{[0,t]})
%.$$}
\end{align*}
which, with $A \geq 1$, is the desired result.
}
\end{proof}

We are now ready to show that for co-uniform subsets $S$ of uniform domains $\Omega$, their relative complements $\Omega \setminus S$ are also uniform.

\begin{proof}[Proof of Theorem \ref{thm:cutout}]
Let $A_\Omega = A_1 > 0$ and $A_S = A_2 > 0$ be the uniformity constants of $\Omega$ and $X \setminus S$, respectively. Fix $\epsilon = \frac{d(S,\Omega^c)}{\diam(S)}$. Without loss of generality assume $\diam(S)=1$. 
Letting $%0 < \frac{1}{A'} \ll 
\delta_0 \in (0,\min\{1/A_{\Omega},1/A_{S}\})$ to be determined later,
we show that $\Omega \setminus S$ is $A'$-uniform for some $A' \geq 1/\delta_0$, i.e.\ that for each $x,y \in \Omega \setminus S$ there is a curve $\gamma$ so that 
\begin{equation} \label{eq:uniformfirst}
d(\gamma(t), \Omega^c \cup  S) ~\geq~
\frac{1}{A'} \min\{
{\rm diam}(\gamma|_{[0,t]}),
{\rm diam}(\gamma|_{[t,1]})
\}
\end{equation}
and where ${\rm diam}(\gamma) \leq A' d(x,y)$.

%\begin{note}[\em Jasun, 11 Oct 2018]
%{\edit Have we added the diameter condition to the definition of uniformity?}
%\end{note}

Let $x,y \in \Omega \setminus S$ be arbitrary. If $d(x,y)<\frac{\epsilon}{3(A_S + A_{\Omega})}$, the claim follows by either using the uniformity of $X \setminus S$ or the uniformity of $\Omega$, depending on which of $S$ or $\Omega^c$ is closer to $x$ or $y$. Thus, without loss of generality assume $d(x,y) \geq \frac{\epsilon}{3(A_S + A_{\Omega})}$. Also, without loss of generality, assume $x,y \notin \partial S$. The case of either $x,y \in \partial S$ can be obtained by using the uniformity of $\Omega$ to connect points $x',y' \in \Omega \setminus \bar{S}$ to $x$, $y$, respectively, with
$$
\max\{ d(x,x'),d(y,y') \} ~\leq~
\frac{1}{A_{\Omega}^2}d(S, \Omega^c).
$$
By uniformity of $\Omega$, there is an $A_\Omega$-uniform
curve $\gamma_0 \co [0,1] \to X$ 
with respect to $x$, $y$, and $\Omega$, 
so define the set
$$
\mathcal{B} = \big\{
t \in [0,1] ~|~
d(\gamma_0(t), S) < \delta_0 \min\{
{\rm diam}(\gamma_0|_{[0,t]}),
{\rm diam}(\gamma_0|_{[t,1]})
\}
\big\}.
$$
If $\mathcal{B} = \emptyset$ then $\gamma_0$ satisfies \eqref{eq:uniformfirst} with $\delta_0$ in place of $\frac{1}{A'}$, and thus $\gamma = \gamma_0$ would be the desired  
%$\frac{1}{\delta_1}$-uniform 
curve. 
%with respect to $x$, $y$, and $\Omega \setminus S$. 
Otherwise, $\mathcal{B}$ is open, and 
hence a countable union of disjoint open intervals,
$$
\mathcal{B} = \bigcup_{i \in J} I_i %(a_i,b_i)
$$
for some possibly finite subset $J \subset \N$
with $I_i = (a_i,b_i)$. 
Note that for each $z=a_i, b_i$ we have equality in the above condition, that is
\begin{equation} \label{eq:cutoutendpts}
d(\gamma_0(z), S) ~=~
\delta_0 \min\{
{\rm diam}(\gamma_0|_{[0,z]}),
{\rm diam}(\gamma_0|_{[z,1]})
\}.
\end{equation}
Let $C' = %C'(C,A_S,\epsilon)=
4(C+A_S^2+\epsilon)$ and let $L = %L(C,A_S,D,\epsilon)=
L_{\epsilon/(3C')}
= L_{\epsilon/(3C')}(C,D,A_S)$ be the constant from Lemma \ref{lem:collaring}. We now replace each $\gamma_0|_{ I_i}%{(a_i,b_i)}
$ 
with a new curve $\gamma_i$ so that the concatenation 
%$$
%\bar\gamma := (\gamma_0|_{[0,1] \setminus \mathcal{B}}) \cup \bigcup_i \gamma_i
%$$ 
satisfies \eqref{eq:uniformfirst}; 
in particular, we claim that we can choose $\gamma_i$ to have
\begin{equation} \label{eq:cutout}
d(\gamma_i, \Omega^c) \geq 
\frac{ \max\{d(\gamma_0(a_i), S), d(\gamma_0(b_i), S)\}}{4} + \frac{\epsilon}{12C'}d(\gamma_0(a_i),\gamma_0(b_i))
\end{equation}
and with the constant $C'' = \max\{L, A_S\}$,
\begin{equation} \label{eq:cutdiam}
\diam(\gamma_i) \leq C'' %\max\{L,A_S\} 
d(\gamma_0(a_i),\gamma_0(b_i))
\end{equation}
holds for each $i \in \mathbb{N}$.%

We proceed by cases, as follows. Suppose first that
\begin{equation} \label{eq:cutoutcases}
\frac{\epsilon}{3 C'%{\color{red} C_{A,S}}
} > \max\{
d(\gamma_0(a_i),S),
d(\gamma_0(b_i),S)
\}
\end{equation}
is true.  So by
Lemma \ref{lem:collaring}
with $\epsilon/(3C')$ in place of $\epsilon$,
%the annular-connectivity 
there is a curve $\gamma_i$ in $N_{\epsilon/3}(S)$ that joins $\gamma_0(a_i)$ and $\gamma_0(b_i)$ and which is $L$-uniform with respect to $X \setminus S$. In particular, \eqref{eq:cutdiam} holds with
$C''=L$ %$\diam(\gamma_i) \leq L d(\gamma_0(a_i), \gamma_0(b_i))$ %. Then
and %\eqref{eq:cutoutcases} implies that
our choice of $\epsilon$ yields
$$
d(\gamma_i(t), \Omega^c) ~\geq~
d(S, \Omega^c) - d(\gamma_i(t), S) ~\geq~
\epsilon - \frac{\epsilon}{3}
%\frac{\epsilon}{3} ~\geq~
%\max\{d(\gamma_0(a_i), S), d(\gamma_0(b_i), S)\},
$$
so \eqref{eq:cutout} follows from
\eqref{eq:cutoutcases} and 
$$
d(\gamma_0(a_i),\gamma_0(b_i)) \leq \diam(S) + d(\gamma_0(a_i),S) + d(\gamma_0(b_i),S) \leq 
1+ \frac{2\epsilon}{C'}.
$$%From these we get \eqref{eq:cutout}.

If \eqref{eq:cutoutcases} is false %, we construct the curve $\gamma_i$ in a different way. 
then instead
by co-uniformity, there is a $A_S$-uniform curve $\gamma_i$ with respect to $\gamma_0(a_i)$, $\gamma_0(b_i)$, and $X \setminus S$. %, so that $\diam(\gamma_i) \leq A_S d(\gamma_0(a_i),\gamma_0(b_i))$. We still need to establish 
We now claim that 
the distance estimates \eqref{eq:cutout}, \eqref{eq:cutdiam}
hold for these curves $\gamma_i$.  To this end,
by symmetry we may assume that 
\begin{eqnarray*}
d(\gamma_0(a_i),S) ~\geq~ %&\geq& 
\max\Big\{
\frac{\epsilon}{3 C'},%{\color{red} C_{A,S}}},
%\\
%{\rm dist}(\gamma_0(a_i),\partial S) &\geq&
d(\gamma_0(b_i), S)%,
\Big\}.
\end{eqnarray*}
Introduce the short-hand notation $x_i\defeq \gamma_0(a_i), y_i \defeq \gamma_0(b_i)$. Assume now that
$
\delta_0 < \frac{\epsilon}{32A_{\Omega}A_SC'}
$,
which with \eqref{eq:cutoutendpts} implies that
\begin{align*}
d(x_i, \Omega^c) ~\geq&~ 
\frac{1}{A_{\Omega}} \min\{{\rm diam}(\gamma_0|_{[0,a_i]}), {\rm diam}(\gamma_0|_{[a_i,1]}) \} = %\\ ~=&~
\frac{1}{\delta_0 A_{\Omega}} d(x_i, S) \geq %\\ ~\geq&~
%\max\Big\{
\frac{1}{\delta_0 A_{\Omega}} \frac{\epsilon}{3 C'}.%, 
%\frac{1}{\delta_0 A_{\Omega}} 
%d(\gamma_0(b_i), S).
%\Big\} %~\geq~ %\\ ~\geq&~
%\max\{ 
%\frac{1}{8A_S}, 
%\frac{1}{8A_S} d(\gamma_0(b_i), S)
%\},
\end{align*}
Then combining the previous estimates and the choice of $\delta_0$ yields
$$
d(x_i,y_i) \leq \diam(S) + 2d(x_i,  S) \leq \frac{6C'\delta_0 A_\Omega}{\epsilon}d(x_i, \Omega^c) \leq \frac{1}{8A_S}d(x_i, \Omega^c),
$$
and
$$
d(x_i,y_i) \leq \diam(S) + 2d(x_i, S) \leq \frac{6C'}{\epsilon}d(x_i, S).
$$
We have \eqref{eq:cutdiam} and therefore
\begin{align*}
d(\gamma_i, \Omega^c) \geq 
d(x_i,\Omega^c) - \diam(\gamma_i) \geq&~
d(x_i,\Omega^c) - A_S d(x_i,y_i) \\ \geq&~
\frac{d(x_i, S)}{2} =
 \frac{ \max\{d(x_i, S), d(y_i, S)\}}{2}.
\end{align*}
In particular, \eqref{eq:cutout} holds in both cases for the $\gamma_i$ as constructed.

In either case,  $C''$-uniformity of $\gamma_i$ with respect to $X \setminus S$ and %we have from 
Lemma \ref{lem:basicuniflemma} imply that  for all $t$ in the domain of $\gamma_i$
\begin{equation} \label{eq:cutout2}
d(\gamma_i(t), S) \geq 
\frac{1}{C''}
%\frac{1}{ \max\{L_\epsilon,A_S\}} 
\frac{ \min\{d(x_i, S) + \diam(\gamma_i|_{[a_i,t]}), d(y_i, S) + \diam(\gamma_i|_{[t,b_i]})\}}{4}.
\end{equation}
%Further, if $d(\gamma_0($
Now, similarly to the proof of Lemma \ref{lemma:quasiconvex}  re-parametrize each %rescale the domains of 
$\gamma_i$ to have domain %to fit into 
$I_i=[a_i,b_i]$ %$[a_i,b_i]$ 
and define 
the concatenation 
$\gamma \co [0,1] \to X$ by $\gamma(t) = \gamma_i(t)$ if $t \in  I_i
$, 
and $\gamma(t)=\gamma_0(t)$ for all other $t \in [0,1]$. This concatenated curve is the desired uniform curve and we will proceed to estimate its diameter and distance to $S \cup \Omega^c$.

The diameter bounds for $\gamma_i$ in \eqref{eq:cutdiam} give rather directly that $\gamma$ is continuous.
%\sout{
%$\gamma(t) = \gamma_0(t)$ for $t \not\in \mathcal{B}$ and $\gamma(t) = \gamma_i(t)$ for $t \in [a_i,b_i]$
%
%$$
%\gamma(t) := 
%1_{[0,1] \setminus \mathcal{B}}(t) \gamma_0(t) + 
%\sum_{i=1}^\infty 1_{[a_i,b_i]}(t)\gamma_i(t).
%$$
By \eqref{eq:cutdiam} each $\gamma_i$ has diameter %$d_i$ 
at most
$$
\diam(\gamma_i)
\leq
%d_i \leq
C'' %A_S 
d(x_i, y_i) \leq
C'' %\max\{L_\epsilon, A_S\}
\diam(\gamma_0),
$$
so it follows that the concatenation $\gamma$ has diameter %$d_i'$ 
at most
\begin{eqnarray*}
\diam(\gamma) \leq \diam(\gamma_0) + 2\max_i \diam(\gamma_i)
 \leq
%d_i' \leq
(1+2C'')%\max\{L_\epsilon, A_S\}) 
\diam(\gamma_0) \leq
A_\Omega (1+2C'')%\max\{L_\epsilon, A_S\})
d(x,y).
\end{eqnarray*}
 To check the uniformity condition \eqref{eq:unifdom}, %for each $t \in [0,1]$, 
 we again proceed by cases. Supposing first that $t \notin  I_i
$ for any index $i$, %then for $I = [0,t], [t,1]$
put $U_0 = [0,t]$ and $U_1 = [t,1]$.
For $k=0,1$
we have from \eqref{eq:cutdiam}
\begin{equation}\label{eq:lenbound}
\diam(\gamma|_{U_k}) \leq 
\diam(\gamma_0|_{U_k}) + 2\max_{i, I_i\subset U_k}\diam(\gamma_i|_{ I_i}) \leq %{[a_i,b_i]}) \leq 
(1+2C'')%\max\{L_A, C_A\})
\diam(\gamma_0|_{U_k})
\end{equation}

 Then, we get
\begin{equation} \label{eq:goodtext}
  d(\gamma(t), \Omega^c) \geq 
  \frac{1}{A_{\Omega}} \min_{k=0,1}{\rm diam}(\gamma_0|_{U_k}) \geq %\min\{{\rm diam}(\gamma_0|_{[0,t]}), {\rm diam}(\gamma_0|_{[t,1]})\} \geq 
  \frac{
  1 %\min\{{\rm diam}(\gamma|_{[0,t]}), {\rm diam}(\gamma|_{[t,1]})\}
  }{A_{\Omega}(1+2C'')}%\max\{L_A, C_A\})},
  \min_{k=0,1}{\rm diam}(\gamma_0|_{U_k}),
\end{equation}
and (from the definition of $\mathcal{B}$)
\begin{equation}\label{eq:goodtint}
 d(\gamma(t), S) \geq 
 \delta_0 \min_{k=0,1}{\rm diam}(\gamma_0|_{U_k}) \geq %\min\{{\rm diam}(\gamma_0|_{[0,t]}), {\rm diam}(\gamma_0|_{[t,1]})\} \geq 
 \frac{
 \delta_0
 %\min\{{\rm diam}(\gamma|_{[0,t]}), {\rm diam}(\gamma|_{[t,1]})\} 
 }{(1+2C'')}
  \min_{k=0,1}{\rm diam}(\gamma_0|_{U_k}).%\max\{L_A, C_A\})}.
\end{equation}

%Thus, we get the $A-$uniformity condition with 
%$\max\{ \frac{(1+\max\{L_A, C_A\})}{\delta_0}, (1+\max\{L_A, C_A\})A_{\Omega}\}$. 
%$A = (1+C'') \max\{\frac{1}{\delta_0},A_\Omega\}$.
Now %Next, 
consider the remaining case where $t \in  I_i%(a_i, b_i)
$ for some $i \in J$, 
in which case $U_k \cup { I_i}$ and $U_k \cap  I_i$ and $U_k \setminus  I_i$ are all intervals for $k=0,1$. Similarly as above,
\begin{equation}\label{eq:bound-gambeg}
\diam(\gamma|_{U_k}) \leq 
\diam(\gamma_0|_{U_k \setminus I_i}) + \diam(\gamma_i|_{U_k \cap I_i}) \leq 
(1+2C'')%\max\{L_A, C_A\})
(\diam(\gamma_0|_{U_k \setminus  I_i}) + %(a_i,b_i)}) + 
d(x_i,y_i)).
%\diam(\gamma|_{[0,t]}) \leq 
%\diam(\gamma|_{[0,a_i]}) + \diam(\gamma_i|_{[a_i,t]}) \leq 
%(1+2C'')%\max\{L_A, C_A\})
%(\diam(\gamma_0|_{[0,a_i]}) + d(x_i,y_i),\gamma_0(b_i))).
\end{equation}
%and
%\begin{equation}\label{eq:bound-gamend}
%\diam(\gamma|_{[t,1]}) \leq 
%\diam(\gamma|_{[b_i,1]}) + \diam(\gamma_i|_{[t,b_i]}) \leq 
%(1+2C'')%\max\{L_A, C_A\})
%(\diam(\gamma_0|_{[b_i,1]}) + d(\gamma_0(a_i),\gamma_0(b_i))).
%\end{equation}

Taking a minimum over $k=0,1$ in \eqref{eq:bound-gambeg} gives%with $\diam(\gamma_0|_{U_k \setminus {\color{blue}\bf I_i}})\leq \diam(\gamma_0|_{U_k \cup {\color{blue}\bf I_i}})$ 
%$\diam(\gamma_0|_{[0,a_i]})\leq \diam(\gamma_0|_{[0,b_i]})$ and with $\diam(\gamma_0|_{[b_i,1]})\leq \diam(\gamma_0|_{[a_i,1]})$ 
\begin{align}%\begin{eqnarray}
\label{eq:minbounds} %&& \\
\min_{k=0,1} \diam(\gamma|_{U_k}) \leq %&\leq& 
(1+2C'')(\min_{k=0,1} \diam(\gamma_0|_{U_k \setminus I_i})
+ d(x_i,y_i)). %\nonumber \\
%\min\{\diam(\gamma|_{[0,t]}),\diam(\gamma|_{[t,1]})\} &\leq& 
%(1+2C'')(\min\{\diam(\gamma_0|_{[0,a_i]}),\diam(\gamma_0|_{[a_i,1]})\} \nonumber \\
%&& \ \  \ \  + \ d(\gamma_0(a_i),\gamma_0(b_i)))\nonumber \\
%\min\{\diam(\gamma|_{[0,t]}),\diam(\gamma|_{[t,1]})\} &\leq& 
%(1+2C'')(\min\{\diam(\gamma_0|_{[0,b_i]}),\diam(\gamma_0|_{[b_i,1]})\} \nonumber \\
%&& \ \  \ \  + \ d(\gamma_0(a_i),\gamma_0(b_i))) \nonumber 
\end{align}%\end{eqnarray}
Combining our work with $\frac{\epsilon}{12C'}\geq \delta_0$ gives the following.
\begin{align*}
d(\gamma(t), \Omega^c) \stackrel{\eqref{eq:cutout}}{\geq}&~ 
\frac{ \max\{d(x_i, S), d(y_i, S)\}}{4} \ + \  \frac{\epsilon}{12C'}d(x_i,y_i) \\
\stackrel{\tiny\begin{array}{c}\eqref{eq:cutoutendpts} \\ \eqref{eq:minbounds}\end{array}}{\geq}\!\!\!\!&~
\frac{ \delta_0 \displaystyle \min_{k=0,1} \diam(\gamma|_{U_k}) }%\min\{\diam(\gamma|_{[0,t]}),\diam(\gamma|_{[t,1]})\} }
{1+2C''}
\\
d(\gamma(t), S) \stackrel{\eqref{eq:cutout2}}{\geq}&~
\frac{1}{C''}
\frac{ \min\{d(x_i, S) + \diam(\gamma_i|_{[a_i,t]}), d(y_i, S) + \diam(\gamma_i|_{[t,b_i]})\}}{4} \\
\stackrel{\eqref{eq:cutoutendpts}}{\geq}&~
\frac{\delta_0}{4C''}
 \min_{k=0,1}\{\diam(\gamma_0|_{U_k \setminus  I_i}) + \diam(\gamma_i|_{U_k \cap  I_i}), \diam(\gamma_0|_{U_k \cup  I_i}) + \diam(\gamma_i|_{I_i \setminus U_k})\} \\
%\!\!\!\!\frac{\delta_0}{4C''}
%%\frac{1}{ \max\{L_\epsilon,A_S\}} 
% \min\{\diam(\gamma_0|_{[0,a_i]}) + \diam(\gamma_i|_{[a_i,t]}), \diam(\gamma_0|_{[a_i,1]}) + \diam(\gamma_i|_{[a_i,t]}), \\
%&& \!\!\!\!\diam(\gamma_0|_{[0,b_i]}) + \diam(\gamma_i|_{[t,b_i]}),
%\diam(\gamma_0|_{[b_i,1]}) + \diam(\gamma_i|_{[t,b_i]})\}. \\
\stackrel{\eqref{eq:bound-gambeg}}{\geq}& \frac{\delta_0}{4C''(1+2C'')} \min_{k=0,1} \diam(\gamma|_{U_k}).%\{{\rm diam}(\gamma|_{[0,t]}),{\rm diam}(\gamma|_{[t,1]})\}.
\end{align*}
In the ultimate inequality, we bound each of the terms in the minimum first, and then combine the bound. 
Now, the previous two estimates give for $t \in (a_i,b_i)$ that
\begin{equation}\label{eq:badtext}
d(\gamma(t), S \cup \Omega^c) \geq 
\frac{\delta_0}{4C''(1+2C'')}%\max\{L_A,C_A\}
%\max\{L_A, C_A\})} 
\min\Big\{
{\rm diam}(\gamma|_{[0,t]}), 
{\rm diam}(\gamma|_{[t,1]})
\Big\}.
\end{equation}

The estimates \eqref{eq:badtext} together with the diameter bound show that the curve $\gamma$ is $A'$--uniform for
$$
A' = \max\Big\{
\frac{4C''(1+2C'')%\max\{L_A,C_A\}
}%}\max\{L_A, C_A\})}
{\delta_0},
(1+2C'')%\max\{L_A, C_A\})
A_{\Omega}
\Big\}.%-\text{uniform}.
$$
%Thus, setting $\delta_1 = 1/A'$ we obtain the desired conclusion.
\end{proof}

\noindent \textbf{Acknowledgements:} {%\color{red} 
The first author was partly supported by the NSF grant DMS-1704215. He also thanks Mario Bonk for useful conversations on the topic. 

Both authors also thank Nageswari Shanmugalingam for comments on an earlier version of the manuscript. Discussions with her lead to splitting that paper into two parts, in order to enhance readability: this one on the general case allowing for $p>1$ and the technically completely different \cite{sylvesterjasun} discussing more specific results for $p=1$. The authors are also thankful for Qingshan Zhou and Panu Lahti, as well as the referees, for a careful reading and comments.}

\bibliographystyle{amsplain}

\vfill
%\pagebreak
\end{document}